\newcommand{\ensemblenombre}[1]{\mathbb{#1}}
\newcommand{\N}{\ensemblenombre{N}}
\newcommand{\R}{\ensemblenombre{R}}
\newcommand{\K}{\mathbb{K}}
\newcommand{\Ec}[1]{\mathbb{E} \left[#1\right]}
\newcommand{\Pp}[1]{\mathbb{P} \left(#1\right)}
\newcommand{\Ecsq}[2]{\mathbb{E} \left[#1\mathrel{}\middle|\mathrel{}#2\right]}
\newcommand{\Ppsq}[2]{\mathbb{P} \left(#1\mathrel{}\middle|\mathrel{}#2\right)}
\newcommand{\intervalle}[4]{\mathopen{#1}#2
                            \mathclose{}\mathpunct{},#3
                            \mathclose{#4}}
\newcommand{\intervalleff}[2]{\intervalle{[}{#1}{#2}{]}}
\newcommand{\intervalleof}[2]{\intervalle{(}{#1}{#2}{]}}
\newcommand{\intervallefo}[2]{\intervalle{[}{#1}{#2}{)}}
\newcommand{\intervalleoo}[2]{\intervalle{(}{#1}{#2}{)}}
\newcommand{\intervalleentier}[2]{\intervalle\llbracket{#1}{#2}
                               \rrbracket}
\newcommand{\petito}[1]{o\mathopen{}\left(#1\right)}
\newcommand{\petitoom}[1]{o^\omega\mathopen{}\left(#1\right)}
\newcommand{\petiton}{o_n\mathopen{}\left(1\right)}
\newcommand{\petitoe}{o_\epsilon\mathopen{}\left(1\right)}
\newcommand{\grandO}[1]{O\mathopen{}\left(#1\right)}
\newcommand{\tend}[2]{\underset{#1}{\overset{#2}{\longrightarrow}}}
\newcommand{\enstq}[2]{\left\lbrace#1\mathrel{}\middle|\mathrel{}#2\right\rbrace}
\newcommand{\ind}[1]{\mathbf{1}_{\left\lbrace #1 \right\rbrace}}  
\newcommand{\cT}{\mathcal{T}}
\newcommand{\cB}{\mathcal{B}}
\newcommand{\cL}{\mathcal{L}}
\newcommand{\cP}{\mathcal{P}}
\newcommand{\cH}{\mathcal{H}}
\newcommand{\cF}{\mathcal{F}}
\newcommand{\cE}{\mathcal{E}}
\newcommand{\bb}{\mathbf{b}}
\newcommand{\brho}{\boldsymbol\rho}
\newcommand{\bd}{\mathbf{d}}
\newcommand{\bnu}{\boldsymbol\nu}
\newcommand{\bB}{\mathsf{B}}
\newcommand{\bRho}{\rho}
\newcommand{\bD}{\mathsf{D}}
\newcommand{\bNu}{\nu}
\newcommand{\bM}{\mathsf{M}}
\newcommand{\parametre}{\gamma}
\newcommand{\vM}{\mathtt{M}}
\DeclareMathOperator{\haut}{ht}
\DeclareMathOperator{\diam}{diam}
\DeclareMathOperator{\dist}{d}
\DeclareMathOperator{\Vol}{Vol}
\DeclareMathOperator{\Ball}{B}
\DeclareMathOperator{\supp}{supp}
\newcommand{\abs}[1]{\left\lvert#1\right\rvert}
\newtheorem{theorem}{Theorem}
\newtheorem{proposition}[theorem]{Proposition}
\newtheorem{lemma}[theorem]{Lemma}
\newtheorem{definition-proposition}[theorem]{Definition-Proposition}
\newtheorem{remark}[theorem]{Remark}
\newenvironment{hypothesis}[1]
{\hypothesismx}
{\endhypothesismx}
\begin{document}
	\title{Random gluing of metric spaces}
	\author{Delphin Sénizergues}
	\maketitle
	\abstract{
	We construct random metric spaces by gluing together an infinite sequence of pointed metric spaces that we call \emph{blocks}. At each step, we glue the next block to the structure constructed so far by randomly choosing a point on the structure and then identifying it with the distinguished point of the block. The random object that we study is the completion of the structure that we obtain after an infinite number of steps. 
	In \cite{curien_random_2017}, Curien and Haas study the case of segments, where the sequence of lengths is deterministic and typically behaves like $n^{-\alpha}$. They proved that for $\alpha>0$, the resulting tree is compact and that the Hausdorff dimension of its set of leaves is $\alpha^{-1}$.
	The aim of this paper is to handle a much more general case in which the blocks are i.i.d.\ copies of the same random metric space, scaled by deterministic factors that we call $(\lambda_n)_{n\geq 1}$. We work under some conditions on the distribution of the blocks ensuring that their Hausdorff dimension is almost surely $d$, for some $d\geq0$. 
	We also introduce a sequence $(w_n)_{n\geq1}$ that we call the \emph{weights} of the blocks. At each step, the probability that the next block is glued onto any of the preceding blocks is proportional to its weight.
	The main contribution of this paper is the computation of the Hausdorff dimension of the set $\cL$ of points which appear during the completion procedure when the sequences $(\lambda_n)_{n\geq 1}$ and $(w_n)_{n\geq1}$ typically behave like a power of $n$, say $n^{-\alpha}$ for the scaling factors and $n^{-\beta}$ for the weights, with $\alpha>0$ and $\beta\in\R$. For a large domain of $\alpha$ and $\beta$ we have the same behaviour as the one observed in \cite{curien_random_2017}, which is that $\mathrm{dim_H}(\cL)=\alpha^{-1}$. However for $\beta>1$ and $\alpha<1/d$, our results reveal an interesting phenomenon: the dimension has a non-trivial dependence in $\alpha$, $\beta$ and $d$, namely \[\displaystyle\mathrm{dim_H}(\cL)=\frac{2\beta-1-2\sqrt{(\beta-1)(\beta-\alpha d)}}{\alpha}.\] The computation of the dimension in the latter case involves new tools, which are specific to our model.}
\begin{figure}
	\begin{center}
		\includegraphics[scale=0.20]{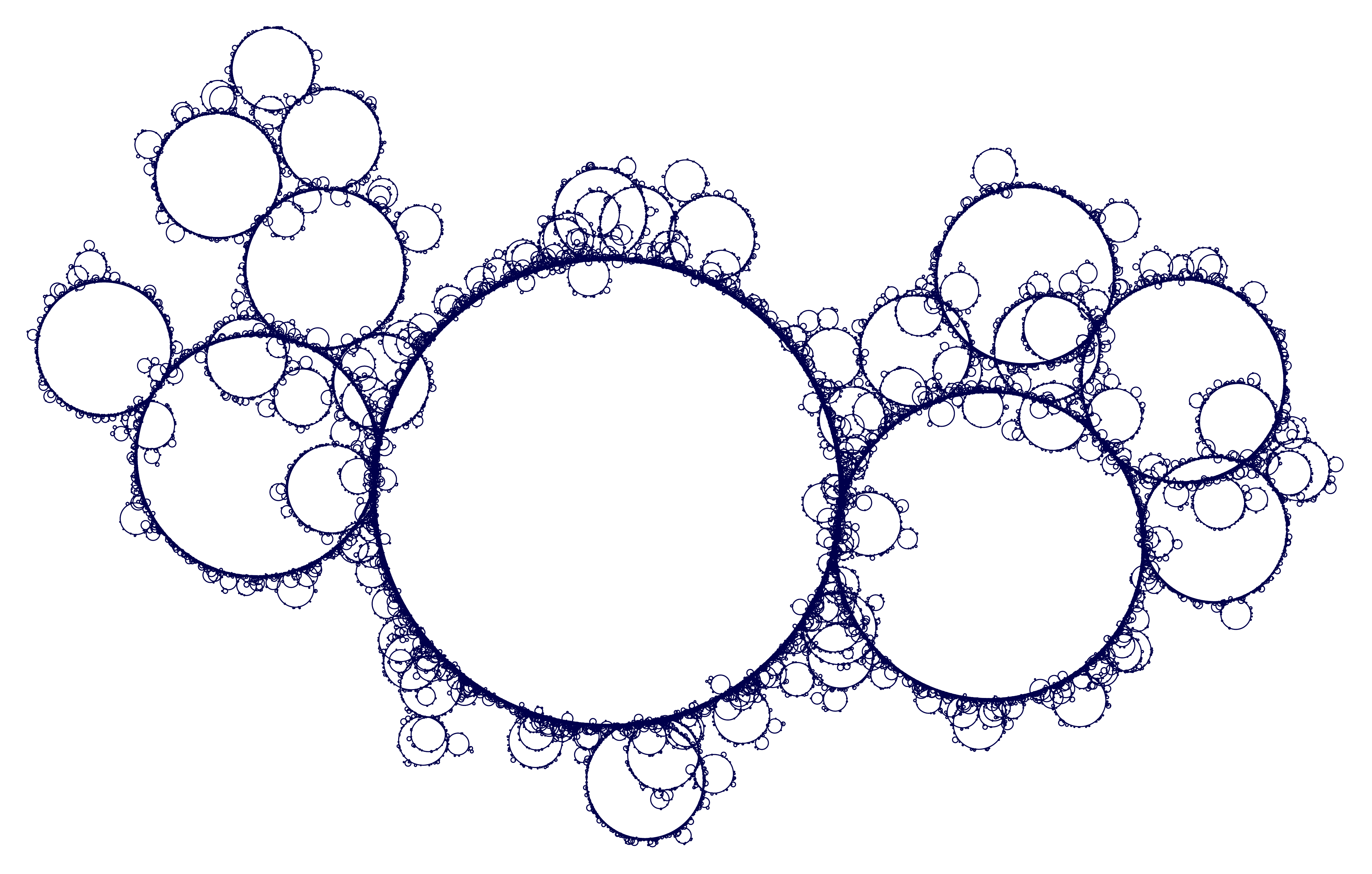}
		\caption{Gluing of circles of radii $\lambda_n=n^{-3/5}$, with weights $w_n=n^{-3/2}$. The Hausdorff dimension of the resulting metric space is $(\frac{10}{3}-\sqrt{5})$.}\label{gluing:cercles}
	\end{center}
\end{figure}

\section{Introduction}

Let us recall Aldous' famous line-breaking construction of the Brownian CRT (Continuum Random Tree) in \cite{aldous_continuum_1991}. On the half-line $\intervallefo{0}{\infty}$, consider $C_1,\ C_2, \dots ,C_n$ the points of a Poisson process with intensity $t\mathrm{d}t$. Cut the half-line in closed intervals $\intervalleff{C_i}{C_{i+1}}$, which we call \emph{branches} (of length $C_{i+1}-C_i$). Starting from $\intervalleff{0}{C_1}$, construct a tree by recursively gluing the branch $\intervalleff{C_i}{C_{i+1}}$ to a random point chosen uniformly on the tree already constructed (i.e.\ under the normalised length measure). Aldous' Brownian CRT is the completion of the tree constructed after an infinite number of steps. 
This process can be generalised by using any arbitrary sequence $(\lambda_n)$ for the length of the successive branches. This model was introduced and studied by Curien and Haas in \cite{curien_random_2017}, who proved that when $\lambda_n=n^{-\alpha+\petito{1}}$ for some $\alpha>0$, the tree obtained is a.s.\ compact and has Hausdorff dimension $(1\vee\alpha^{-1})$. In \cite{amini_explosion_2017}, Amini et.\ al.\ obtained a necessary and sufficient condition on the sequence $(\lambda_n)$ for the almost sure compactness of the resulting tree, under the assumption that this sequence is non-increasing. In \cite{haas_asymptotics_2017}, Haas describes how the height of the tree explodes when $n\rightarrow \infty$ under the assumption that $\lambda_n\approx n^\alpha$, with $\alpha\geq 0$.

Our goal is to define a more general version of this model, in which the branches are replaced by arbitrary (and possibly random) measured metric spaces, and to investigate the compactness and the Hausdorff dimension of the resulting metric space. As we will see, in this broader context, a striking phenomenon (absent from \cite{curien_random_2017}) pops up. 
In all the paper we will work with
\[(\lambda_n)_{n\geq1} \quad \text{and} \quad (w_n)_{n\geq1},\] two sequences of non-negative real numbers that will be the \emph{scaling factors} and \emph{weights} of the metric spaces that we glue. All the scaling factors $(\lambda_n)_{n\geq1}$ are considered strictly positive, but the weights, except for the first one $w_1$, can possibly be null. 

\paragraph{Definition of the model and main results} Let us first present a simpler version of our construction, in which we construct a tree through an aggregation of segments.
For now the branches, which we denote $(\bb_n)_{n\geq 1}$, are segments of length $(\lambda_n)_{n\geq 1}$, rooted at one end and endowed with the Lebesgue measure normalised so that their respective total measure is $(w_n)_{n\geq1}$ (or endowed with the null measure for branches with vanishing weight). We then define a sequence $(\cT_n)_{n\geq 1}$ of increasing trees by gluing those branches as follows. First, $\cT_1=\bb_1$. Then, if $\cT_n$ is constructed, we build $\cT_{n+1}$ by first sampling a point $X_n$ chosen proportionally to the measure $\mu_n$ obtained by aggregating the measures concentrated on the branches $\bb_1, \dots, \bb_n$ and then gluing $\bb_{n+1}$ onto $\cT_n$ by identifying its root with $X_n$. Let $\cT^*$ be the increasing union of the trees $\cT_n$ for $n\geq 1$ and $\cT$ be the completion of $\cT^*$. Note that if $(w_n)=(\lambda_n)$, this model coincides with the one studied in \cite{curien_random_2017}.

We can compute the Hausdorff dimension of the resulting tree in the case where $(\lambda_n)$ and $(w_n)$ behave like powers of $n$, say $\lambda_n= n^{-\alpha}$ and $w_n= n^{-\beta}$. We define $\cL:=(\cT\setminus \cT^*)$ to which we refer as the set of \emph{leaves} of $\cT$. In this particular case it coincides, up to a countable set, with the set of points $x$ such that $\cT\setminus\{x\}$ remains connected. In the above context a trivial consequence of our main theorem is that $\cT$ is a.s.\ compact and
\[ \begin{aligned}
\mathrm{dim_H}(\cL)&= \frac{2\beta-1-2\sqrt{(\beta-1)(\beta-\alpha)}}{\alpha} \quad \text{if }\beta>1 \text{ and } \alpha<1,\\
&= \frac{1}{\alpha} \quad\text{otherwise,}\\
\end{aligned}\]
where $\mathrm{dim_H}(X)$ stands for the Hausdorff dimension of the metric space $X$, see Section~\ref{gluing:subsec:hausdorff dimension}.

Note that, since we can check that the dimension of the \emph{skeleton} $\cT^*$ is always $1$, we can recover the dimension of $\cT$ as $\mathrm{dim_H}(\cT)=\max(1, \mathrm{dim_H}(\cL))$. 
We see that $\mathrm{dim_H}(\cL)=\frac{1}{\alpha}$ as in \cite{curien_random_2017} for most values of $\beta$, however, a new phenomenon, absent from \cite{curien_random_2017}, happens in the case $\beta>1$ (the sum of the weights is finite) and $\alpha<1$ (the total length is infinite). In this case, the Hausdorff dimension of $\cT$ depends in a non-trivial manner on $\alpha$ and $\beta$.

\bigskip

Now we want to generalise it to sequences $(\bb_n)$ of more general metric spaces that we call \emph{blocks}, which can be random and possibly more elaborate than just segments.
Specifically, our blocks are based on the distribution of a random pointed measured compact metric space, $(\bB,\bD,\bRho,\bNu)$, with underlying set $\bB$, distance $\bD$, distinguished point $\bRho$ and endowed with a probability measure $\bNu$. We sometimes denote it $\bB$ by abuse of notation when no confusion is possible and we refer to it as the \emph{underlying random block}. We consider a sequence $\left((\bB_n,\bD_n,\bRho_n,\bNu_n)\right)_{n\geq 1}$ of i.i.d.\ random variables with the distribution of $(\bB,\bD,\bRho,\bNu)$ and define our blocks by setting
\begin{equation}\label{gluing:block scaling}
\forall n\geq 1, \quad (\bb_n,\bd_n,\brho_n,\bnu_n):= (\bB_n,\lambda_n\cdot\bD_n,\bRho_n,w_n\cdot \bNu_n),
\end{equation}
meaning that we dilate all the distances in the space $\bB_n$ by the factor $\lambda_n$ and scale the measure by $w_n$. 
We suppose that, $\lambda_n\approx n^{-\alpha}$ for some $\alpha>0$, and $w_n\approx n^{-\beta}$ for some $\beta\in\R$, in some loose sense which we make precise in the sequel. For technical reasons, we have to separate the case $\beta<1$, the case $\beta>1$ and $\beta=1$. This gives rise to the three hypotheses Hyp.~\ref{gluing:condition beta petit}, Hyp.~ \ref{gluing:condition beta grand} and Hyp.~\ref{gluing:condition beta 1}. 
For any $d\in\intervallefo{0}{\infty}$, we will introduce the Hypothesis~\ref{gluing:hypothese d} and suppose that the distribution of our underlying random block $(\bB,\bD,\bRho,\bNu)$ satisfies this hypothesis for some $d\geq0$. This hypothesis ensures that our random block exhibits a $d$-dimensional behaviour. We set out all these hypotheses just below the statement of our theorem. 

\textbf{Except in Section~\ref{gluing:subsec:construction}, we will always assume that the blocks are of the form \eqref{gluing:block scaling}. This is implicit in all our results. }

In this extended setting, we can perform the same gluing algorithm and build a sequence $(\cT_n)_{n\geq1}$ of random compact metric spaces by iteratively gluing the root of $\bb_{n+1}$ onto a point chosen in $\cT_{n}$ according to the measure $\mu_n$ obtained as the sum of the measures of the blocks $\bb_1, \dots, \bb_n$. Again $\cT^*=\bigcup_{n\geq1}\cT_n$ is called the \emph{skeleton} of the construction and its completion is still denoted $\cT$. See Figure \ref{gluing:cercles} for non-isometric, non-proper representation in the plane of a simulation of this model, with $\bB$ chosen to be almost surely a circle of unit length. As for the case of segments, we refer to $\cL= \left(\cT\setminus \cT^*\right)$ as the set of leaves of the construction. We can now state our main theorem.

\begin{theorem}\label{gluing:theoreme principal}
	Suppose that there exists $d\geq0$, such that $(\bB,\bD,\bRho,\bNu)$ satisfies Hypothesis~\ref{gluing:hypothese d}, and $\alpha>0$ and $\beta\in\R$ such that the sequences $(w_n)$ and $(\lambda_n)$ satisfy either Hyp.~\ref{gluing:condition beta grand} or Hyp.~\ref{gluing:condition beta petit}, or Hyp.~\ref{gluing:condition beta 1}.
	Then, almost surely, the structure $\cT$ resulting from the construction is compact, and
	\[ \begin{aligned}
	\mathrm{dim_H}(\cL)&= \frac{2\beta-1-2\sqrt{(\beta-1)(\beta-\alpha d)}}{\alpha}, \quad \text{if }\beta>1 \text{ and } \alpha<\frac{1}{d},\\
	&= \frac{1}{\alpha} \quad\text{otherwise.}
	\end{aligned}\]
\end{theorem}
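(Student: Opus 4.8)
The plan is to prove the two bounds $\mathrm{dim_H}(\cL)\le s_0$ and $\mathrm{dim_H}(\cL)\ge s_0$ separately, where $s_0$ is the claimed value, after disposing of the ``easy'' regime. Throughout I would work with the genealogical tree $\mathbb T$ of the construction: its vertices are the blocks $\bb_1,\bb_2,\dots$ and the parent of $\bb_{n+1}$ is the block containing the gluing point $X_n$, so that, conditionally on the spaces $(\bB_n,\bD_n,\dots)$, this is a weighted recursive tree in which $\bb_{n+1}$ attaches to $\bb_k$ with probability $w_k/(w_1+\dots+w_n)$. To each block $\bb_n$ I associate its \emph{bush} $\cB_n\subseteq\cT$, the closure of the union of all blocks whose genealogical line passes through $\bb_n$; since $\cL$ is, up to a negligible set, the set of ends of $\mathbb T$, for every cutset (antichain of $\mathbb T$ meeting every infinite ray) $\mathcal S$ one has $\cL\subseteq\bigcup_{n\in\mathcal S}\cB_n$. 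I would first establish a priori control on the geometry: that $\cT$ is a.s.\ compact, and, more quantitatively, that $\diam(\cB_n)$ is a.s.\ of order $\lambda_n=n^{-\alpha}$ up to a subpolynomial factor, by bounding the ``reach'' above $\bb_n$ (the supremum of distances from $\bb_n$ to its descendant blocks) through a maximum, over the rays of $\mathbb T$ emanating from $\bb_n$, of sums $\sum_k\lambda_k\diam(\bB_k)$, using Hypothesis~\ref{gluing:hypothese d} to control the $\diam(\bB_k)$ and the hypotheses on $(\lambda_n),(w_n)$ to control the genealogy.

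\textbf{The easy regime ($\beta\le1$ or $\alpha\ge1/d$).} Here I would obtain $\mathrm{dim_H}(\cL)\le1/\alpha$ by covering $\cL$ along the cutset formed by the first block on each ray whose bush has diameter $<\epsilon$, and summing $\sum_n\diam(\cB_n)^s\lesssim\sum_n n^{-\alpha s}<\infty$ for $s>1/\alpha$; the point is that summability of $\sum\lambda_n^d$ (when $\alpha\ge1/d$), resp.\ the fact that the gluing points spread out evenly when $\beta\le1$, ensures the bushes do not overlap too much, so this covering is efficient. The matching lower bound $\mathrm{dim_H}(\cL)\ge1/\alpha$ follows by building the natural ``uniform leaf'' measure and checking a Frostman estimate via the mass distribution principle, in the spirit of \cite{curien_random_2017}.

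\textbf{The hard regime ($\beta>1$, $\alpha<1/d$): upper bound.} Now $\sum w_n<\infty$, so new blocks attach preferentially to the early blocks; consequently a typical block $\bb_n$ acquires its first child only around index $\approx n^{\beta}$, and — crucially — each big block carries, near every one of its points, a cloud of small bushes: within a ball of radius $r$ centred on $\bb_n$ one finds of order $n^{\alpha d-\beta}\,r^{\,d-1/\alpha}$ bushes of diameter $\asymp r$, a quantity which tends to infinity as $r\to0$ exactly because $\alpha d<1$. This overlap is what makes the box dimension ($=1/\alpha$) strictly exceed the Hausdorff dimension, and the covering must exploit it. Recursively, to cover the leaves above $\bb_n$ one chooses a scale $r_n<\lambda_n$, covers $\bb_n$ by $\asymp(\lambda_n/r_n)^d$ balls of radius $\asymp r_n$ — which simultaneously swallow all bushes above the ``small'' children of $\bb_n$ (index $>r_n^{-1/\alpha}$) — and recurses on the finitely many ``big'' children. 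Over the recursion set $\mathcal R$ this has cost $\sum_{n\in\mathcal R}\lambda_n^d\,r_n^{\,s-d}$, which one must optimise; tracking its typical size along the doubly-exponentially growing spine of $\mathbb T$ leads to a fixed-point/variational problem whose critical value is precisely $s_0$, equivalently $(\alpha s_0)^2-(4\beta-2)(\alpha s_0)+4\alpha d(\beta-1)+1=0$. Showing that for $s>s_0$ this cost can be driven to $0$ a.s.\ is where the model-specific tools are needed: sharp large-deviation estimates for the weighted recursive tree $\mathbb T$ — its profile of generations versus indices, the law of the index of the first children of a block, the number of descendants of $\bb_n$ of a prescribed index — are required to control the fluctuations.

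\textbf{Hard regime: lower bound, and the main obstacle.} For the reverse inequality I would construct a random probability measure $\mathfrak m$ carried by a ``good'' subset of $\cL$ — leaves whose spine in $\mathbb T$ grows at the rate dictated by the optimal covering above, and around which the bush cloud behaves typically — by prescribing, generation by generation, a tilted rule for selecting the next spine block (roughly: among the children of the current block, favour those whose index lies in the optimal window, spreading within it proportionally to a power of the index fixed by $s_0$). One then shows, by concentration along a geometric sequence of scales together with Borel--Cantelli, that $\mathfrak m(\Ball(x,r))\le r^{\,s_0-\delta}$ for $\mathfrak m$-a.e.\ $x$ and every $\delta>0$, and concludes with the mass distribution principle and $\delta\downarrow0$. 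The main difficulty, in both directions, is the absence of exact self-similarity: the subtree of $\mathbb T$ above $\bb_n$ is only an approximate rescaled copy of the whole, and the genealogy expands doubly-exponentially fast, so standard results on random recursive fractals do not apply and everything must be controlled through quantitative estimates on $\mathbb T$; the delicate point is to make the upper and lower bounds meet at the non-trivial exponent $s_0$.
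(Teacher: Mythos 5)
Your overall architecture --- compactness plus the generic $1/\alpha$ covering, a recursive multi-scale covering in the regime $\beta>1$, $\alpha<1/d$ (cover $\bb_n$ at a finer scale so as to swallow the bushes of late children, recurse on the early children, optimise the scale), and Frostman-type lower bounds from measures concentrated near optimal index windows along a doubly exponential sequence --- is essentially the strategy of the paper, and the quadratic you write, $(\alpha s)^2-(4\beta-2)\alpha s+4\alpha d(\beta-1)+1=0$, is indeed the equation satisfied by the claimed exponent. (Minor points: the $1/\alpha$ upper bound needs no non-overlap or efficiency argument, a crude sum $\sum_n n^{-\alpha s}$ over balls covering each $\cT(\bb_n)$ suffices; and in the hard-regime lower bound the paper does not tilt by a power of the index but simply takes the normalised mass measure on well-behaved half-blocks with indices in the windows, controlling it by a fragment decomposition and Chernoff-type mass estimates.)

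The genuine gap is in your ``easy regime'' lower bound. You group $\beta>1,\ \alpha\ge 1/d$ together with $\beta\le 1$ and propose to prove $\mathrm{dim_H}(\cL)\ge 1/\alpha$ there using ``the natural uniform leaf measure''. But when $\beta>1$ one has $W_\infty=\sum_n w_n<\infty$, so the limit $\bar{\mu}$ of the normalised weight measures equals $\mu_\infty/W_\infty$ and is carried by the skeleton $\cT^*$: it gives zero mass to $\cL$ (Proposition~\ref{gluing:convergence of the measure mu}). Hence no Frostman estimate for $\bar{\mu}$ can yield any lower bound on $\mathrm{dim_H}(\cL)$ in that sub-case; the natural-measure argument is available only under Hyp.~\ref{gluing:condition beta petit} or Hyp.~\ref{gluing:condition beta 1}. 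For $\beta>1$ and $\alpha>1/d$ you must run the same kind of artificial construction you reserve for the hard regime: in the paper this is done inside Proposition~\ref{gluing:prop:minoration dimension hausdorff}, using the measures $\pi_k$ supported on half-blocks with indices in $\intervalleentier{n_k}{2n_k}$, $n_{k+1}=\lceil n_k^{\parametre}\rceil$, with fragment scale exponent $\eta<1/d$ taken close to $1/d$ and $\parametre\to\infty$, which yields the bound $1/\alpha$; the boundary case $\alpha=1/d$ is then recovered by the monotonicity argument of Section~\ref{gluing:monotonicity of Hausdorff dimension}. As written, your proposal has no mechanism producing a measure that charges $\cL$ in this range of parameters, so the ``$=1/\alpha$ otherwise'' part of the theorem is not fully proved.
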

\begin{figure}
	\centering
	\includegraphics[height=5cm]{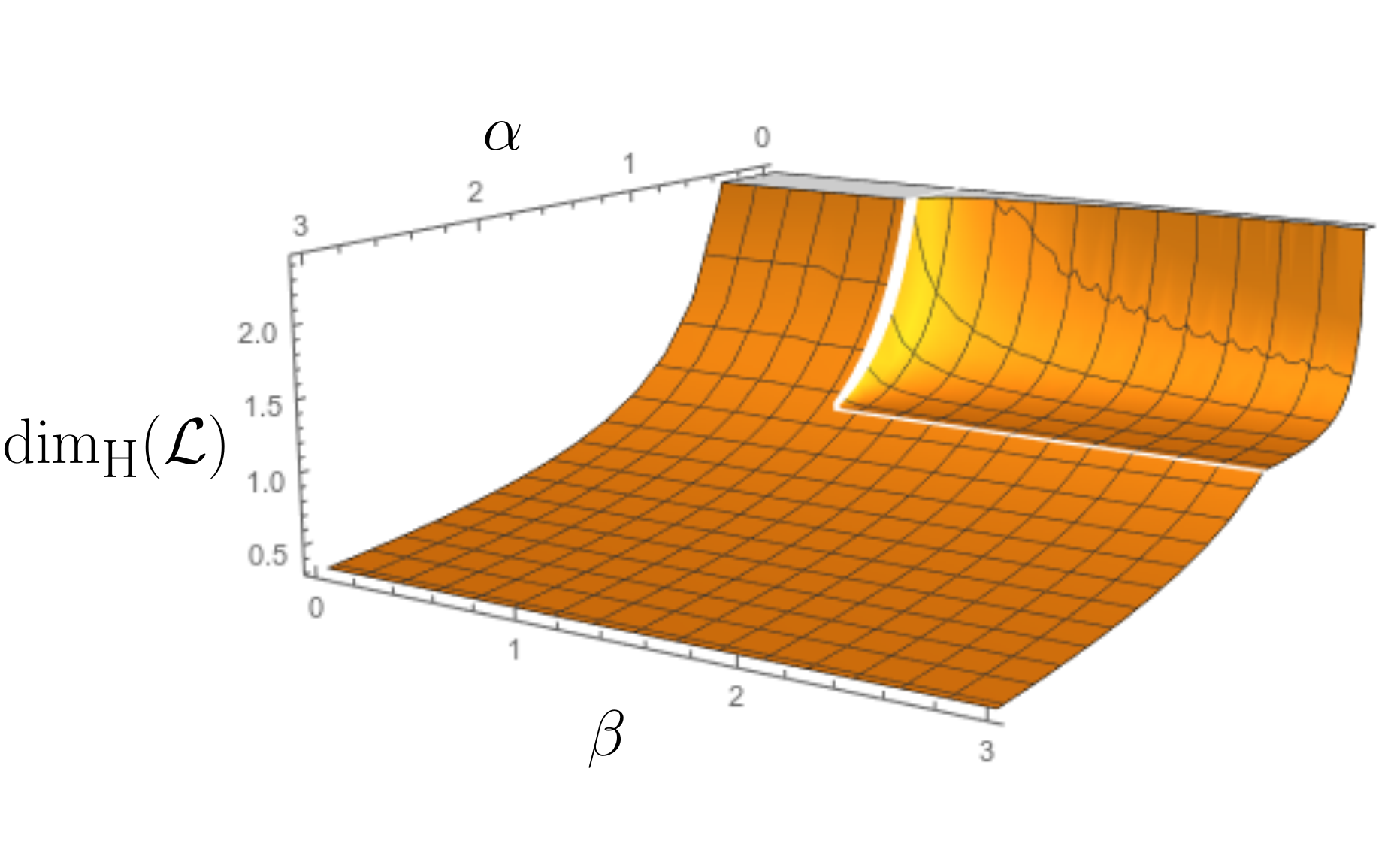}
	\caption{The plot represents the Hausdorff dimension of the leaves as a function of $\alpha$ and $\beta$, the dimension $d$ being fixed to $1$. The expression obtained for $\beta>1$ and $\alpha<\frac{1}{d}$ can be rewritten as $\displaystyle d+\frac{(\sqrt{\beta-\alpha d}-\sqrt{\beta-1})^2}{\alpha}$
		This expression is always larger than $d$ and smaller than $\frac{1}{\alpha}$,  and it is decreasing in $\alpha$ and $\beta$ on the domain on which we consider it. When $\beta\rightarrow 1$, it converges to the value $\frac{1}{\alpha}$ so that the function $(\alpha,\beta)\mapsto \dim_H(\cL)$ is continuous on the domain $\R^*_+\times \R$.
	}
\end{figure}

Remark that for $\beta>1$ the dimension of the set $\cL$ depends on the geometry of the underlying random block through $d$, its dimension. For $\beta\leq 1$, it is not the case, and actually the theorem remains true under much weaker hypotheses for the distribution of $(\bB,\bD,\bRho,\bNu)$, namely that $\bNu$ is not almost surely concentrated on $\{\bRho\}$, and that $\forall k\geq 0,\Ec{(\diam(\bB))^k}<\infty$, where $\diam(\cdot)$ denotes the diameter of a metric space. We could even replace the assumption that the blocks $\left((\bB_n,\bD_n,\bRho_n,\bNu_n)\right)_{n\geq 1}$ are i.i.d.\ by some weaker assumption but we do not do it for the sake of clarity. The proofs when $\beta\leq1$ are quite short and the interested reader can easily generalise them to a more general setting.  

\paragraph{Hypotheses of the theorem}Let us define and discuss the precise hypotheses of our theorem. First, let us describe the assumptions that we make on the sequences $(\lambda_n)$ and $(w_n)$.
We define \[ W_n=\sum_{k=1}^n w_k,\] and for all $\epsilon>0$, we set
\begin{equation}\label{gluing:def G epsilon} G^\epsilon:= \enstq{k\geq 1}{ w_k\geq k^{-\beta -\epsilon}, \ \lambda_k\geq k^{-\alpha-\epsilon}},
\end{equation}
and also
\begin{equation}\label{gluing:def G epsilon n} G_n^\epsilon:= \enstq{k\in\intervalleentier{n}{2n}}{w_k\geq n^{-\beta -\epsilon}, \ \lambda_k\geq n^{-\alpha-\epsilon}}.
\end{equation}
As said earlier, we separate the case $\beta<1$, the case $\beta>1$ and the case $\beta=1$.  
\begin{hypothesis}{$\bigcirc_{\alpha,\beta}$}\label{gluing:condition beta petit} We have $\alpha>0$ and $\beta<1$ and for all $n\geq1,\ \lambda_n\leq n^{-\alpha+\petito{1}}$ and $w_n\leq n^{-\beta+\petito{1}}$. Furthermore $W_n=n^{1-\beta+\petito{1}}$ and for all $\epsilon>0$,
	\[\liminf_{n\rightarrow\infty}\frac{\sum_{k=1}^{n}w_k \ind{k\in G^\epsilon}}{\sum_{k=1}^{n}w_k}>0.\]
\end{hypothesis}
The last display ensures\textbf{} that for all $\epsilon>0$, the set $G^\epsilon$ contains asymptotically a positive proportion of the total weight.

\begin{hypothesis}{$\diamond_{\alpha,\beta}$}\label{gluing:condition beta grand} We have $\alpha>0$ and $\beta>1$ and for all $n\geq1,\ \lambda_n\leq n^{-\alpha+\petito{1}}$ and $w_n\leq n^{-\beta+\petito{1}}$. Furthermore, for all $\epsilon>0$,
	\[\#G_n^\epsilon\underset{n\rightarrow\infty}{=} n^{1+\petito{1}}.\]
\end{hypothesis}
Under the stronger assumption $\lambda_n= n^{-\alpha+\petito{1}}$ and $w_n= n^{-\beta+\petito{1}}$, Hypothesis~\ref{gluing:condition beta grand} holds if $\beta> 1$ (resp.\ Hypothesis~\ref{gluing:condition beta petit}, if $\beta<1$). The case $\beta=1$ is slightly different and in this case we set
\begin{hypothesis}{$\square_{\alpha,1}$}\label{gluing:condition beta 1} We have $\alpha>0$ and $\beta=1$ and for all $n\geq1,\ \lambda_n\leq n^{-\alpha+\petito{1}}$ and $w_n\leq n^{-1+\petito{1}}$. Furthermore, for all $\epsilon>0$,
	\[\frac{1}{\log\log\log N} \sum_{k=N}^{N^{1+\epsilon}}\frac{w_k}{W_k} \ind{k\in G^\epsilon}\underset{N\rightarrow \infty}{\longrightarrow}+\infty.\]
\end{hypothesis}
Note that this last hypothesis requires in particular that $W_n\rightarrow\infty$ as $n\rightarrow\infty$. Now let us define Hypothesis~\ref{gluing:hypothese d}, for any $d\geq 0$, which will ensure that our random underlying block has the appropriate $d$-dimensional behaviour.
\begin{hypothesis}{$H_d$}\label{gluing:hypothese d}
	The law of the block $(\bB,\bD,\bRho,\bNu)$ satisfies the following conditions:
	\begin{enumerate}
		\item \label{gluing:dimension d}$\bullet$If $d=0$, the block $\bB$ is a finite metric space which is not a.s.\ reduced to a single point and such that the measure $\bNu$ satisfies $\bNu(\{x\})>0$, for all points $x\in\bB$.
		
		$\bullet$If $d> 0$, there exists an increasing function $\varphi:\intervalleff{0}{1}\rightarrow\intervalleff{0}{d/2}$, satisfying $\displaystyle\lim_{r\rightarrow 0}\varphi(r)=0$, such that almost surely, there exists a (random) $r_0\in\intervalleoo{0}{1}$ such that
		\begin{equation} \forall r\in\intervallefo{0}{r_0}, \ \forall x\in\bB,\  \quad r^{d+\varphi(r)}\leq \bNu \left( \Ball(x,r)\right)\leq r^{d-\varphi(r)}.
		\tag{$\star_{r_0}$} \label{gluing:controle boules}
		\end{equation}
		\item \label{gluing:nombre mininimal de boules} Let $N_r(\bB)$ be the minimal number of balls of radius $r$ needed to cover $\bB$. Then\[\Ec{N_r(\bB)}\leq r^{-d+\petito{1}}\quad \text{as} \quad r\rightarrow 0.\]
		\item \label{gluing:moment exponentiel 1}For all $k\geq 0$, we have  $\Ec{\diam(\bB)^k}<\infty$.
	\end{enumerate}
\end{hypothesis}
Here $\Ball(x,r)$ is the open ball centred at $x$ with radius $r$ and the notation $\diam(\bB)$ denotes the diameter of $\bB$, defined as the maximal distance between two points of $\bB$. The conditions \ref{gluing:dimension d} and \ref{gluing:nombre mininimal de boules} ensure that the blocks that we glue together have dimension $d$. The condition \ref{gluing:moment exponentiel 1} ensures that the blocks cannot be too big. In the paper, some results are stated under some weaker assumptions on the distribution of random block $(\bB,\bD,\bRho,\bNu)$ and they are hence all still valid under Hypothesis~\ref{gluing:hypothese d}. 

\paragraph{Motivations}
The assumptions of Theorem~\ref{gluing:theoreme principal} are rather general and various known models fall into our setting. First, let us cite two constructions that were already covered by the work presented in \cite{curien_random_2017}. Of course we have Aldous' line-breaking construction of the CRT but let us also cite the work of Ross and Wen in \cite{ross_scaling_2018}, 
in which the authors study a discrete model of growing trees and prove that its scaling limit can be described as a line-breaking procedure à la Aldous using a Poisson process of intensity $t^l\mathrm{d}t$, with $l$ an integer. The Hausdorff dimension of the resulting tree is then $(l+1)/l$.
Our extended setting now also includes the \emph{Brownian looptree}, defined in \cite{curien_scaling_2015}, which appears as the scaling limit of the so-called \emph{discrete looptree} associated with Barab\'asi-Albert model. This random metric space also has a natural construction through an aggregation of circles, and our theorem proves that this object has almost surely Hausdorff dimension $2$.
These examples do not really use our theorem in its full generality since their underlying block is deterministic. In fact, Hypothesis~\ref{gluing:hypothese d} is very general and is satisfied (for the appropriate $d\geq0$) by many distributions of blocks, including the Brownian CRT ($d=2$), see \cite{duquesne_exceptionally_2014}, the Brownian map ($d=4$), see \cite{serlet_large_1997,legall_geodesics_2010}, the $\theta$-stable trees ($d=\frac{\theta+1}{\theta}$), see \cite{duquesne_decomposition_2017}. Hence, our results can apply to a whole variety of such constructions, with a very general distributions for the blocks, and we are currently working on some examples in which this construction naturally arises as the limit of discrete models.

\paragraph{Indications on the proofs}
The computations of the dimension in Theorem~\ref{gluing:theoreme principal} differ, depending on the assumptions we make on $\alpha$ and $\beta$, and always consist of an upper bound, that we derive by providing explicit coverings, and a lower bound that arises from the construction of a probability measure satisfying the assumptions of Frostman's lemma, see Lemma~\ref{gluing:frostman lemma} in the Appendix for a statement. 

If we just assume that the scaling factors are smaller than $n^{-\alpha+\petito{1}}$, we can prove that the dimension is bounded above by $\frac{1}{\alpha}$ for rather general behaviours of the weights. To do so, we adapt arguments from \cite{curien_random_2017} to our new setting. The essential idea behind the proof is that the sub-structure descending from a block $\bb_n$ has size $n^{-\alpha+\petito{1}}$, and so that one only needs to cover every block $\bb_n$ with a ball of radius $n^{-\alpha+\petito{1}}$ to cover the whole structure.

When $\alpha<\frac{1}{d}$ and $\beta>1$, although the sub-structure descending from a block $\bb_n$ may have diameter of order $n^{-\alpha+\petito{1}}$, we can also check that the index of the first block glued on block $n$ has index roughly $n^{\beta}$, which is large compared to $n$. Hence the diameter of the substructure descending from $\bb_n$ is essentially due to $\bb_n$ itself. This gives us a hint that we can cover the whole substructure descending from the block $\bb_n$, using a covering of $\bb_n$ with balls that are really small compared to the size of $\bb_n$, and that it would lead to a more optimal covering. In fact we use these two observations to recursively construct a sequence of finer and finer coverings, which lead to the optimal upper-bound. The idea of the proof is presented in more details in Section~\ref{gluing:subsubsec:idea of the proof, upperbound}.

Concerning the lower bounds, for all values of $\alpha$ and $\beta$, we can define a natural probability measure $\bar{\mu}$ on $\cT$ as the limit of (a normalised version of) the measure $\mu_n$ defined on $\cT_n$ for every $n\geq 1$, see Section~\ref{gluing:height of a uniform point}. In the case $\beta\leq1$, this probability measure only charges the leaves of $\cT$, and an application of Lemma~\ref{gluing:frostman lemma} gives the lower bound $\frac{1}{\alpha}$.

For $\beta>1$, the measure $\bar{\mu}$ does not charge the leaves and so the preceding argument does not work. We construct another measure as the sub-sequential limit of a sequence of measures $(\pi_k)$ which are concentrated on sets of the form $\left(\cT_{2n_k}\setminus\cT_{n_k}\right)$ with $(n_k)$ chosen appropriately, see Section~\ref{gluing:subsubsec:idea of the proof} for a presentation of the idea of the proof. The limiting measure is then concentrated on a strict subset of leaves and again, using Lemma~\ref{gluing:frostman lemma} yields the appropriate lower bound.

\paragraph{Related constructions}
Let us also cite some other models that have been studied in the literature and which share some features with ours. First, the line-breaking construction of the scaling limit of critical random graphs in \cite{addario-berry_critical_2010} by Addario-Berry, Broutin and Goldschmidt, that of the stable trees in \cite{goldschmidt_line_2015} by Goldschmidt and Haas, and that of the stable graphs in \cite{goldschmidt_stable_2018} by Goldschmidt, Haas and Sénizergues, use a gluing procedure that is identical to ours. Their constructions are not directly handled by Theorem~\ref{gluing:theoreme principal} but they fall in a slightly more general setting, for which our proofs still hold. 
In \cite{borovkov_asymptotic_2006}, Borovkov and Vatutin study a discrete tree constructed recursively, which corresponds to the "genealogical tree" of the blocks in our model. 
Last, in \cite{rembart_recursive_2018}, Rembart and Winkel study the distribution of random trees that satisfy a self-similarity condition (in law). They provide an iterative construction of those trees in which infinitely many branches are glued at each step. 

\paragraph{Plan of the paper}
In Section~\ref{gluing:sec:general framework}, we give a rigorous definition of our model, set up some useful notation, and discuss some general properties. In the second section, we study the (normalised) natural measure $\bar{\mu}_n$ on $\cT_n$ and prove that it converges to a measure $\bar{\mu}$ on $\cT$ under suitable assumptions. In Section~\ref{gluing:subsec:upper bounds}, we prove the almost sure compactness of $\cT$ and some upper-bounds on its Hausdorff dimension under some relatively weak hypotheses. In Section~\ref{gluing:subsec:upper bound difficile}, we develop a new (more involved) approach that allows us to obtain a better upper-bound for some parameters for which the former fails to be optimal. In Section~\ref{gluing:sec: lower bounds}, we prove the lower bounds that match the upper-bounds obtained in Section~\ref{gluing:sec:upper bounds}. It is again divided in two subsections, each providing a proof that is only valid for some choices of parameters $\alpha$ and $\beta$. The Appendix \ref{gluing:subsec:hausdorff dimension} contains a short reminder of basic properties concerning Hausdorff dimension. The Appendices \ref{gluing:app:embedded construction}, \ref{gluing:subsec:decomposition in fragments} and \ref{gluing:subsec:computations} contain some technical proofs that can be skipped at first reading.
\paragraph{Acknowledgements}The authors would like to thank the anonymous referees for their valuable comments which helped to improve the presentation of the manuscript.
\section{General framework}\label{gluing:sec:general framework}
In this section, we start by providing a precise definition of our model and then we investigate some of its general properties.
\subsection{Construction}\label{gluing:subsec:construction}

Consider $((\bb_n,\bd_n,\brho_n,\bnu_n))_{n\geq1}$ a sequence of compact pointed metric spaces endowed with a finite Borel measure. Recall from the introduction the heuristics of our recursive construction. We define $\cT_1$ as the first block $\bb_1$ endowed with its measure $\bnu_1$. Then at each step, we construct $\cT_{n+1}$ from $\cT_n$ by gluing the root of the block $\bb_{n+1}$ to a random point $X_n\in\cT_n$, which has distribution (a normalised version of) $\mu_n$. The measure $\mu_{n+1}$ is defined as the sum of the measures $\mu_n$ and $\bnu_{n+1}$, the measure supported by $\bb_{n+1}$. We define $\cT^*$ as the increasing union of all the $\cT_n$ for $n\geq 1$, and its completion is denoted $\cT$.
In the next paragraph, we describe formally how to construct such growing metric spaces as subsets of a larger ambient space.
The definitions here are rather technical and the proofs in the paper do not use the details of the construction, so the reader can skip this part at first reading.
\paragraph{Embedded construction} We consider $(U,\delta)$ the Urysohn space, and fix a point $u_0\in U$. The space $U$ is defined as the only Polish metric space (up to isometry) which has the following extension property (see \cite{husek_urysohn_2008} for constructions and basic properties of $U$): given any finite metric space $X$, and any point $x\in X$, any isometry from $X\setminus\{x\}$ to $U$ can be extended to an isometry from $X$ to $U$.
In the rest of the construction, we assume that the measured metric spaces $((\bb_n,\bd_n,\brho_n,\bnu_n))_{n\geq1}$ are all embedded in the space $U$ and that their root is identified to $u_0$. From the properties of the Urysohn space, this is always possible (see Appendix \ref{gluing:app:embedded construction} for a construction in the case of random blocks).

We introduce
\[\ell^1(U,u_0):=\enstq{(x_n)_{n\geq1}\in U^{\N^*}}{\sum_{n=1}^\infty \delta(x_n,u_0)<+\infty }.\]
If we endow $\ell^1(U,u_0)$ with the distance $\dist((x_n)_{n\geq1},(y_n)_{n\geq1})=\sum_{n=1}^\infty \delta(x_n,y_n)$, it is an easy exercise to see that it makes this space Polish. We can now construct the $\cT_n$ recursively, by $\cT_1=\enstq{(x,u_0,u_0,\dots)}{x\in\bb_1}$, and identifying $\cT_1$ to the block $\bb_1$, we set $\mu_1=\bnu_1$. 
For $n\geq 1$, the point $X_n$ is sampled according to $\bar{\mu}_n$ a normalised version of $\mu_n$  with total mass $1$. The point $X_n$ is of the form $\left(x_1^{(n)},x_2^{(n)}, \dots, x_n^{(n)}, u_0, \dots\right)$ and we set
\[\cT_{n+1}:=\cT_n\cup \enstq{\left(x_1^{(n)},x_2^{(n)}, \dots x_n^{(n)},x, u_0 \dots\right)}{x\in \bb_{n+1}}.\]
We set $\mu_{n+1}:=\mu_n+\bnu_n$, where as in the preceding section, we see $\bb_{n+1}$ as the corresponding subset of $\cT_{n+1}$. 
Then $\cT^*=\bigcup_{n\geq 1} \cT_n$ and $\cT=\overline{(\cT^*)}$ is its closure in the space $(\ell^1(U,u_0),\dist)$. At the end $\cT$ is a random closed subset of a Polish space.

In the rest of the paper, we will not refer to this formal construction of $\cT$ and we will identify $\bb_n$ with the corresponding subset in $\cT$. We recall the notation $W_n=\sum_{k=1}^n w_n$ for the total mass of the measure $\mu_n$.

\subsection{Some notation}\label{gluing:notation}
\begin{figure}
	\begin{center}
		\includegraphics[height=6cm]{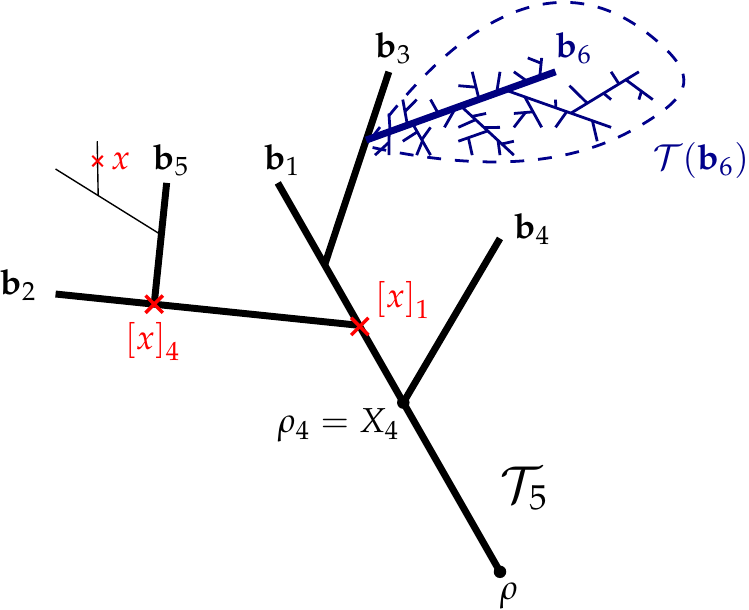}
		\caption{Substructure descending from a set, and projection on a substructure, illustrating some notation introduced in the paper in the case of the gluing of segments.}\label{gluing:fig:notation}
	\end{center}
\end{figure}
Let us introduce some notation that will be useful in the sequel, some of which is illustrated in Figure \ref{gluing:fig:notation}. Recall that from now on, we always assume that the blocks are of the form \eqref{gluing:block scaling}.

$\bullet$ If $(E,\dist,\rho)$ is a pointed metric space, and $x\in E$, we define $\haut(x)$, the height of $x$, as its distance to the root $\dist(\rho,x)$. We also denote $\haut(E)=\sup_{x\in E}\haut(x)$, the height of $E$. Let us consider $(\bB,\bD,\bRho,\bNu)$, a random block of our model before scaling, and $X$ a point of $\bB$ which conditionally on $(\bB,\bD,\bRho,\bNu)$, has distribution $\bNu$. We denote
\begin{equation}\label{gluing:hauteur point unif}\cH:=\haut(X)=\bD(\bRho,X),
\end{equation}
the height of a uniform random point in the block. Remark that Hypothesis~\ref{gluing:hypothese d} implies that $\Ec{\cH^2}<\infty$, and that $\Pp{\cH>0}>0$. Some of our results are stated under these weaker assumptions. 

$\bullet$ Whenever we sample the point $X_n$ under $\bar{\mu}_n$, we do it in the following way: first we sample $K_n$ such that for all $1\leq k \leq n$, $\Pp{K_n=k}=\frac{w_k}{W_n}$ and then, conditionally on $K_n=k$, the point $X_n$ is chosen on the block $\bb_k$ using the normalised version of the measure $\bnu_k$. Whenever $K_n=k$, we say that $\bb_{n+1}$ is grafted onto $\bb_k$ and write $\bb_{n+1}\rightarrow\bb_k$. Remark that this entails that $X_n\in \bb_k$, but this condition is not sufficient in the case where $X_n$ belongs to several blocks (which only happens if the measures carried by the blocks have atoms). We denote
\begin{align}\label{gluing:eq:def of mu_n*}
\bar{\mu}_n^*:=\text{law of } (K_n,X_n), 
\end{align}
seen as a measure on $\bigsqcup_{k=1}^n\{k\}\times \bb_k$. In this way, the random variables $((K_n,X_n))_{n\geq 1}$ are independent with respective distributions $(\bar{\mu}_n^*)_{n\geq 1}$.
We remain loose on the fact that we sometimes consider the blocks as abstract metric spaces and at other times we see them as subsets of $\cT$. 
It is implicit in the preceding discussion that everything is expressed conditionally on the sequence of blocks $(\bb_n)_{n\geq 1}$.

$\bullet$ We simultaneously construct a sequence of increasing discrete trees $(\mathsf{T}_n)_{n\geq 1}$ by saying that for $n\geq 1$, the tree $\mathsf{T_n}$ has $n$ nodes labelled $1$ to $n$ and $i$ is a child of $j$ if and only if $\bb_i\rightarrow\bb_j$. Also define $\mathsf{T}$ their increasing union. We denote $\prec$ the genealogical order on $\N^*$ induced by this tree. We denote $\dist_\mathsf{T}(i,j)$ for the graph distance between the nodes with label $i$ and $j$ in this tree and $\haut_\mathsf{T}(\cdot)$ for their height.

$\bullet$ For $x\in\cT$, we define $[x]_n$, the projection of $x$ on $\cT_n$, as the unique point $y$ of $\cT_n$ that minimizes the distance $\dist(x,y)$.  

$\bullet$ Similarly, for $k\geq 1$, we define $[k]_n$, the projection of $k$ on $\mathsf{T}_n$, as the unique node $i\leq n$ that minimizes the distance $\dist_\mathsf{T}(i,k)$. 

$\bullet$ If $S$ is a subset of a block $\bb_n$ for some $n\geq 1$ then we define $\cT(S)$, the substructure descending from $S$ as
\[\cT(S):=\overline{S\cup \underset{[X_{i-1}]_n\in S}{\bigcup_{i\succ n}}\bb_i}.\]
If $S=\bb_n$, this reduces to
\[\cT(\bb_n)=\overline{\bigcup_{i\succeq n}\bb_i},\]
and we consider $(\cT(\bb_n),\dist,\brho_n)$ as a rooted metric space.

$\bullet$ Remark that if $x\in\cT(\bb_k)$ for some $k\geq 1$ then we have $[x]_k\in\bb_k$ and more generally, for any $n\leq k$, we have $[x]_n\in \bb_{[k]_n}$. 

$\bullet$ We often use the little-o notation and denote $\petito{1}$ a deterministic function that tends to $0$ when some parameter tends to $0$ or $\infty$, depending on the context. For such functions that are random, we write instead $\petitoom{1}$.
\subsection{Zero-One law for compactness, boundedness and Hausdorff dimension}\label{gluing:properties of interest}

The main properties of $\cT$ and $\cL$ that we study are compactness and Hausdorff dimension. One can check that some of these properties are constants almost surely by an argument using Kolmogorov's zero-one law. 

Indeed, take the whole construction $\cT$ and contract the compact subspace $\cT_n$ into a single point. We can easily check that the resulting space is compact (resp.\ bounded) iff the former is compact (resp.\ bounded). Also, the subset $\cL$ and its image after the contraction of $\cT_n$ have the same Hausdorff dimension.
Now remark that the space that we just described only depends on the randomness of the blocks and the gluings after $n$ steps. Indeed, if we start at time $n$ with a unique point with weight $W_n$ and then follow the procedure by gluing recursively $\bb_{n+1}, \ \bb_{n+2}, \dots $, we get exactly the same space.

Hence, as this is true for all $n$, these properties only depend on the tail $\sigma$-algebra generated by the blocks and the gluings, and are therefore satisfied with probability $0$ or $1$.

\begin{remark}
	In the setting of \cite{curien_random_2017}, where the blocks are segments and the weights correspond to the lengths of those segments, the authors proved that the event of boundedness and compactness for $\cT$ coincide almost surely. This is not the case in our more general setting: consider the case of branches with weights and lengths defined as
	\begin{align*}w_n&=2^n, \ \lambda_n=2^{-n} \quad \text{for $n\notin\enstq{2^k}{k\in\N}$},\\
	w_{2^k}&=1, \ \lambda_{2^k}=1, \quad \text{for $k\in\N$.}
	\end{align*}
	In this case, an application of Borel-Cantelli lemma shows that a.s.\ for $n$ large enough, no branch $\bb_n$ is ever grafted onto a branch $\bb_{2^k}$ for any $k$. It is then clear that the resulting tree is a.s.\ bounded since the sum of the lengths of the branches $\bb_n$ for $n\notin\{2^k \mid k\in\N \}$ is finite, but it cannot be compact since there exists an infinite number of branches with length $1$.
\end{remark}
\subsection{Monotonicity of Hausdorff dimension}\label{gluing:monotonicity of Hausdorff dimension}
Let us present an argument of monotonicity of the Hausdorff dimension of $\cL$ with respect to the sequence $(\lambda_n)$, on the event on which $\cT$ is compact. 
Let $(w_n)$ be a sequence of weights and $(\lambda_n)$ and $(\lambda'_n)$ be two sequences of scaling factors such that for all $n\geq 1$, we have $\lambda_n\geq \lambda'_n$. 
Suppose that $\left((\bB_n,\bD_n,\bRho_n,\bNu_n)\right)_{n\geq1}$ is a sequence of random compact metric spaces endowed with a probability measure. Then, let $\cT$ (resp.\ $\cT'$) be the structure constructed using the blocks $(\bb_n,\bd_n,\brho_n,\bnu_n)=(\bB_n,\lambda_n\cdot\bD_n,\bRho_n,w_n\cdot \bNu_n)$, for $n\geq 1$, (resp.\ $(\bb_n',\bd_n',\brho_n',\bnu_n')=(\bB_n,\lambda'_n\cdot\bD_n,\bRho_n,w_n\cdot \bNu_n))$. Note that since we use the same sequence of weights we can couple the two corresponding gluing procedures. 

Let $f$ be the application that maps each of the block $\bb_n$ to the corresponding $\bb'_n$. Recall here that we see the blocks as subsets of the structure. We can verify that $f:\cT^*\longrightarrow(\cT')^*$, is $1$-Lipschitz. We can then extend uniquely $f$ to a function $\hat{f}:\cT\longrightarrow\cT'$, which is also $1$-Lipschitz. Suppose $\cT$ is compact. Then its image $\hat{f}(\cT)$ is compact, hence closed in $\cT'$. Since $(\cT')^*\subset \hat{f}(\cT)$ and $(\cT')^*$ is dense in $\cT'$, we have $\hat{f}(\cT)=\cT'$ and so $\hat{f}$ is surjective.
Now since $(\cT')^*=\hat{f}(\cT^*)$, we also have $\cL'=\hat{f}(\cL)$, and since $\hat{f}$ is Lipschitz,
\begin{equation}\label{gluing:monotonicity}
\mathrm{dim_H}(\cL')\leq \mathrm{dim_H}(\cL).\end{equation}

\section{Study of a typical point}\label{gluing:height of a uniform point}
In this section we study the height of a typical point of $\cT_n$, i.e.\ the distance from the root to a point sampled according to $\bar{\mu}_n$. The proofs in this section are really close to those of \cite[Section~1]{curien_random_2017}, to which we refer for details.
\subsection{Coupling with a marked point}\label{gluing:sec:convergence of the marked point}
We construct a sequence of points $(Y_n)_{n\geq 1}$ coupled with the sequence $(\cT_n)_{n\geq 1}$ in such a way that for all $n\geq 1$, the point $Y_n$ has distribution $\bar{\mu}_n$ conditionally on $\cT_n$ and such that the distance from $Y_n$ to the root is non-decreasing in $n$. 
For technical reasons, we in fact define a sequence $((J_n,Y_n))_{n\geq 1}$ such that for any $n\geq 1$, $(J_n,Y_n)$ has distribution $\bar{\mu}_n^*$ conditionally on $(\mathsf{T}_n,\cT_n)$, see \eqref{gluing:eq:def of mu_n*}.
The properties of this construction are stated in the following lemma.
\begin{lemma}\label{gluing:lem:properties of the marked point}
	We can couple the construction of $((\mathsf{T}_n,\cT_n))_{n\geq 1}$ with a sequence $((J_n,Y_n))_{n\geq 1}$ such that for all $n\geq 1$,
	\begin{enumerate}
		\item we have $J_n\in\{1,\dots,n\}$ and $Y_n\in \bb_{J_n}$,
		\item conditionally on $(\mathsf{T}_n,\cT_n)$, the couple $(J_n,Y_n)$ has distribution $\bar{\mu}_n^*$,
		\item for all $1\leq k\leq n$, $([J_n]_k,[Y_n]_k)=(J_k,Y_k)$.
	\end{enumerate}
	Furthermore, under the assumption \ref{gluing:hypothese d}\ref{gluing:moment exponentiel 1}, the sequence $(Y_n)_{n\geq 1}$ almost surely converges in $\cT$ iff 
	\begin{equation}\label{gluing:convergence des deux series}\sum_{n=1}^\infty \frac{\lambda_n w_n}{W_n}\ind{\lambda_n\leq 1}< \infty \quad \text{and} \quad \sum_{n=1}^\infty \frac{w_n}{W_n}\ind{\lambda_n> 1}<\infty.
	\end{equation}
\end{lemma}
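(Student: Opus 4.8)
The plan is to realise the marked sequence through an explicit \emph{size-biased grafting rule} and then to reduce the convergence question to the almost sure convergence of a series of independent non-negative terms. Enlarge the probability space with a sequence $(B_n)_{n\geq 2}$ of independent Bernoulli variables, $\Pp{B_n=1}=w_n/W_n$, together with points $Z_n$ which, given the block $\bb_n$, have law $\bar{\nu}_n$ (the normalised $\bnu_n$), all of this independent of each other and of the blocks. Set $(J_1,Y_1)=(1,Z_1)$, and given $(\mathsf{T}_n,\cT_n,J_n,Y_n)$ build step $n+1$ as follows: if $B_{n+1}=0$, draw a fresh point of law $\bar{\mu}_n$ on $\cT_n$, graft $\bb_{n+1}$ there, and keep $(J_{n+1},Y_{n+1})=(J_n,Y_n)$; if $B_{n+1}=1$, graft $\bb_{n+1}$ directly at the current marked point $Y_n$ and set $(J_{n+1},Y_{n+1})=(n+1,Z_{n+1})$. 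The by-product I will use repeatedly: when $B_{n+1}=1$ the block $\bb_{n+1}$ meets $\cT_n$ only at $Y_n$, so every geodesic from the root to $Y_{n+1}$ passes through $Y_n$ and $\haut(Y_{n+1})=\haut(Y_n)+\bd_{n+1}(\brho_{n+1},Y_{n+1})=\haut(Y_n)+\lambda_{n+1}\cH_{n+1}$, where $\cH_{n+1}$ is, by~\eqref{gluing:hauteur point unif}, an i.i.d.\ copy of $\cH$ independent of $B_{n+1}$ and of the past; when $B_{n+1}=0$ nothing moves. Consequently $\haut(Y_n)$ is non-decreasing, and in fact $\haut(Y_n)=\sum_{k=1}^n B_k\lambda_k\cH_k$ and $\dist(Y_{n-1},Y_n)=B_n\lambda_n\cH_n$, with $B_1:=1$ and $(\cH_k)$ i.i.d.\ copies of $\cH$ independent of $(B_k)$.

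Properties~(1) and~(3) are immediate: $J_n=\max\{k\leq n: B_k=1\}$ and $Y_n\in\bb_{J_n}$, and when $B_{n+1}=1$ the root of $\bb_{n+1}$ is identified with $Y_n$, so $[Y_{n+1}]_n=Y_n$ and $[n+1]_n=J_n$. For property~(2) I argue by induction: assuming $(J_n,Y_n)$ has conditional law $\bar{\mu}_n^*$ given $(\mathsf{T}_n,\cT_n)$, the point at which $\bb_{n+1}$ is grafted has, in both cases $B_{n+1}=0$ and $B_{n+1}=1$, conditional law $\bar{\mu}_n$ given $(\mathsf{T}_n,\cT_n)$; hence $(\mathsf{T}_{n+1},\cT_{n+1})$ has the law prescribed in Section~\ref{gluing:subsec:construction}, and moreover its conditional law is the same under $B_{n+1}=0$ and under $B_{n+1}=1$, so $(\mathsf{T}_{n+1},\cT_{n+1})$ carries no information on $B_{n+1}$. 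Therefore, conditionally on $(\mathsf{T}_{n+1},\cT_{n+1})$ we still have $B_{n+1}=1$ with probability $w_{n+1}/W_{n+1}$; on that event $(J_{n+1},Y_{n+1})$ is a fresh $\bar{\nu}_{n+1}$-point of $\bb_{n+1}$ independent of the rest, and on its complement it equals $(J_n,Y_n)$, whose conditional law given $\cT_n$ (a measurable function of $\cT_{n+1}$) remains $\bar{\mu}_n^*$. Mixing, $\Law\!\big(J_{n+1},Y_{n+1}\mid \mathsf{T}_{n+1},\cT_{n+1}\big)=\tfrac{w_{n+1}}{W_{n+1}}\,\delta_{n+1}\otimes\bar{\nu}_{n+1}+\tfrac{W_n}{W_{n+1}}\,\bar{\mu}_n^*=\bar{\mu}_{n+1}^*$, completing the induction.

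For the convergence statement, note that $\sum_n\dist(Y_{n-1},Y_n)=\sum_n B_n\lambda_n\cH_n$, so if this series converges then $(Y_n)$ is Cauchy and converges in the complete space $\cT$; conversely, if it diverges then $\haut(Y_n)\to\infty$ whereas any limit of $(Y_n)$ in $\cT$ has finite height, so $(Y_n)$ cannot converge. Hence $(Y_n)$ converges a.s.\ iff $\sum_n B_n\lambda_n\cH_n<\infty$ a.s., and since the terms $B_n\lambda_n\cH_n$ are independent and non-negative, this happens iff $\sum_n\Ec{(B_n\lambda_n\cH_n)\wedge 1}=\sum_n\frac{w_n}{W_n}\Ec{(\lambda_n\cH_n)\wedge 1}<\infty$. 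I then split the sum over $\{\lambda_n\leq 1\}$ and $\{\lambda_n>1\}$ and use the elementary two-sided bounds $c\,\lambda_n\leq\Ec{(\lambda_n\cH_n)\wedge 1}\leq \Ec{\cH}\,\lambda_n$ when $\lambda_n\leq 1$ and $c\leq\Ec{(\lambda_n\cH_n)\wedge 1}\leq 1$ when $\lambda_n>1$, where $c:=\Ec{\cH\wedge 1}>0$ because $\Pp{\cH>0}>0$, and $\Ec{\cH}\leq\Ec{\diam(\bB)}<\infty$ by Hypothesis~\ref{gluing:hypothese d}\ref{gluing:moment exponentiel 1}. These bounds show the displayed sum is finite iff both $\sum_n\frac{\lambda_n w_n}{W_n}\ind{\lambda_n\leq 1}$ and $\sum_n\frac{w_n}{W_n}\ind{\lambda_n>1}$ are finite, which is exactly~\eqref{gluing:convergence des deux series}.

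The only genuinely delicate point is property~(2): one must be sure that forcing the new block onto $Y_n$ when $B_{n+1}=1$ neither biases the law of $(\mathsf{T}_{n+1},\cT_{n+1})$ nor leaks information allowing one to detect $B_{n+1}$ from $(\mathsf{T}_{n+1},\cT_{n+1})$. This is a careful but routine induction, for which I would follow closely the corresponding argument of \cite[Section~1]{curien_random_2017}; everything else — the height telescoping and the series estimates — is elementary.
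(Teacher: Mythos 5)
Your construction is the same size-biased coupling the paper uses (the paper's $U_{n+1}\leq w_{n+1}/W_{n+1}$ plays the role of your $B_{n+1}$), with the same telescoping identity $\dist(Y_m,Y_n)=\sum_{k=m+1}^n B_k\lambda_k\cH_k$, and your convergence criterion is just Kolmogorov's three-series theorem for non-negative independent terms made explicit, which is exactly how the paper concludes. So the proposal is correct and follows essentially the same route as the paper's proof.
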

Note that if either 
\begin{equation*}
W_\infty:=\sum_{n=1}^\infty w_n<\infty \quad \text{or} \quad \sum_{n=1}^\infty \frac{w_n \lambda_n}{W_n}<\infty,
\end{equation*} then \eqref{gluing:convergence des deux series} is satisfied, and this is the case under the assumptions of Theorem~\ref{gluing:theoreme principal}. In this case we let
\[Y:=\lim_{n\rightarrow\infty}Y_n.\]
\begin{proof}
	Let $n\geq 2$. Conditionally on $\cT_n$ and $\mathsf{T}_n$, sample a couple $(J_n,Y_n)$ under the measure $\bar{\mu}_n^*$.
	Then two cases may happen:
	\begin{itemize}
		\item with probability $1-w_n /W_n$ : we have $J_n<n$, so the point $Y_n$ belongs to $\cT_{n-1}$, that is $[Y_n]_{n-1} = Y_n$ , and
		conditionally on this event $([J_n]_{n-1},[Y_n]_{n-1})$ has the same distribution as $(J_{n-1},Y_{n-1})$,
		\item with probability $w_n /W_n$ : we have $J_n=n$. In this case the point $Y_n$ is located on the last block $\bb_n$ grafted on $\cT_{n-1}$. Conditionally on this event (if $w_n>0$), $Y_n$ is distributed on this block under the measure $\bnu_n$ and the couple $([J_n]_{n-1},[Y_n]_{n-1})$ is independent of the location of $Y_n$ on the $n$-th block and has the same distribution as $(J_{n-1},Y_{n-1})$.
	\end{itemize}
	From this observation we deduce that 
	\[(\cT_{n-1},\mathsf{T}_{n-1},[J_n]_{n-1}, [Y_n]_{n-1}) = (\cT_{n-1},\mathsf{T}_{n-1},J_{n-1}, Y_{n-1})\] in distribution and more generally,
	$(\cT_{k},\mathsf{T}_{k},[J_n]_{k}, [Y_n]_{k}) = (\cT_{k},\mathsf{T}_{k},J_k, Y_{k})$ in distribution for all $1 \leq k \leq n$.
	
	Reversing this observation, we can construct a sequence $(J_n,Y_n)_{n\geq1}$ (coupled to the $K_n$ and $X_n$ involved in the construction of $\cT^*$) such that conditionally on $\cT_n$ and $\mathsf{T}_n$, the couple $(J_n,Y_n)$ has distribution $\bar{\mu}^*_n$ and that for all $1\leq k \leq n$, we have $([J_n]_k,[Y_n]_k)=(J_k,Y_k)$.
	To do so, we consider:
	\begin{itemize}
		\item  a sequence $(U_n)_{n\geq 1}$ of uniform random variables on $\intervalleoo{0}{1}$,
		\item a sequence $(Z_n)_{n\geq 1}$ of points respectively sampled on $(\bb_n)_{n\geq 1}$ with respective distribution (a normalised version of) the measure $(\bnu_n)_{n\geq 1}$ whenever it is non-zero, (set $Z_n=\brho_n$ a.s.\ whenever $\bnu_n$ is trivial),
		\item a sequence $(I_n,P_n)_{n\geq 1}$, sampled with respective distributions $(\bar{\mu}_n^*)_{n\geq1}$,
	\end{itemize}  independently for all these random variables. Then we construct $(K_n,X_n)$ and $(J_n,Y_n)$ as follows. We set $(J_1,Y_1)=(1,Z_1)$. Then recursively for $n\geq 1$, we assume that $X_{n-1}$ (if $n\neq 1$) and $Y_n$ have been constructed:
	\begin{itemize}
		\item if $U_{n+1}\leq \frac{w_{n+1}}{W_{n+1}}$, then we set $(K_n,X_n):=(J_n,Y_n)$, $J_{n+1}:=n+1$ and $Y_{n+1}:=Z_{n+1}$,
		\item if $U_{n+1}>\frac{w_{n+1}}{W_{n+1}}$, then we set $(K_n,X_n):=(I_n,P_n)$, $J_{n+1}:=J_n$ and $Y_{n+1}:=Y_n$.
	\end{itemize}
	
	We can check that with this construction, for all $1\leq k \leq n$, we have $([J_n]_k,[Y_n]_k)=(J_k,Y_k)$, the $(K_n,X_n)_{n\geq 1}$ are independent with the appropriate distribution and for all $n\geq1$ conditionally on $\cT_n$ and $\mathsf{T}_n$ the couple $(J_n,Y_n)$ has distribution $\bar{\mu}^*_n$.
	Notice that the distance from $Y_n$ to the root $\rho$ is non-decreasing. Denoting $\cT_0=\{\rho\}$ and $Y_0=\rho$, for all $0\leq m\leq n$ we have
	\begin{align}\label{gluing:eq:croissance du point uniforme}
	\dist(Y_n,Y_m)=\dist(Y_n,\cT_m)=\sum_{k=m+1}^n \dist(Z_k,\rho_k)\ind{U_{k}\leq \frac{w_{k}}{W_{k}}},
	\end{align}
	which is equal in distribution to \[\sum_{k=m+1}^n \lambda_k \cH_k \ind{U_{k}\leq \frac{w_{k}}{W_{k}}},\]
	where the $(\cH_k)_{k\geq 1}$ are i.i.d.,  independent of the $(U_k)_{k\geq1}$ and have the law of $\cH$, see \eqref{gluing:hauteur point unif}.
	Under \ref{gluing:hypothese d}\ref{gluing:moment exponentiel 1} the random variable $\cH$ has a finite second moment, and an application of Kolmogorov's three series theorem tells us that the almost sure convergence of $\sum_{k\geq 1} \lambda_k \cH_k \ind{U_{k}\leq \frac{w_{k}}{W_{k}}}$ is equivalent to \eqref{gluing:convergence des deux series}. In this case,  $(Y_n)_{n\geq 1}$ is a Cauchy sequence in the complete space $\cT$ and hence it converges.
\end{proof}
Also notice that with this construction, the discrete counterpart of \eqref{gluing:eq:croissance du point uniforme} is
\begin{align}\label{gluing:eq:croissance hauteur point marque}
\dist_\mathsf{T}(J_n,J_m)=\sum_{k=m+1}^{n}\ind{U_k\leq \frac{w_k}{W_k}} \quad \text{and so} \quad \haut_\mathsf{T}(J_n)=\sum_{k=2}^{n}\ind{U_k\leq \frac{w_k}{W_k}}.
\end{align}
Remark that for any $\theta\in\R$,
\begin{align}\label{gluing:eq:hauteur discrete moment exponentiels}
\Ec{\exp\left(\theta\haut_\mathsf{T}(J_n)\right)}&=\Ec{\exp\left(\theta\sum_{k=2}^{n}\ind{U_k\leq \frac{w_k}{W_k}}\right)}\notag \\
&=\prod_{k=2}^n \left(\frac{W_k-w_k}{W_k}\cdot 1+\frac{w_k}{W_k}e^{\theta}\right)\notag \\
&=\exp\left(\sum_{k=2}^{n}\log\left(1+(e^\theta-1)\frac{w_k}{W_k}\right)\right)\notag \\
&\leq \exp\left((e^\theta-1)\sum_{k=2}^{n}\frac{w_k}{W_k}\right),
\end{align}
where in the last line we use the inequality $\log(1+x)\leq x$, valid for all $x>-1$. 
\subsection{Convergence \texorpdfstring{of the measure $\bar{\mu}_n$}{of the normalised weight measure}}
\begin{proposition}\label{gluing:convergence of the measure mu}
	Assume that $\Ec{\cH^2}<\infty$ and $\Pp{\cH>0}>0$ and that \eqref{gluing:convergence des deux series} holds. Then almost surely there exists a probability measure $\bar{\mu}$ on $\cT$ such that
	\[\bar{\mu}_n\underset{n\rightarrow\infty}{\longrightarrow} \bar{\mu} \qquad \text{weakly.}\]
	Furthermore, conditionally on $(\cT,\bar{\mu})$, the point $Y$ is distributed according to $\bar{\mu}$ almost surely. If $W_\infty<\infty$, then $\bar{\mu}=\frac{1}{W_\infty}\mu_\infty$, and $\bar{\mu}$ is concentrated on $\cT^*$. If $W_\infty=\infty$, then $\bar{\mu}$ is concentrated on $\cL$.
\end{proposition}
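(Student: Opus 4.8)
The plan is to follow the template of \cite[Section~1]{curien_random_2017}, producing $\bar\mu$ as the conditional law of the limiting marked point $Y$ given the $\sigma$-algebra $\cF_\infty:=\sigma\big((\mathsf{T}_n,\cT_n)_{n\geq1}\big)$ of the construction (which does \emph{not} contain the marking), and deducing the weak convergence of $\bar\mu_n$ by squeezing it between two sequences I can control. Write $\cF_n:=\sigma(\mathsf{T}_n,\cT_n)$, let $\hat\mu_n$ and $\hat\mu_\infty$ be regular conditional laws of $Y$ given $\cF_n$ and $\cF_\infty$ (they exist since $\cT$ is Polish), and recall from Lemma~\ref{gluing:lem:properties of the marked point} that $\bar\mu_n$ is the conditional law of $Y_n$ given $\cF_n$ and that $Y_n\to Y$ almost surely under our assumptions. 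Since the pair $(Y_n,Y)$ is, conditionally on $\cF_n$, a coupling of $\bar\mu_n$ and $\hat\mu_n$, the bounded $1$-Wasserstein distance $\overline W_1$ (which metrizes weak convergence on the separable space $\cT$) satisfies $\overline W_1(\bar\mu_n,\hat\mu_n)\leq\Ecsq{\dist(Y_n,Y)\wedge1}{\cF_n}$, while $(\hat\mu_n)_{n}$ is a measure-valued Doob martingale.

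For the convergence, I would first use the explicit coupling of Lemma~\ref{gluing:lem:properties of the marked point} --- in which a new block that captures the marked point is grafted \emph{at} the current marked point --- together with $Y_m\to Y$, to get
\[\dist(Y_n,Y)=\dist(Y,\cT_n)=\sum_{k>n}\dist(Z_k,\rho_k)\,\ind{U_k\leq w_k/W_k},\]
an expression depending only on randomness of index $>n$, hence independent of $\cF_n$; thus $\overline W_1(\bar\mu_n,\hat\mu_n)\leq\Ec{\dist(Y_n,Y)\wedge1}$, a deterministic quantity. Under $\Ec{\cH^2}<\infty$ and \eqref{gluing:convergence des deux series} the series $\sum_k\dist(Z_k,\rho_k)\ind{U_k\leq w_k/W_k}$ converges almost surely (Kolmogorov's three-series theorem, as in the proof of Lemma~\ref{gluing:lem:properties of the marked point}), so its tail $\dist(Y_n,Y)\to0$ a.s., and dominated convergence gives $\Ec{\dist(Y_n,Y)\wedge1}\to0$. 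On the martingale side, for a fixed bounded continuous $f$ on $\cT$, L\'evy's upward theorem gives $\hat\mu_n(f)=\Ecsq{f(Y)}{\cF_n}\to\Ecsq{f(Y)}{\cF_\infty}=\hat\mu_\infty(f)$ a.s.; running this over a fixed countable convergence-determining family of bounded continuous functions (one exists on any Polish space) yields $\hat\mu_n\to\hat\mu_\infty$ weakly, a.s. Combining the two, $\bar\mu_n\to\hat\mu_\infty$ weakly a.s., and I set $\bar\mu:=\hat\mu_\infty$.

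It remains to identify $\bar\mu$. As $Y$ has law $\hat\mu_\infty=\bar\mu$ given $\cF_\infty$ and both $\cT$ and $\bar\mu$ are $\cF_\infty$-measurable, taking conditional expectation onto the sub-$\sigma$-algebra $\sigma(\cT,\bar\mu)\subseteq\cF_\infty$ shows $Y$ has law $\bar\mu$ given $(\cT,\bar\mu)$. If $W_\infty<\infty$, the increasing limit $\mu_\infty:=\lim_n\mu_n$ is a finite Borel measure with $\mu_\infty(\cT\setminus\cT_n)\leq W_\infty-W_n\to0$, hence concentrated on $\cT^*$, and $\lVert\bar\mu_n-\mu_\infty/W_\infty\rVert_{\mathrm{TV}}\to0$ (elementary, since $\mu_n\uparrow\mu_\infty$ and $W_n\uparrow W_\infty$), so $\bar\mu=\mu_\infty/W_\infty$ by uniqueness of the weak limit, and is concentrated on $\cT^*$. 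If $W_\infty=\infty$, then $\prod_{k\geq2}(1-w_k/W_k)=W_1/W_\infty=0$ forces $\sum_k w_k/W_k=\infty$; the independent events $\{U_k\leq w_k/W_k,\ \cH_k>0\}$ (for $k>n$) then have probabilities summing to $\Pp{\cH>0}\sum_{k>n}w_k/W_k=\infty$, so by the second Borel--Cantelli lemma the sum in the display above is a.s.\ strictly positive for every fixed $n$, i.e.\ $\dist(Y,\cT_n)>0$ for all $n$, whence $Y\in\cL$ almost surely; since $Y$ has law $\bar\mu$ given $\cF_\infty$ and $\cL$ is Borel, $\bar\mu(\cL)=\Ecsq{\ind{Y\in\cL}}{\cF_\infty}=1$ a.s.

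The main obstacle is the book-keeping of conditional laws: one cannot prove $(\bar\mu_n)$ Cauchy by comparing consecutive terms because $Y_n$ is not conditionally independent of the post-$n$ construction given $\cF_n$ (for instance $X_n=Y_n$ on the event that $\bb_{n+1}$ is grafted at the marked point), so the natural coupling of $\bar\mu_m$ with $\bar\mu_n$ has the wrong marginal --- routing through the genuine Doob martingale $(\hat\mu_n)$ circumvents this. The two points that need care are that the pair $(Y_n,Y)$ really realizes $(\bar\mu_n,\hat\mu_n)$ conditionally on $\cF_n$, and that $\dist(Y_n,Y)$, although it involves $Y_n$, is a function of the post-$n$ randomness only --- this is exactly where the precise grafting rule of the coupling in Lemma~\ref{gluing:lem:properties of the marked point} is used. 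The remaining ingredients (the three-series convergence, the Borel--Cantelli step, the total-variation estimate) are routine.
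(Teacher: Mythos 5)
Your proposal is correct and follows essentially the route the paper intends: the paper leaves the proof to the reader as "very similar to \cite[Theorem~4]{curien_random_2017}", and that argument is exactly yours — compare $\bar\mu_n$ (the conditional law of $Y_n$ given the construction up to time $n$) with the Doob martingale of conditional laws of $Y$, using the coupling of Lemma~\ref{gluing:lem:properties of the marked point} and the fact that $\dist(Y_n,Y)$ depends only on post-$n$ randomness, then identify the limit and its support via $Y$. The only cosmetic difference is that you phrase the comparison through a bounded $W_1$ coupling bound instead of function-by-function Lipschitz estimates; the substance, including the $W_\infty<\infty$ total-variation identification and the Borel--Cantelli argument putting $Y$ in $\cL$ when $W_\infty=\infty$, matches the intended proof.
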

The proof of the last proposition is very similar to the proof of \cite[Theorem~4]{curien_random_2017}, and is left to the reader. We can easily check that the assumptions of Proposition~\ref{gluing:convergence of the measure mu} are satisfied under the hypotheses of Theorem~\ref{gluing:theoreme principal}. We now state an additional lemma that will be useful later in the paper.
\begin{lemma}\label{gluing:urne de polya}
	Suppose that the assumptions of Proposition~\ref{gluing:convergence of the measure mu} hold, that $\bar{\mu}$ is concentrated on the set $\cL$ and that the sequence of weights satisfies $\frac{w_n}{W_n}\leq n^{-1+\petito{1}}$. Then almost surely, we have
	\[\bar{\mu}(\cT(\bb_n))\leq n^{-1+\petitoom{1}},\]
	where the random function $\petitoom{1}$ is considered as $n \rightarrow\infty$.
\end{lemma}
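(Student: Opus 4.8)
The plan is to exploit the natural interpretation of the quantity $\bar\mu(\cT(\bb_n))$ as the limiting mass of a Pólya-type urn, as the name of the lemma suggests. First I would observe that since $\bar\mu$ is concentrated on $\cL$ and is the weak limit of $\bar\mu_n$, while $\cT(\bb_n)$ is (up to its boundary, which is $\bar\mu$-negligible because $\bar\mu$ charges only leaves) a ``descending'' subset whose indicator is continuous at $\bar\mu$-a.e.\ point, we get $\bar\mu(\cT(\bb_n)) = \lim_{m\to\infty} \bar\mu_m(\cT(\bb_n))$. Now $\bar\mu_m(\cT(\bb_n))$ is exactly $W_m^{-1}$ times the total weight of all blocks $\bb_k$ with $k\succeq n$ that have appeared by time $m$; write $S_m^{(n)} := \sum_{k\le m,\ k\succeq n} w_k$ so that $\bar\mu(\cT(\bb_n)) = \lim_{m\to\infty} S_m^{(n)}/W_m$. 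The increment $S_{m+1}^{(n)} - S_m^{(n)}$ equals $w_{m+1}\ind{X_m \in \cT(\bb_n)}$, and conditionally on the past the probability that $\bb_{m+1}$ is grafted inside $\cT(\bb_n)$ is precisely $S_m^{(n)}/W_m$ — this is the Pólya-urn structure (two colours: ``inside $\cT(\bb_n)$'' and ``outside'', with reinforcement weights $w_{m+1}$ at step $m$).

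Next I would make this a martingale statement: set $M_m := S_m^{(n)}/W_m$ for $m\ge n$. A direct computation gives $\Ecsq{M_{m+1}}{\mathcal F_m} = \frac{1}{W_{m+1}}\bigl(S_m^{(n)} + w_{m+1}\cdot \frac{S_m^{(n)}}{W_m}\bigr) = \frac{S_m^{(n)}}{W_m}\cdot\frac{W_m + w_{m+1}}{W_{m+1}} = M_m$, so $(M_m)_{m\ge n}$ is a bounded nonnegative martingale and converges a.s.\ and in $L^1$ to $\bar\mu(\cT(\bb_n))$, with $\Ec{\bar\mu(\cT(\bb_n))} = M_n = w_n/W_n \le n^{-1+o(1)}$ by hypothesis. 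This already pins down the mean; the content of the lemma is the almost sure, simultaneous-in-$n$ statement with a random $\petitoom{1}$. To upgrade, I would control fluctuations: either bound a higher moment $\Ec{\bar\mu(\cT(\bb_n))^p}$ and apply Borel–Cantelli along $n$, or — more robustly given the $n^{\petito{1}}$ slack — use a Chernoff/exponential estimate. Concretely, conditionally on the event $\{\bb_{n}$ exists$\}$ and using that the urn started from masses $(w_n, W_{n-1})$ behaves like a time-changed Pólya urn, $\bar\mu(\cT(\bb_n))$ is stochastically dominated in a way that lets one write, for any $\theta>0$, $\Pp{\bar\mu(\cT(\bb_n)) > n^{-1+\epsilon}}$. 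Using optional stopping on the exponential supermartingale built from $(M_m)$, or directly $\Pp{\sup_m M_m > t} \le M_n/t = w_n/(W_n t)$ from the maximal inequality for nonnegative martingales, we get $\Pp{\bar\mu(\cT(\bb_n)) > n^{-1+\epsilon}} \le \frac{w_n}{W_n}\cdot n^{1-\epsilon} \le n^{-\epsilon + o(1)}$.

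The summability $\sum_n n^{-\epsilon+o(1)}$ fails for small $\epsilon$, so the crude maximal inequality is not quite enough and this is where the main work — and the main obstacle — lies. The fix I would pursue is to tighten the tail bound by a genuine moment computation for the Beta-type limit of the urn: the limit $\bar\mu(\cT(\bb_n))$, conditionally on its initial weights, has moments $\Ec{\bar\mu(\cT(\bb_n))^p}$ that one can bound by $\bigl(C\, w_n/W_n\bigr)$ times a combinatorial factor (exploiting that the weights $w_k$ enter only through the ratios $w_k/W_k \le k^{-1+o(1)}$, so the urn is ``dominated'' by a uniform-reinforcement urn whose limit is a genuine Beta variable); taking $p = p(\epsilon)$ large makes $\Pp{\bar\mu(\cT(\bb_n)) > n^{-1+\epsilon}} \le n^{-p\epsilon + o(1)}$ summable, and Borel–Cantelli then gives that a.s.\ for all $n$ large, $\bar\mu(\cT(\bb_n)) \le n^{-1+\epsilon}$; since $\epsilon>0$ is arbitrary this is the claim $\bar\mu(\cT(\bb_n)) \le n^{-1+\petitoom{1}}$. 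The delicate point is that the $o(1)$'s in the hypothesis $w_n/W_n\le n^{-1+o(1)}$ accumulate over the (roughly $\log$-many effective) steps of the urn, so one must check they remain a single $\petito{1}$; I expect this to be routine but it is the step that requires care, together with the domination argument reducing the general-weight urn to a Beta-distributed limit so that the high moments are actually finite and of the right order.
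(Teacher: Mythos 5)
Your overall strategy is exactly the paper's: identify $\bar{\mu}(\cT(\bb_n))$ via Portmanteau with the a.s.\ limit of the martingale $M^{(n)}_m=\bar{\mu}_m(\cT(\bb_n))$ (a Pemantle-type time-dependent P\'olya urn started from mass ratio $w_n/W_n\leq n^{-1+\petito{1}}$), then get a tail bound $\Pp{\bar{\mu}(\cT(\bb_n))>n^{-1+\epsilon}}$ summable in $n$ from a high-moment estimate plus Markov, and conclude by Borel--Cantelli. You also correctly diagnose that the first moment/maximal inequality alone is not summable and that the whole content lies in proving something like $\Ec{\bar{\mu}(\cT(\bb_n))^p}\leq n^{-p+\petito{1}}$ for arbitrary $p$.

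The genuine gap is that this key moment bound is never established, and the mechanism you propose for it --- stochastic domination of the general-weight urn by a ``uniform-reinforcement'' P\'olya urn with a Beta limit --- is both unsubstantiated and doubtful as stated. First, there is no monotonicity theorem invoked (or obviously available) saying that enlarging the reinforcement ratios stochastically increases the limit; second, even granting such monotonicity, a single classical urn does not dominate ratio-wise: after $k$ additional draws a constant-reinforcement urn has ratio of order $1/k$, while your hypothesis only gives $w_i/W_i\leq i^{-1+\petito{1}}$, which can exceed $c/i$ (e.g.\ behave like $\log i/i$), so the required inequality between ratios fails for $k$ large. Also, as written, your bound ``$\Ec{\bar{\mu}(\cT(\bb_n))^p}\leq (Cw_n/W_n)\times$ combinatorial factor'' would only give $n^{-1+\petito{1}}$ up to constants, which makes Markov worse as $p$ grows; you need the factor to carry $(w_n/W_n)^{p-1}$ as well. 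The paper closes exactly this hole without any comparison argument: writing the one-step recursion $M^{(n)}_{i+1}=\frac{W_i}{W_{i+1}}M^{(n)}_i+\frac{w_{i+1}}{W_{i+1}}\ind{U_{i+1}\leq M^{(n)}_i}$, expanding the $k$-th power, using the elementary inequality $(1-x)^k+kx(1-x)^{k-1}\leq 1$ to kill the two leading terms, and bounding the remaining terms by induction on the moment order $k$ (with $\Ec{(M^{(n)}_i)^{l+1}}\leq n^{-l-1+\petito{1}}$ for $l\leq k-2$), then summing the increments over $i\geq n$; the sum $\sum_{j\geq n}j^{-k+l+\petito{1}}\approx n^{-k+l+1+\petito{1}}$ is where the accumulation of $\petito{1}$'s you worried about is controlled uniformly in $i$. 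With that induction in place, the rest of your argument (dominated convergence to pass to $M^{(n)}_\infty$, Markov with $p$ large, Borel--Cantelli, $\epsilon\rightarrow 0$) is exactly the paper's conclusion.
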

\begin{proof}
	Let us introduce some notation. If $i\geq n$, we set $M^{(n)}_i:=\bar{\mu}_i(\cT(\bb_n))$ the relative mass of the tree descending from $\bb_n$ in $\cT_i$. As $i$ varies, this sequence of random variables evolves like one of Pemantle's time-dependent P\'olya urns (see \cite{pemantle_time_1990}) and is therefore a martingale. 
	The topological boundary of $\cT(\bb_n)$ in $\cT$ is either the empty set or the singleton $\{\brho_n\}$, thus it has zero $\bar{\mu}$-measure\footnote{Indeed, under the assumptions of the lemma, $\bar{\mu}$ is carried on the leaves.}. It follows from Portmanteau theorem that the quantity of interest $\bar{\mu}(\cT(\bb_n))$ corresponds to $M^{(n)}_\infty$, the almost sure limit of this positive martingale. 
	We can write 
	\[M^{(n)}_{i+1}=\left(\frac{W_i}{W_{i+1}}\right)M^{(n)}_i+\frac{w_{i+1}}{W_{i+1}}\ind{U_{i+1}\leq M^{(n)}_i},\]
	with $(U_{i})_{i\geq 1}$ a sequence of i.i.d.\ random variables, uniform on $\intervalleoo{0}{1}$. We are going to show by induction on $k\geq1$ that there exists a function $\petito{1}$ as $n\rightarrow\infty$ such that for all $i\geq n$, we have
	\[\Ec{(M^{(n)}_{i})^k}\leq n^{-k+\petito{1}}.\]
	Note that we use the notation $\petito{1}$ for all such functions, but that in this proof, the corresponding functions can depend on $k$ but not on $i$.
	
	$\bullet$ For $k=1$, the result follows from the fact that $(M^{(n)}_i)_{i\geq n}$ is a martingale and that almost surely $M^{(n)}_n\leq\frac{w_n}{W_n}\leq n^{-1+\petito{1}}$.
	
	$\bullet$ Let $k\geq 2$. Suppose that the result is true for all $1\leq l \leq k-1$. Then
	\begin{align*}
	& \ \Ecsq{\left(M^{(n)}_{i+1}\right)^k}{M^{(n)}_i}\\
	&=\Ecsq{ \left(\left(\frac{W_i}{W_{i+1}}\right)M^{(n)}_i+\frac{w_{i+1}}{W_{i+1}}\ind{U_{i+1}\leq M^{(n)}_i}\right)^k}{M^{(n)}_i}\\
	&= \left(\frac{W_i}{W_{i+1}}\right)^k \left(M^{(n)}_i\right)^k + \Ecsq{\sum_{l=0}^{k-1}\binom{k}{l}\left(\frac{W_i}{W_{i+1}}\right)^l \left(M^{(n)}_i\right)^{l}\left(\frac{w_{i+1}}{W_{i+1}}\ind{U_{i+1}\leq M^{(n)}_i}\right)^{k-l}}{M^{(n)}_i} \\
	&= \left(\frac{W_i}{W_{i+1}}\right)^k \left(M^{(n)}_i\right)^k + \sum_{l=0}^{k-1}\binom{k}{l}\left(\frac{W_i}{W_{i+1}}\right)^l \left(M^{(n)}_i\right)^{l+1}\left(\frac{w_{i+1}}{W_{i+1}}\right)^{k-l} \\
	&\leq \left(M^{(n)}_i\right)^k\left(\left(\frac{W_i}{W_{i+1}}\right)^k +k\cdot \frac{w_{i+1}}{W_{i+1}}\left(\frac{W_i}{W_{i+1}}\right)^{k-1}\right)  + \sum_{l=0}^{k-2}\binom{k}{l}\left(M^{(n)}_i\right)^{l+1}\left(\frac{w_{i+1}}{W_{i+1}}\right)^{k-l}.
	\end{align*}
	Now taking the expectation and using the fact that $\forall x\in\intervalleff{0}{1}, \ (1-x)^{k}+k(1-x)^{k-1}x\leq 1$, we get, using the induction hypothesis,
	\begin{align*}
	\Ec{(M^{(n)}_{i+1})^k}&\leq \Ec{(M^{(n)}_{i})^k}+\sum_{l=0}^{k-2}\binom{k}{l}\Ec{\left(M^{(n)}_i\right)^{l+1}}\left(\frac{w_{i+1}}{W_{i+1}}\right)^{k-l}\\
	&\leq \Ec{(M^{(n)}_{i})^k}+\sum_{l=0}^{k-2}\binom{k}{l}n^{-(l+1)+\petito{1}}(i^{-1+\petito{1}})^{k-l}.
	\end{align*}
	Summing over all $i$ we get that, for all $i\geq n$:
	\begin{align*}
	\Ec{(M^{(n)}_{i})^k}&\leq\Ec{(M^{(n)}_{n})^k}+\sum_{j=n}^\infty\sum_{l=0}^{k-2}\binom{k}{l}n^{-l-1+\petito{1}}j^{-k+l+\petito{1}}\\
	&\leq \Ec{(M^{(n)}_{n})^k}+\sum_{l=0}^{k-2}\binom{k}{l}(n^{-l-1+\petito{1}})\sum_{j=n}^\infty j^{-k+l+\petito{1}}\\
	&\leq n^{-k+\petito{1}}+\sum_{l=0}^{k-2}\binom{k}{l}n^{-l-1+\petito{1}}n^{-k+l+1+\petito{1}}\\
	&\leq n^{-k+\petito{1}}.
	\end{align*}
	This finishes the proof by induction.
	This property passes to the limit by dominated convergence so, for all $n\geq 1$, we have $\Ec{(M^{(n)}_{\infty})^k}\leq n^{-k+\petito{1}}$. For $N$ an integer and $\epsilon>0$,
	\begin{align*}
	\Pp{M^{(n)}_\infty\geq n^{-1+\epsilon}}&\leq n^{N-N\epsilon} \Ec{(M^{(n)}_{\infty})^N}\\
	&\leq n^{-N\epsilon+\petito{1}}.
	\end{align*}
	If we take $N$ large enough, those quantities are summable and so, using the Borel-Cantelli lemma we get that with probability one, $M^{(n)}_\infty\leq n^{-1+\epsilon}$ for all $n$ large enough. This completes the proof.
\end{proof}

\section{Upper-bounds and compactness for the $(\alpha,\beta)$-model}\label{gluing:sec:upper bounds}
In this section, we compute upper-bounds on the Hausdorff dimension of the set $\cL$. We first prove Proposition~\ref{gluing:prop:compacité+majoration dimension}, which tells us that, under the condition that $\lambda_n\leq n^{-\alpha+\petito{1}}$ for some $\alpha>0$ and in a very general setting for the behaviour of the weights $(w_n)$, the dimension is bounded above by $1/\alpha$. The techniques used in the proof are very robust, and do not depend on the geometry of the blocks nor on the sequence of weights.
In a second step, in Proposition~\ref{gluing:dimension}, we handle the more specific case where the underlying block satisfies Hypotheses \ref{gluing:hypothese d} and that $\lambda_n\leq n^{-\alpha+\petito{1}}$ for some $0<\alpha<1/d$ and $w_n\leq n^{-\beta+\petito{1}}$ for some $\beta>1$. In the proof of this proposition, a careful analysis allows us to refine some of the arguments of the previous proof and prove upper-bounds on the Hausdorff dimension of $\cL$ that are below the "generic" value $1/\alpha$, given by Proposition~\ref{gluing:prop:compacité+majoration dimension}. The techniques used for the proof are new and really take into account the behaviour of the weights and the geometry of the blocks.

\subsection{Upper-bound independent of the weights and compactness}\label{gluing:subsec:upper bounds}

Notice that under \ref{gluing:hypothese d}\ref{gluing:moment exponentiel 1}, the underlying block $(\bB,\bD,\bRho,\bNu)$ satisfies, for any $N>0$,
\begin{align*}
\Pp{\diam(B)\geq n^\epsilon}\leq \frac{\Ec{\diam(\bB)^N}}{n^{-N\epsilon}},
\end{align*}
which is summable if $N$ is large enough.
Hence if $(\bB_n)$ is an i.i.d.\ sequence with the same law as $\bB$, then using the Borel-Cantelli lemma we have almost surely,
\begin{equation}\label{gluing:diam o 1}
\diam(\bB_n)\leq n^{\petitoom{1}}.
\end{equation}
\begin{proposition}\label{gluing:majoration hausdorff} \label{gluing:prop:compacité+majoration dimension}
	Suppose $\lambda_n\leq n^{-\alpha+\petito{1}}$, with $\alpha>0$, and that for all $n$, we have $W_n\leq n^{\gamma}$ for some $\gamma>0$. Suppose also that \eqref{gluing:diam o 1} holds. Then the tree-like structure $\cT$ is almost surely compact and we have
	\begin{enumerate}
		\item $\mathrm{d_H}(\cT_{n},\cT)\leq n^{-\alpha+\petitoom{1}}$,\label{gluing:distance hausdorff tn}
		\item $\mathrm{dim_H}(\cL)\leq \frac{1}{\alpha}.$\label{gluing:majoration dim hausdorff}
	\end{enumerate}
\end{proposition}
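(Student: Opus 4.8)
The plan is to first establish the covering estimate \ref{gluing:distance hausdorff tn}, which controls the Hausdorff distance between $\cT_n$ and $\cT$, and then deduce compactness and the dimension bound \ref{gluing:majoration dim hausdorff} from it. For the first point, I would bound $\dist_H(\cT_n,\cT)$ by $\sup_{i\succeq n,\, \bb_i \not\subseteq \cT_n}\diam(\cT(\bb_i))$, since every point of $\cT$ not already in $\cT_n$ lies in some substructure $\cT(\bb_i)$ hanging off of $\cT_n$ at a block $\bb_i$ with $i$ larger than $n$ (more precisely, one should look at the blocks $\bb_i$ with $i>n$ whose root projects into $\cT_n$, i.e. the ``first generation'' blocks grafted after time $n$). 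The key subclaim is then that $\sup_{i\geq n}\diam(\cT(\bb_i))\leq n^{-\alpha+\petitoom{1}}$ almost surely.

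To get this subclaim, I would fix $i$ and control $\diam(\cT(\bb_i))$ by a sum over all $j\succeq i$ of $\diam(\bb_j)=\lambda_j\diam(\bB_j)$. By \eqref{gluing:diam o 1} we have $\diam(\bB_j)\leq j^{\petitoom{1}}$ and by hypothesis $\lambda_j\leq j^{-\alpha+\petito{1}}$, so each such term is at most $j^{-\alpha+\petitoom{1}}$; but there are many descendants, so a naive bound is not enough. Instead I would control the \emph{number} of descendants of $\bb_i$ with index in a dyadic block $\intervalleentier{2^m}{2^{m+1}}$: each block $\bb_j$ with $j$ in that range is grafted onto one of the first $j$ blocks with probability $w_{[\cdot]}/W_{j-1}$, and since $W_{j-1}\geq w_1>0$ the probability that it lands in $\cT(\bb_i)$ is at most $W_{j-1}^{-1}\cdot\bar\mu_{j-1}(\cT(\bb_i))\cdot W_{j-1}$... more simply, one uses that conditionally the expected mass falling in $\cT(\bb_i)$ accumulates like a martingale (cf. Lemma~\ref{gluing:urne de polya} in spirit), or one argues directly that the contribution of generation-$m$ descendants to the diameter is, after a Borel--Cantelli argument over $i$ and $m$, bounded by $2^{m(-\alpha+\petito{1})}$ times the number of such descendants, which is itself $\petitoom{1}$-controlled because the total weight $W_j$ is only polynomially large. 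Summing the geometric-type series in $m$ for $m$ with $2^m\geq i$ gives $i^{-\alpha+\petitoom{1}}$, and then a union bound / Borel--Cantelli over $i$ upgrades this to the uniform statement $\sup_{i\geq n}\diam(\cT(\bb_i))\leq n^{-\alpha+\petitoom{1}}$.

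Once \ref{gluing:distance hausdorff tn} is known, compactness is immediate: $\cT$ is the increasing union of the compact sets $\cT_n$ together with its limit points, and $\dist_H(\cT_n,\cT)\to 0$ exhibits $\cT$ as a complete totally bounded space, hence compact. For the dimension bound \ref{gluing:majoration dim hausdorff}, I would produce an explicit covering of $\cL$: fix $n$, and cover each block $\bb_i$ for $i\in\intervalleentier{n}{2n}$ (or rather each first-generation substructure grafted in that index window) by a single ball of radius $n^{-\alpha+\petitoom{1}}$, using the diameter estimate above; iterating over dyadic windows $\intervalleentier{2^k n}{2^{k+1}n}$ one covers all of $\cL$ by balls of radius $\rho_k:= (2^k n)^{-\alpha+\petitoom{1}}$, with roughly $2^k n$ balls in the $k$-th window. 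The $s$-dimensional Hausdorff pre-measure is then bounded by $\sum_k (2^k n)\cdot \rho_k^{s}\leq \sum_k (2^k n)^{1-\alpha s+\petitoom{1}}$, which is finite (indeed tends to $0$ as $n\to\infty$) as soon as $s>1/\alpha$. Hence $\mathrm{dim_H}(\cL)\leq 1/\alpha$.

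The main obstacle I anticipate is the second paragraph: getting a clean, uniform-in-$i$ bound on $\diam(\cT(\bb_i))$ requires simultaneously controlling the random branching structure (how many blocks ever descend from $\bb_i$, and with what indices) and the geometric sizes $\lambda_j\diam(\bB_j)$, and then making the two Borel--Cantelli arguments (one over the $\diam(\bB_j)$, one over the pairs $(i,m)$) fit together so that the resulting $\petitoom{1}$ is genuinely deterministic-looking and uniform. Everything else — compactness, and turning the diameter bound into a covering bound on dimension — is routine once this estimate is in hand.
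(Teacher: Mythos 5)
Your overall architecture (reduce point (i) to a uniform bound on $\diam(\cT(\bb_i))$, then deduce compactness and get (ii) by a covering whose $s$-volume is $\sum_k (2^k n)^{1-\alpha s+\petito{1}}$) matches the paper's, and the last two steps are fine. The genuine gap is in your proof of the key subclaim $\sup_{i\geq n}\diam(\cT(\bb_i))\leq n^{-\alpha+\petitoom{1}}$: bounding $\diam(\cT(\bb_i))$ by $\sum_{j\succeq i}\diam(\bb_j)$ fails whenever $\alpha\leq 1$, which is precisely the interesting regime (Aldous' CRT is $\alpha=1/2$). The number of descendants of $\bb_i$ with index in $\intervalleentier{2^m}{2^{m+1}}$ is not $2^{m\,\petito{1}}$: by the martingale property of the relative mass $\bar{\mu}_j(\cT(\bb_i))$, its expectation is of order $2^m\, w_i/W_i$ (for $\beta\leq 1$, roughly $2^m/i$), and the polynomial growth of $W_n$ does nothing to reduce it. Hence the window-$m$ contribution to your sum is of order $2^{m(1-\alpha)+\petito{1}}\,w_i/W_i$, and the series over $m$ diverges for $\alpha<1$; indeed the sum of the diameters of \emph{all} blocks descending from $\bb_i$ has infinite expectation (and is genuinely infinite in the segment case), so no Borel--Cantelli bookkeeping can repair this route. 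Note also that Lemma~\ref{gluing:urne de polya}, which you invoke ``in spirit'', is not available here: its hypotheses ($\bar{\mu}$ concentrated on the leaves, $w_n/W_n\leq n^{-1+\petito{1}}$) are not assumptions of the present proposition.

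The missing idea is that the height of $\cT(\bb_i)$ is realized along \emph{ancestral chains}, whose genealogical length is only logarithmic, rather than over the full set of descendants. The paper bounds, for $k\in\intervalleentier{2^i+1}{2^{i+1}}$, $\dist(X_k,\cT_{2^i})\leq \bigl(\sup_{2^i< l\leq 2^{i+1}}\lambda_l\diam(\bB_l)\bigr)\cdot \haut_\mathsf{T}(K_k)$, and then controls the depth $\haut_\mathsf{T}(K_k)$ using the exponential moment estimate \eqref{gluing:eq:hauteur discrete moment exponentiels} together with $\sum_{l\leq n} w_l/W_l\leq C\log n$ (Lemma~\ref{gluing:lem:croissance logarithmique} — this is where the polynomial bound on $W_n$ actually enters), Markov's inequality and Borel--Cantelli, giving $\haut_\mathsf{T}(K_k)=(2^i)^{\petitoom{1}}$. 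This yields $\mathrm{d_H}(\cT_{2^i},\cT_{2^{i+1}})\leq (2^i)^{-\alpha+\petitoom{1}}$ for every $\alpha>0$, and summing over dyadic scales gives compactness, point (i) and the bound \eqref{gluing:hauteur sous arbres}, after which your covering argument for (ii) goes through unchanged. You should replace your descendant-sum estimate by such a chain-length-times-maximal-block-diameter argument.
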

Since our model is invariant by multiplying all the weights by the same constant, we can always assume that $w_1\leq 1$. Hence, the assumption in the lemma is always satisfied if $W_n$ grows at most polynomially in $n$, which is the case if Hyp.~\ref{gluing:condition beta grand}, Hyp.~\ref{gluing:condition beta 1} or Hyp.~\ref{gluing:condition beta petit} is fulfilled, for any choice of $\alpha>0$ and $\beta\in\R$.
\begin{proof}[Proof of Proposition~\ref{gluing:majoration hausdorff}]
	We start with point \ref{gluing:distance hausdorff tn}. First,
	\begin{align*}
	\mathrm{d_H}(\cT_{2^{i}},\cT_{2^{i+1}})&\leq \sup_{2^i+1\leq k \leq 2^{i+1}} \lambda_k \diam(\bB_k) + \sup_{2^i+1\leq k \leq 2^{i+1}}\dist(\rho_k,\cT_{2^i}).
	\end{align*}
	For any $2^i\leq k \leq 2^{i+1}-1$, the point $\rho_{k+1}$ in the tree is identified with the point $X_{k}$, taken under the measure $\bar\mu_{k}$ on the tree $\cT_{k}$. From our construction in Section~\ref{gluing:notation}, the point $X_{k}$ belongs to some $\bb_{K_{k}}$, and the couple $(K_k, X_k)$ is sampled with measure $\bar{\mu}_k^*$. 
	Bounding the contribution of every block along the ancestral line with their maximum, we get
	\begin{align}\label{gluing:eq:majoration distance Xk T2i}
	\dist(X_k,\cT_{2^{i}})\leq \left(\sup_{2^i+1\leq k \leq 2^i} \lambda_k \diam(\bB_k) \right) \haut_\mathsf{T}(K_k).
	\end{align}
	Now using Lemma~\ref{gluing:lem:croissance logarithmique} in Appendix~\ref{gluing:subsec:computations}, we know that there exists a constant $C>0$ such that $\sum_{i=1}^{n}\frac{w_i}{W_i}\leq C \log n$. Combining this with equation \eqref{gluing:eq:hauteur discrete moment exponentiels} (which holds for $K_k$ because it has the same distribution as $J_k$) and Markov inequality, we get for any $u>0$,
	\begin{align*}
	\Pp{\haut_\mathsf{T}(K_n)\geq u \log n}\leq \exp\left( (C(e-1)-u) \log n\right)=n^{C(e-1)-u}.
	\end{align*}
	The last display is summable in $n$ if we choose $u$ large enough. Hence using the Borel-Cantelli lemma, we almost surely have $\haut_\mathsf{T}(K_n)\leq u\log n$ for $n$ large enough. Hence, in \eqref{gluing:eq:majoration distance Xk T2i} we have $\haut_\mathsf{T}(K_k)=(2^i)^{\petitoom{1}}$. Combining this with \eqref{gluing:diam o 1} and the upper-bound on $\lambda_n$ we get,
	\begin{align*}
	\mathrm{d_H}(\cT_{2^{i}},\cT_{2^{i+1}})\leq (2^i)^{-\alpha +\petitoom{1}}.
	\end{align*}
	Replacing $i$ by $k$ and summing the last display over $k\geq i$,
	\[\sum_{k=i}^\infty \mathrm{d_H}(\cT_{2^{k}},\cT_{2^{k+1}})\leq (2^i)^{-\alpha + \petitoom{1}},\]
	hence the sequence of compact sets $(\cT_{2^{i}})$ is a.s.\ a Cauchy sequence for Hausdorff distance between compacts of the complete space $\cT$. So the sequence $(\cT_{n})_{n\geq1}$ is also Cauchy because of the increasing property of the construction, and $\cT$ is then almost surely compact. Moreover we have, a.s.\  \[\mathrm{d_H}(\cT_{2^{i}},\cT)\leq (2^i)^{-\alpha + \petitoom{1}},\] 
	and this entails \ref{gluing:distance hausdorff tn}.
	Remark that since $\haut(\cT(\bb_n)) \leq \mathrm{d_H}(\cT_{n-1},\cT)$, this implies that a.s.\ we have
	\begin{equation}\label{gluing:hauteur sous arbres}
	\haut\left(\cT(\bb_n)\right)\leq n^{-\alpha+\petitoom{1}}.
	\end{equation}
	
	We now prove point \ref{gluing:majoration dim hausdorff}. Let $\epsilon>0$. From \eqref{gluing:hauteur sous arbres}, the collection of balls $\Ball (\brho_n, n^{-\alpha+\epsilon})$, for $n\geq N$, where $N$ is an arbitrary number, is a covering of $\cL$ whose maximal diameter tends to $0$ as $N\rightarrow\infty$. Besides, if we fix $\delta$, for $N$ large enough, and $s>\frac{1}{\alpha - \epsilon}$, we have:
	\[\cH_s^\delta(\cL)\leq \sum_{n=N}^\infty \diam(\Ball (\brho_n, n^{-\alpha+\epsilon}))^s\leq \sum_{n=N}^\infty 2^sn^{(-\alpha+\epsilon)s}\underset{N\rightarrow\infty}{\longrightarrow} 0.\]
	Hence for all such $s$, we have $\cH_s(\cL)=0$ and so $\mathrm{dim_H}(\cL)\leq \frac{1}{\alpha - \epsilon}$. Letting $\epsilon\rightarrow 0$ finishes the proof.
\end{proof}

\subsection{Upper-bound for $\alpha<1/d$ and $\beta>1$}\label{gluing:subsec:upper bound difficile}
Now let us study the specific case where the blocks satisfy Hypothesis~\ref{gluing:hypothese d} and that $\lambda_n\leq n^{-\alpha+\petito{1}}$ for some $0<\alpha<1/d$ and $w_n\leq n^{-\beta+\petito{1}}$ for some $\beta>1$. The preceding Proposition~\ref{gluing:prop:compacité+majoration dimension} still holds but it is not optimal in this specific case. As in the previous proof we construct explicit coverings of the set $\cL$ in order to bound its Hausdorff dimension. We construct them using an iterative procedure, which strongly depends on the dimension $d$ and the exponent $\beta$. Starting from the covering given in the proof of Proposition~\ref{gluing:prop:compacité+majoration dimension}, the procedure provides at each step a covering that is "better" in some sense than the preceding. In the limit, we prove the bound given in Proposition~\ref{gluing:dimension}, which explicitly depends on $\beta$ and $d$.
\begin{proposition}\label{gluing:dimension}
	Suppose $0<\alpha<\frac{1}{d}$ and $\beta>1$ and that for all $n\geq1, \ \lambda_n\leq n^{-\alpha+\petito{1}}$ and $w_n\leq n^{-\beta+\petito{1}}$. Suppose also that \ref{gluing:hypothese d}\ref{gluing:moment exponentiel 1} and \ref{gluing:hypothese d}\ref{gluing:nombre mininimal de boules} hold for some $d\geq0$. Then the Hausdorff dimension of $\cL$ almost surely satisfies:
	\[\mathrm{dim_H}(\cL)\leq \frac{2\beta-1-2\sqrt{(\beta-1)(\beta-\alpha d)}}{\alpha}.\]
\end{proposition}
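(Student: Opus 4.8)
The plan is to construct, by induction, a sequence of increasingly efficient coverings of $\cL$, where at each stage a block $\bb_n$ is covered not by a single ball of radius $\approx n^{-\alpha}$ (as in Proposition~\ref{gluing:prop:compacité+majoration dimension}) but by a family of smaller balls whose number is controlled using Hypothesis~\ref{gluing:hypothese d}\ref{gluing:nombre mininimal de boules}, together with a recursive covering of the substructures that hang off of those small balls. The key structural input is that, since $\beta>1$, the first block grafted onto $\bb_n$ has index $\approx n^\beta \gg n$; hence the substructure $\cT(\bb_n)$ has diameter essentially equal to $\diam(\bb_n)\approx n^{-\alpha}$, but all of the \emph{mass} that generates further grafting onto $\bb_n$ itself is confined to a short initial segment of indices. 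More precisely, if $S\subset\bb_n$ has diameter $r$, the substructure $\cT(S)$ hanging off of $S$ (in the notation of Section~\ref{gluing:notation}) is built from blocks $\bb_i$ with $[X_{i-1}]_n\in S$, and one shows using the Pólya-urn estimate of Lemma~\ref{gluing:urne de polya} (or a direct martingale computation) that the total $\bar\mu$-mass of such $S$ is $\approx r^{d}$ when $r\geq n^{-\alpha}$, so the indices $i$ for which $\bb_i$ is grafted onto $S$ are, roughly, a fraction $r^d$ of all indices; combining with $\diam(\cT(S))\leq \diam(S)+\sup\haut(\cT(\bb_i))$ one gets $\diam(\cT(S))\leq$ (something comparable to $r$) when $r$ dominates the heights of the grafted substructures.

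Concretely I would proceed as follows. \textbf{Step 1 (parametrisation of the coverings).} Fix a parameter and define, for each $a\geq 0$, a covering of $\cL$ obtained by: covering each $\bb_n$ (for $n\geq N$) by $N_{r_n}(\bb_n)$ balls of some radius $r_n = n^{-a}$ with $a\geq\alpha$ — this costs $\approx n^{-a\cdot d}$ balls per block by \ref{gluing:hypothese d}\ref{gluing:nombre mininimal de boules} — and then, for each such small ball $B$, recursively covering $\cT(B)$. The diameter of $\cT(B)$ is $\max(r_n, \text{heights of substructures grafted on }B)$, and the substructures grafted on $B$ are indexed by a set of size $\approx n\cdot r_n^{d} = n^{1-ad}$ starting around index $n^{\beta - ad}$-ish (this is where the precise bookkeeping matters). \textbf{Step 2 (the recursion on the exponent).} Writing $s$ for the candidate exponent such that the $s$-dimensional Hausdorff content of the covering at level $a$ is summable, one derives a relation of the form: the cost of covering $\bb_n$'s descendants at scale $a$ equals $n^{-ad}$ times (a sum over the grafted sub-blocks of their cost at the appropriate induced scale); optimising/iterating this relation produces a sequence $a_0=\alpha$, $a_1, a_2,\dots$ converging to a fixed point $a_\infty$, and correspondingly $s_k\to s_\infty$. \textbf{Step 3 (solving the fixed-point equation).} The fixed point should satisfy a quadratic equation in the relevant variable whose solution is exactly $\frac{2\beta-1-2\sqrt{(\beta-1)(\beta-\alpha d)}}{\alpha}$; equivalently, writing it as $d + \frac{(\sqrt{\beta-\alpha d}-\sqrt{\beta-1})^2}{\alpha}$ suggests the "balance" being struck is between the $d$-dimensional cost of covering each block at a fine scale and the multiplicative proliferation of substructures, with the $\sqrt{\phantom{x}}$ arising from an $x\mapsto x^2$-type self-consistency (half the exponent gained is reinvested). \textbf{Step 4 (making the limit rigorous).} Since each covering in the sequence is a genuine covering of $\cL$ with mesh tending to $0$ as $N\to\infty$, for every $s>s_k$ we get $\cH_s(\cL)=0$; letting $k\to\infty$ and then $s\downarrow s_\infty$ gives $\dim_H(\cL)\leq s_\infty$. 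Throughout, the $\petito{1}$ and $\petitoom{1}$ error terms coming from \eqref{gluing:diam o 1}, from $\lambda_n\leq n^{-\alpha+\petito{1}}$, from \ref{gluing:hypothese d}\ref{gluing:nombre mininimal de boules}, and from the fluctuations of the index of the first graft onto a given small ball, have to be collected and shown to be negligible — they do not affect the exponent, only shift it by an arbitrarily small amount that disappears in the limit.

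The main obstacle I anticipate is \textbf{Step 1}, specifically getting a clean, uniform-in-$n$ control on the geometry of $\cT(S)$ for $S$ a small ball inside $\bb_n$: one needs simultaneously (i) that the diameter of $\cT(S)$ does not exceed $r_n$ by more than lower-order terms — which requires the heights of all substructures grafted along $S$ to be dominated, and these heights are only controlled in expectation/with high probability, so a Borel–Cantelli argument over the (infinitely many) pairs $(n, S)$ is needed and the number of small balls $S$ grows with $n$; and (ii) a matching \emph{lower} bound on how the mass/index-set grafted onto $S$ scales, so that the recursion actually improves the exponent rather than stalling — this is where the atoms-vs-continuous distinction and the precise form of \eqref{gluing:controle boules} enter. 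A secondary technical nuisance is that the recursion is not literally self-similar: the induced scale at which one must cover a grafted sub-block $\bb_i$ depends on both $i$ and the scale $r_n$ at the parent level, so one has to set up the induction hypothesis carefully (e.g.\ as a statement "for all $m$ and all $S\subset\bb_m$ of diameter $\geq m^{-a_k}$, $\cT(S)$ can be covered at $s_k$-content $\leq$ something explicit") and verify it is preserved. Once the bookkeeping is organised so that the recursion closes, extracting the quadratic and its root is a routine computation, matching the formula in the statement (and the equivalent form $d+\frac{(\sqrt{\beta-\alpha d}-\sqrt{\beta-1})^2}{\alpha}$ quoted after Theorem~\ref{gluing:theoreme principal}).
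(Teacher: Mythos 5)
Your overall strategy --- iterated refinement of coverings, exploiting that for $\beta>1$ the first block grafted onto $\bb_n$ has index about $n^{\beta}\gg n$, leading to an exponent recursion whose fixed point solves a quadratic with root $\frac{2\beta-1-2\sqrt{(\beta-1)(\beta-\alpha d)}}{\alpha}$ --- is exactly the paper's (Section~\ref{gluing:subsubsec:idea of the proof, upperbound}, Definition-Proposition~\ref{gluing:def:fonctions fi} and Proposition~\ref{gluing:recouvrements}). But the way your Step 1 sets up the recursion creates the obstacles you yourself flag, and two of its ingredients are wrong or unavailable. First, Lemma~\ref{gluing:urne de polya} cannot be invoked here: it requires $\bar{\mu}$ to be concentrated on $\cL$, which fails precisely when $\beta>1$ (then $W_\infty<\infty$ and $\bar{\mu}$ is carried by $\cT^*$, Proposition~\ref{gluing:convergence of the measure mu}); moreover any estimate of the form ``the mass of a ball of radius $r$ inside $\bb_n$ is $\approx r^{d}$'' is the ball-volume control \eqref{gluing:controle boules} of \ref{gluing:hypothese d}\ref{gluing:dimension d}, which the proposition does not assume (only \ref{gluing:hypothese d}\ref{gluing:moment exponentiel 1} and \ref{gluing:hypothese d}\ref{gluing:nombre mininimal de boules} are hypotheses). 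Second, the ``matching lower bound on the mass grafted onto $S$'' that you require in (ii) is not needed at all: for an upper bound on the dimension one only needs upper bounds on covering costs, and the recursion improves because of the deterministic optimisation over the scale exponent $\gamma$, using only $w_n\leq n^{-\beta+\petito{1}}$ and $\Ec{N_r(\bB)}\leq r^{-d+\petito{1}}$.

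The missing idea is how to organise the recursion so that no uniform-in-$(n,S)$ control of $\cT(S)$ for small balls $S$ is required. The paper covers $\bb_n$ by roughly $n^{-\alpha d+\alpha\gamma d}$ balls of radius $n^{-\alpha\gamma+\epsilon}$ and splits the grafted blocks by index: those with $k>n^{\gamma}$ are automatically swallowed by these (slightly enlarged) balls thanks to the single a.s.\ uniform estimate \eqref{gluing:hauteur sous arbres}, $\haut(\cT(\bb_k))\leq k^{-\alpha+\petitoom{1}}$, so no Borel--Cantelli argument over the growing family of small balls is needed; those with $n<k\leq n^{\gamma}$ are covered by the step-$i$ covering $R^{s,\epsilon}_{k,i}$ of the whole substructure $\cT(\bb_k)$, which depends only on $k$ and not on the parent's scale, so the ``induced scale'' bookkeeping you worry about disappears and the induction closes. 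The $s$-volume is then controlled only in expectation, via $\Pp{\bb_k\rightarrow\bb_n}\leq w_n/W_{k-1}\leq n^{-\beta+\petito{1}}$ (Lemma~\ref{gluing:minilemme}), and finiteness of $\Ec{\Vol_{s_i}(R_1)}$ already yields $\cH_{s_i}(\cL)<\infty$ a.s., hence $\mathrm{dim_H}(\cL)\leq s_i\downarrow s_\infty$. Without this device (or an equivalent one), your Step 1 as written does not go through: the diameter of $\cT(S)$ for every small ball $S$ and the index of the first graft onto $S$ are not quantities you can control uniformly with the tools you cite.
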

For our purposes, we will work with countable sets of balls of $\cT$, i.e.\ sets of the form
\[R=\enstq{\Ball\left(x_i,r_i\right)}{\forall i\geq1, \ x_i\in \cT, \ r_i>0},\]
where $\Ball(x,r)$ denotes the open ball centred at $x$ with radius $r$.
Let us introduce some notation.
If $R$ is such a set of balls of $\cT$, we say that $R$ is a \emph{covering} of the subset $X\subset \cT$ if $X\subset\bigcup_{B\in R} B$. We can also define the $s$-volume of $R$ as 
\[\Vol_s(R):=\sum_{B \in R} \diam(B)^s.\]
In this way if the diameters of the balls that belong to $R$ are bounded above by some $\delta>0$, and $R$ is a covering of $X$, then $\cH_s^\delta(X)\leq \Vol_s(R)$, see Section~\ref{gluing:subsec:hausdorff dimension} in the Appendix for the definition of  $\cH_s^\delta(X)$.
Also, if $R$ and $R'$ are collections of balls and $R$ covers $X$ and $R'$ covers $X'$, then obviously $R\cup R'$ is a countable set of balls that covers $X\cup X'$ and for any $s$, we have \begin{equation}
\Vol_s(R\cup R' )\leq \Vol_s(R)+\Vol_s(R').
\end{equation}
In what follows, we construct random sets of balls and we prove that they are coverings of our set $\cL$, which allow us to prove upper-bounds on the Hausdorff dimension of $\cL$.

\subsubsection{An idea of the proof}\label{gluing:subsubsec:idea of the proof, upperbound}
We briefly explain the idea of the proof before going into technicalities. The goal will be to provide a covering of each $\cT(\bb_n)$, for all $n$ large enough. Since from the definition of $\cL$ we have for any $N\geq1$,
\begin{equation}
\cL\subset\bigcup_{n\geq N}\cT(\bb_n), \label{gluing:leaves in subtrees}
\end{equation}
then the union over all $n$ large enough of coverings of the $\cT(\bb_n)$ is indeed a covering of $\cL$.

We recall how we derived the upper-bound $\frac{1}{\alpha}$ for the Hausdorff dimension of $\cL$ in the proof of Proposition~\ref{gluing:majoration hausdorff} \ref{gluing:majoration dim hausdorff}. The idea is to consider for every $n\geq 1$, a ball of radius $n^{-\alpha+\epsilon}$, say centred at $\brho_n$. For $n$ large enough, this ball covers $\cT(\bb_n)$ by \eqref{gluing:hauteur sous arbres}. Thanks to \eqref{gluing:leaves in subtrees}, the set of balls $\enstq{\Ball (\brho_n, n^{-\alpha+\epsilon})}{n\geq N}$, for any $N\geq 1$, is a covering of $\cL$.

For $\beta\leq 1$, this covering is good because, as a block of index $n$ has relative weight $w_n/W_n$ which can be of order up to $n^{-1+\petito{1}}$ when it appears, the indices of the first blocks that are glued on $\bb_n$ can have also an index of the order of $n$, and so a height of order up to $n^{-\alpha}$.
On the contrary, if $\beta>1$, we will see that the first block to be grafted on $\bb_n$ has index roughly of order $n^\beta$, and so a height at most of order $n^{-\alpha\beta}$, which is very small compared to $n^{-\alpha}$. This gives us a hint that we can provide a "better" covering using a big number of smaller balls to cover $\bb_n$ instead of just a "big" one, see Figure \ref{gluing:fig:step 2}.  We will use this rough idea to provide an algorithm which will construct finer and finer (random) coverings. Let us fix $\beta>1$ from now on and take $s>d$, and explain informally how the algorithm works.
\subparagraph*{Goal:}
At each step $i$ of the algorithm, we want to construct for all $n\geq 1$ a set of balls $R_{n,i}^s$ such that, for $n$ large enough, this set of balls is a covering of $\cT(\bb_n)$. Such a set of balls $R_{n,i}^s$ will have an $s$-volume of roughly $n^{f_i(s)}$, say. From step to step, we try to lower the $s$-volume of the set of balls constructed by the algorithm, which corresponds to lowering this exponent $f_i(s)$. Whenever we manage to get an exponent below $-1$, we stop the algorithm. We will see that it implies that the Hausdorff dimension of $\cL$ is lower or equal to $s$.
\subparagraph*{Step 1:}
The first step of the algorithm is deterministic and corresponds to what we did in the proof of Proposition~\ref{gluing:majoration hausdorff}. For each $n$ we take a ball centred at $\brho_n$ of radius roughly $n^{-\alpha}$ (in fact $n^{-\alpha+\epsilon}$ but let us not consider these technicalities for the moment). As seen before, for $n$ large enough, it is a covering of $\cT(\bb_n)$. The $s$-volume of this covering is then of order $n^{-\alpha s}$. Denote $f_1(s)=-\alpha s$. If $f_1(s)<-1$, stop. Otherwise, proceed to step $2$.
\subparagraph*{Step 2:}
As represented in Figure \ref{gluing:fig:tbn}, decompose $\cT(\bb_n)$ as
\[\cT(\bb_n)=\bb_n \cup \bigcup_{\bb_k\rightarrow\bb_n}\cT(\bb_k).\]
\begin{figure}
	\begin{center}
		\begin{tabular}{cccc}
			\subfloat[The substructure $\cT(\bb_n)$]{\includegraphics[height=3.5cm]{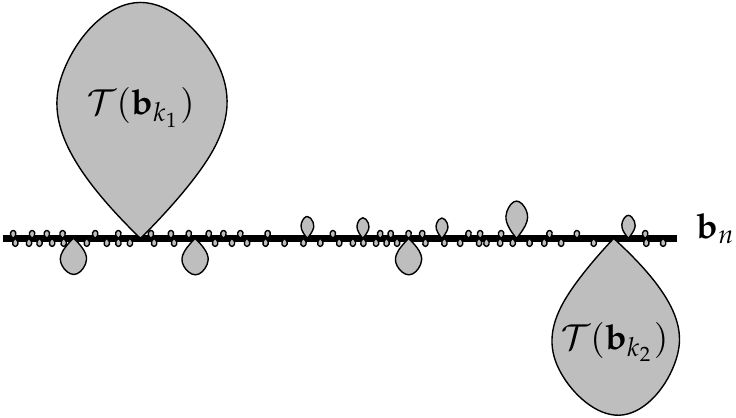}\label{gluing:fig:tbn}} & 			\subfloat[A covering of $\bb_n$ with small balls]{\includegraphics[height=3.5cm]{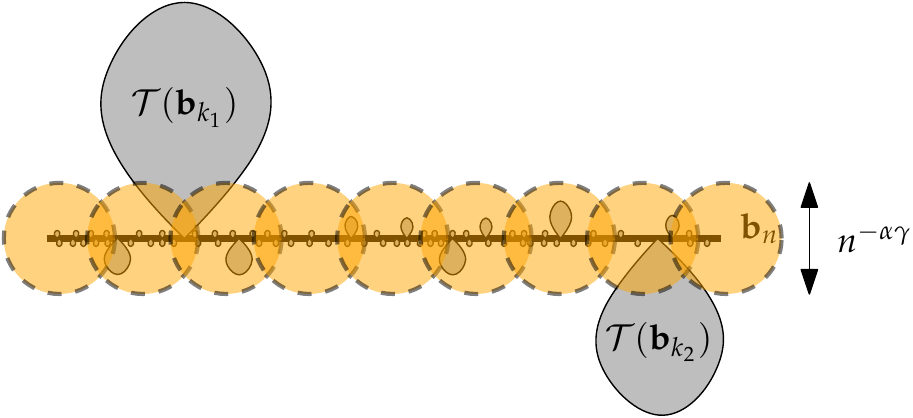}\label{gluing:fig:small balls}} \\
			\subfloat[The remaining substructures are covered using the preceding step]{\includegraphics[height=4.5cm]{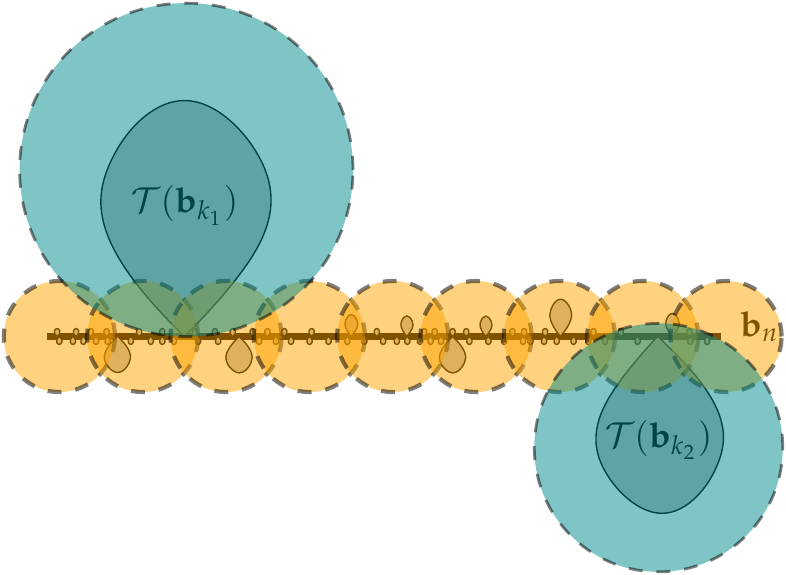}\label{gluing:fig:remaining subtrees}}
		\end{tabular}
		\caption{Explanation of Step 2 of the algorithm}\label{gluing:fig:step 2}
	\end{center}
\end{figure}
Since the first block grafted on the block $\bb_n$ has typically an index that is very large compared to $n$, we design a covering using smaller balls. We fix $\gamma>1$ and decide to cover $\bb_n$ with balls of size $n^{-\alpha\gamma}$, so that the blocks (and their descending substructure) of index $>n^{\gamma}$ are included in these balls, see Figure \ref{gluing:fig:small balls}. Since the blocks have dimension $d$, this covering uses roughly $\left(\frac{n^{-\alpha}}{n^{-\alpha\gamma}}\right)^d$ balls, each with $s$-volume $n^{-\alpha\gamma s}$. So the total volume used is around
$n^{-\alpha d + \alpha\gamma d - \alpha \gamma s}$.

But doing so, we forgot to cover the blocks $\bb_k$ such that $\bb_k\rightarrow \bb_n$ and $k\leq n^{\gamma}$. To take care of them, we use the preceding step of the algorithm and cover each of them with a ball of radius $k^{-\alpha}$, see Figure \ref{gluing:fig:remaining subtrees}. Recalling that $s\leq 1/\alpha$, we get that in expectation, these balls have a $s$-volume of order
\[\sum_{k=n+1}^{n^\gamma}\Pp{\bb_k\rightarrow\bb_n} k^{-\alpha s}\approx n^{-\beta}\sum_{k=n+1}^{n^\gamma} k^{-\alpha s}\approx n^{-\beta +\gamma(1-\alpha s)}.\]
Hence, the total $s$-volume used to cover $\cT(\bb_n)$ has order $n^{\max(-\beta +\gamma(1-\alpha s),-\alpha d + \alpha\gamma d - \alpha \gamma s)}$. Since we want to construct a covering having the smallest possible volume, we can optimize on $\gamma$ the last exponent. Under our assumptions, one can check that it is minimal if we take $\gamma:=\frac{\beta-\alpha d}{1-\alpha d}>1$. We then get 
\[\max(-\beta+\gamma(1-\alpha s),-\alpha d +\alpha\gamma d-\alpha\gamma s)=\frac{-\alpha d + \alpha \beta d - \alpha \beta s +\alpha^2 d s}{1-\alpha d}:=f_2(s).\] 
We can check that the new exponent $f_2(s)$ is smaller than $f_1(s)=-\alpha s$. Hence we can cover $\cT(\bb_n)$ with balls using a total $s$-volume of a lower order than the preceding step. If $f_2(s)<-1$, stop. Otherwise, proceed to step $3$.

\subparagraph*{Step $i$:}
Now we recursively repeat the preceding step. Thanks to step $i-1$, we know that we can provide a covering of $\cT(\bb_n)$ for any $n$, using a $s$-volume of approximately $n^{f_{i-1}(s)}$. Now we fix a number $\gamma>1$ and we cover the block $\bb_n$ with balls of radius $n^{-\alpha \gamma}$. 
As in step $2$, this covering has a $s$-volume of order $n^{-\alpha d + \alpha\gamma d - \alpha \gamma s}$. Then we take care of the $\bb_k$ such that $\bb_k\rightarrow \bb_n$ and $k<n^{\gamma}$. To cover them we use step $i-1$, which ensures that we can do that for each $k$ with a $s$-volume roughly $k^{f_{i-1}(s)}$. Hence the expectation on the $s$-volume for all these balls is, if $s$ is such that $f_{i-1}(s)\geq -1$,
\[\sum_{k=n+1}^{n^{\gamma}}\Pp{\bb_k\rightarrow\bb_n} k^{f_{i-1}(s)}\approx n^{-\beta}\sum_{k=n+1}^{n^{\gamma}} k^{f_{i-1}(s)}\approx n^{-\beta +\gamma(1+f_{i-1}(s))}.\]
We then choose the optimal $\gamma>1$ that minimizes the maximum of the exponents 
\[\max(-\alpha d + \alpha\gamma d - \alpha \gamma s,-\beta +\gamma(1+f_{i-1}(s))).\]
We denote $\gamma_i(s)$ the value for which the minimum is obtained, which depends on $s$.
The first exponent is linearly decreasing with $\gamma$, the other one is linearly increasing, and their value for $\gamma$ tending to $1$, satisfy $-\alpha s> -\beta+ 1+f_{i-1}(s)$. Hence, the value of $\gamma_i(s)$ is the value for which the two of them are equal, and this value is strictly greater than $1$. We call this minimal exponent $f_i(s)$. If $f_i(s)<-1$, stop. Otherwise, proceed to step $i+1$.

\subparagraph*{Upper-bound on Hausdorff dimension}
Now, suppose $s$ is such that $f_i(s)$ is well-defined and $f_i(s)<-1$, for some $i\geq1$. If we cover every $\cT(\bb_n)$ using the covering provided by step $i$ of the algorithm, then the union of all those coverings covers $\cL$. Furthermore, we only need to cover all the $\cT(\bb_n)$ for $n$ sufficiently large to cover $\cL$, so we can have a covering of $\cL$ using arbitrarily small balls. Hence we get that for all $\delta>0$, we have $\cH_s^\delta(\cL)<\sum_{n=1}^\infty n^{f_i(s)}<\infty$ and so $\cH_s(\cL)<\infty$, which proves that \[\mathrm{dim_H}(\cL)\leq s.\]
This rough analysis is turned into a rigorous proof in what follows. We begin with elementary definitions and calculations that arise from what precedes.

\subsubsection{Study of a sequence of functions}
We begin by defining recursively the sequence of functions $(f_i)_{i\geq1}$, together with a sequence $(s_i)_{i\geq1}$ of real numbers.
\begin{definition-proposition}\label{gluing:def:fonctions fi}
	We set $s_0:=\infty$. We define a sequence $(f_i)_{i\geq1}$ of functions as follows. We set 
	\[\forall s\in\intervallefo{d}{\infty},\quad f_1(s):=-\alpha s,\]
	and set $s_1:=\frac{1}{\alpha}$. Then for all $i\geq 1$, we recursively define:
	\[\forall s\in\intervalleff{d}{s_{i}}, \quad f_{i+1}(s):=\frac{\alpha(-d+\beta d-\beta s - f_i(s) d)}{1+f_i(s)+\alpha s-\alpha d}.\]
	Define $s_{i+1}$ as the unique solution to the equation $f_{i+1}(s)=-1$.
\end{definition-proposition}
Before proving the validity of this definition, let us state some properties of this sequence of functions.
\begin{proposition}\label{gluing:prop des fi}
	The following properties are satisfied:
	\begin{enumerate}
		\item  For all $i\geq1$, the function $f_i$ is continuous, strictly decreasing, and $f_i(d)=-\alpha d$.\label{gluing:continue}
		\item\label{gluing:decroissance}For all $i\geq 1$, for all $s\in\intervalleof{d}{s_{i}}$, we have $ f_{i+1}(s)<f_{i}(s)$.
		\item\label{gluing:convergence} Let $s_\infty:=\frac{2\beta-1-2\sqrt{(\beta-1)(\beta-\alpha d)}}{\alpha}$. Then we have for all $s\in\intervallefo{d}{s_\infty}$, \[\quad f_i(s)\tend{i\rightarrow\infty}{}f_\infty(s),\] where
		\[f_\infty(s)=\frac{-(1+\alpha s)+\sqrt{1+2\alpha s +\alpha^2s^2-4\alpha d+ 4\alpha\beta d-4\alpha\beta s}}{2}.\]
		\item For all $s\in\intervallefo{d}{s_\infty}$, we have $f_\infty(s)>-1.$ \label{gluing:limite}
		\item \label{gluing:convergence s_i}The sequence $(s_i)_{i\geq1}$ is strictly decreasing and
		\[s_i\tend{i\rightarrow\infty}{}s_\infty. \]
		\item For all $i\geq 1$, we have $f_{i+1}(s_{i})<-1.$\label{gluing:sis}
	\end{enumerate}
\end{proposition}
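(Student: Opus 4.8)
The statement bundles together the validity of Definition--Proposition~\ref{gluing:def:fonctions fi} with six properties, so the natural strategy is to prove everything simultaneously by induction on $i$, carrying forward a package of invariants. The invariants I would maintain are: (a) $f_i$ is well-defined, continuous and strictly decreasing on $\intervalleff{d}{s_{i-1}}$ with $f_i(d)=-\alpha d$; (b) $s_i$ is well-defined by $f_i(s_i)=-1$ and lies in $\intervalleoo{d}{s_{i-1}}$ (so in particular $s_i<s_{i-1}$, giving property \ref{gluing:convergence s_i} except the limit); (c) $f_{i+1}<f_i$ on $\intervalleof{d}{s_i}$, which is property \ref{gluing:decroissance}; and (d) $f_{i+1}(s_i)<-1$, which is property \ref{gluing:sis}. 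The base case $i=1$ is the explicit affine function $f_1(s)=-\alpha s$, for which $f_1(d)=-\alpha d$ and $s_1=1/\alpha$ trivially satisfy everything, and one checks $d<1/\alpha$ since $\alpha<1/d$.

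\textbf{Key computations.} For the inductive step I would first rewrite the recursion $f_{i+1}(s)=\frac{\alpha(-d+\beta d-\beta s-f_i(s)d)}{1+f_i(s)+\alpha s-\alpha d}$ in a more transparent form: the denominator is $D_i(s):=1+f_i(s)+\alpha s-\alpha d$. Since $f_i(d)=-\alpha d$ we get $D_i(d)=1-2\alpha d$, and on the relevant range one must check $D_i(s)>0$; this is where the condition $f_i(s)\ge -1$ on $\intervalleff{d}{s_i}$ combined with $\alpha s\ge\alpha d$ is used — more precisely $D_i(s)=(1+f_i(s))+\alpha(s-d)\ge 0$ with equality only possible at the endpoint, so $D_i>0$ on $\intervallefo{d}{s_i}$ and in fact on the closed interval since $1+f_i(s_i)=0$ but $\alpha(s_i-d)>0$. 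Then: $f_{i+1}(d)=\frac{\alpha(-d+\beta d+\alpha d^2)}{1-2\alpha d}=\frac{\alpha d(-1+\beta+\alpha d)}{1-2\alpha d}$; a short manipulation should give this equals $-\alpha d$ (equivalently $\alpha d(-1+\beta+\alpha d)=-\alpha d(1-2\alpha d)$, i.e. $-1+\beta+\alpha d=-1+2\alpha d$, i.e. $\beta=\alpha d$ — hmm, that is false, so in fact the cleaner route is to verify $f_{i+1}(d)=-\alpha d$ directly from the recursion by substituting $f_i(d)=-\alpha d$ and simplifying, and I would carry out that one-line algebra rather than trust the shortcut above). For monotonicity of $f_{i+1}$: write $f_{i+1}=\alpha\cdot\frac{N_i}{D_i}$ with $N_i(s)=-d+\beta d-\beta s-d f_i(s)$; since $f_i$ is decreasing, $N_i'=-\beta-d f_i'$ has sign depending on $s$, so the quotient rule alone is not decisive and I expect to need the sign of $N_i'D_i-N_iD_i'$ — this is the place where I would expend real computational effort, likely using $f_i'<0$ and the fact that $f_i$ and $s$ are comparable in size to pin down signs, or alternatively reparametrising via $u=1+f_i(s)$.

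\textbf{The fixed point and convergence.} For properties \ref{gluing:convergence}, \ref{gluing:limite}, \ref{gluing:convergence s_i}: the map $f\mapsto T(f)$ induced by the recursion is, at a fixed value of $s$, a Möbius transformation $t\mapsto\frac{\alpha(-d+\beta d-\beta s-td)}{1+t+\alpha s-\alpha d}$ in the variable $t=f_i(s)$. Its fixed points solve a quadratic; solving $t(1+t+\alpha s-\alpha d)=\alpha(-d+\beta d-\beta s-td)$ gives $t^2+(1+\alpha s)t+\alpha d-\alpha\beta d+\alpha\beta s=0$ (after moving the $-\alpha d t$ term to the left), whose roots are $\frac{-(1+\alpha s)\pm\sqrt{(1+\alpha s)^2-4(\alpha d-\alpha\beta d+\alpha\beta s)}}{2}$ — exactly the claimed $f_\infty(s)$ for the $+$ root. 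The discriminant is nonnegative precisely when $s<s_\infty$ (and $s_\infty$ is itself the value where the discriminant of the relevant quadratic in $s$ vanishes, which explains the formula $s_\infty=\frac{2\beta-1-2\sqrt{(\beta-1)(\beta-\alpha d)}}{\alpha}$). To get actual convergence $f_i(s)\to f_\infty(s)$, I would argue that $f_\infty(s)$ is the \emph{attracting} fixed point: compute $|T'(f_\infty(s))|$ and show it is $<1$ on $\intervallefo{d}{s_\infty}$, or — cleaner — show by induction using property \ref{gluing:decroissance} that $(f_i(s))_i$ is decreasing and bounded below by $f_\infty(s)$ (the larger root), hence convergent, and the limit must be a fixed point $\le f_\infty(s)$, forcing it to equal $f_\infty(s)$. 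Property \ref{gluing:limite}, $f_\infty(s)>-1$ on $\intervallefo{d}{s_\infty}$, follows by plugging $t=-1$ into the quadratic: $1-(1+\alpha s)+\alpha d-\alpha\beta d+\alpha\beta s=\alpha s-\alpha s+\alpha d-\alpha\beta d+\alpha\beta s$... I would compute this value, note it equals $\alpha(\beta-1)(s-d)\ge 0$ hence the quadratic is $\le 0$ at $t=-1$... wait, that puts $-1$ between the roots, so actually one needs to check which side — since the parabola opens upward and is $\le 0$ at $t=-1$, we have $f_\infty(s)=$ (larger root) $\ge -1$, with strict inequality for $s>d$; and at $s=d$ one checks directly. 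Finally $s_i\to s_\infty$ follows from $f_i\to f_\infty$ uniformly on compacts together with $f_\infty(s_\infty)=-1$ (the discriminant vanishes there so the double root is $-(1+\alpha s_\infty)/2$, which one verifies equals $-1$, i.e. $\alpha s_\infty=1$ — no; rather $s_\infty$ is defined so that this holds, and consistency is the check), plus monotonicity of each $f_i$ to transfer convergence of functions to convergence of their $-1$-level sets via a standard squeeze argument. The main obstacle I anticipate is not any single item but the bookkeeping: keeping the nested intervals $\intervalleff{d}{s_i}$, the sign of $D_i$, and the monotonicity of $f_{i+1}$ all consistent through the induction, and in particular the monotonicity step, which requires genuine (though elementary) manipulation of the quotient rather than a soft argument.
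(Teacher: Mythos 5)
Your overall architecture is the paper's: induction driven by the two\,-variable map $F(s,x)=\frac{\alpha(-d+\beta d-\beta s-dx)}{1+x+\alpha s-\alpha d}$, monotonicity of $F(s,\cdot)$, the fixed-point quadratic $t^2+(1+\alpha s)t+\alpha(d-\beta d+\beta s)=0$, and monotone convergence of the iterates to the larger root. Items (i), (ii), (iii), (vi) would go through along your lines (for the monotonicity-in-$s$ step you defer, the clean route is the paper's implicit one: $\partial_xF>0$ together with $\partial_sF=\frac{\alpha(1+x)(\alpha d-\beta)}{(1+x+\alpha s-\alpha d)^2}\le0$ on the region $x\ge-1$, so $s\mapsto F(s,f_i(s))$ is strictly decreasing; and your slips $D_i(d)=1-2\alpha d$ — it is $1-\alpha d$ — and the aborted shortcut for $f_{i+1}(d)$ are harmless, since direct substitution of $f_i(d)=-\alpha d$ gives $\alpha\cdot\frac{-d(1-\alpha d)}{1-\alpha d}=-\alpha d$; for (iii) you must also check the base case $f_1(s)=-\alpha s\ge f_\infty(s)$, which holds because the quadratic at $t=-\alpha s$ equals $\alpha(\beta-1)(s-d)\ge0$ and $-\alpha s$ lies right of the vertex $-(1+\alpha s)/2$ since $\alpha s<1$). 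The genuine gap is in item (v). Your mechanism rests on $f_\infty(s_\infty)=-1$, which is false: $s_\infty$ is where the discriminant vanishes, and the double root there is $-(1+\alpha s_\infty)/2=\sqrt{(\beta-1)(\beta-\alpha d)}-\beta$, which is strictly greater than $-1$ precisely because $\alpha d<1$. So $f_\infty>-1$ on all of $\intervalleff{d}{s_\infty}$ and its $-1$-level set is empty; moreover every $s_i$ lies strictly to the right of $s_\infty$ (since $f_i(s_\infty)\ge f_\infty(s_\infty)>-1$ and $f_i$ is decreasing), i.e.\ in the region where no limit function exists and where $f_i(s)$ need not even be defined for all $i$. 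Hence uniform convergence on $\intervalleff{d}{s_\infty}$ plus a level-set squeeze cannot exclude $\lim_i s_i>s_\infty$. The missing ingredient is an argument at fixed $s>s_\infty$: there $F(s,\cdot)$ has no fixed point, so if $f_i(s)$ stayed $\ge-1$ for all $i$, the decreasing sequence $f_i(s)$ would converge to a fixed point — contradiction; thus $s_i<s$ for $i$ large, giving $\lim_i s_i\le s_\infty$, and item (iv) rules out a limit strictly below $s_\infty$.

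Item (iv) as you wrote it is also wrong, though fixable. Evaluating the quadratic at $t=-1$ gives $1-(1+\alpha s)+\alpha d-\alpha\beta d+\alpha\beta s=\alpha(\beta-1)(s-d)\ge0$, not $\le0$, so $-1$ is \emph{not} between the roots and the inference "parabola $\le0$ at $-1$, hence the larger root exceeds $-1$" collapses. With the correct sign you must additionally locate $-1$ to the left of the vertex $-(1+\alpha s)/2$ (true since $\alpha s<1$), which puts $-1$ strictly below \emph{both} roots for $s>d$, and at $s=d$ the roots are $-1$ and $-\alpha d$ so the larger one is $-\alpha d>-1$; alternatively argue as the paper does, via $f_\infty(s)\ge f_\infty(s_\infty)=\sqrt{(\beta-1)(\beta-\alpha d)}-\beta>-1$. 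With these two repairs — the no-fixed-point argument beyond $s_\infty$ for (v) and the corrected sign plus vertex location for (iv) — your plan coincides with the paper's proof.
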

\begin{proof} 
	We define the function $F$ on the set $\enstq{(s,x)\in\R^2}{d\leq s \leq \frac{1}{\alpha}, \ x>\alpha d -\alpha s-1}$ by the expression:
	\[F(s,x)=\frac{\alpha(-d+\beta d -\beta s - d x)}{1+x+\alpha s -\alpha d}.\]
	We have for all $ s>d$ and all $x>\alpha d -\alpha s-1,$ \[ \partial_x F(s,x)=\frac{\alpha(\beta-\alpha d)(s-d)}{(1+x+\alpha s -\alpha d)^2}>0.\] This shows that for all  $s>d$, the function $F(s,\cdot)$ is strictly increasing, and also strictly concave since the derivative is strictly decreasing.\\
	From these facts we can show by induction on $i$ the points \ref{gluing:continue} and \ref{gluing:decroissance} of Proposition~\ref{gluing:prop des fi}, together with the validity of the definition of $f_i$ and $s_i$, in Definition-Proposition~\ref{gluing:def:fonctions fi}. 
	
	$\bullet$ For $i=1$, the function $f_1$ is well-defined, $s_1$ is indeed the unique solution to $f_1(s)=-1$ and the point \ref{gluing:continue} is satisfied. Moreover, $f_2(s)$ is well-defined for $s\in \intervalleff{d}{s_1}$ by $f_2(s)=F(s,-\alpha s)$ and for all $s\in\intervalleoo{d}{s_1}$, we have \[F(s,-\alpha s)+\alpha s=\frac{\alpha(\beta-1)(d-s)}{1-\alpha d}<0,\]
	which proves that \ref{gluing:decroissance} holds for $i=1$.
	
	$\bullet$ By induction, if $f_i$ and $s_i$ are defined up to some $i\geq1$ and satisfy \ref{gluing:continue}, then one can verify that for all $s\in\intervalleff{d}{s_{i}}$, the function $f_{i+1}$ is well-defined by the formula:
	\[f_{i+1}(s)=F(s,f_{i}(s)).\]
	From the monotonicity of $F(s,\cdot)$ and $f_i$, this function is continuous and strictly decreasing. One can check that $F(d,x)=-\alpha d$ for any $x>-1$ so $f_{i+1}$ satisfies \ref{gluing:continue}. Then, if $i=1$, the initialisation already gives us that \ref{gluing:decroissance} holds. Otherwise, if $i\geq2$, then using the induction hypothesis, for all $ s\in\intervalleof{d}{s_{i-1}}$ we have $f_{i}(s)<f_{i-1}(s)$. Using that $F(s,\cdot)$ is strictly increasing for $s>d$ we get that for all $s\in\intervalleof{d}{s_{i}}, \ f_{i+1}(s)<f_{i}(s)$, and so \ref{gluing:decroissance} holds. Since $f_{i+1}$ is continuous and strictly decreasing and that $f_{i+1}(d)>-1$ and $f_{i+1}(s_i)<f_i(s_i)=-1$, then $s_{i+1}$ is well-defined. This finishes our proof by induction.
	
	Let us study at fixed $s>d$ the equation $F(s,x)=x$. We get the following second order equation:
	\[x^2+x(1+\alpha s)+(\alpha d-\alpha\beta d+\alpha\beta s)=0,\]
	for which the discriminant is $\Delta_s=1+2\alpha s +\alpha^2s^2-4\alpha d +4\alpha\beta d-4\alpha\beta s$. We can evaluate this quantity at $d$ and at $\frac{1}{\alpha}$. We get
	\[\Delta_{d}=(\alpha d -1)^2>0 \qquad \text{and} \qquad \Delta_{1/\alpha}=4(\beta-1)(\alpha d -1)<0.\]
	We can check that it vanishes exactly at $s=s_\infty$ so that in the end, $\Delta_s$ is strictly positive on $\intervallefo{d}{s_\infty}$, null at $s_\infty$ and strictly negative on $\intervalleof{s_\infty}{\frac{1}{\alpha}}$. Hence, the function $F(s,\cdot)$ has $2$ (resp. $1$, resp. $0$) fixed points on the corresponding intervals.
	
	The convergence \ref{gluing:convergence} is a consequence of the fact that for $s\in\intervallefo{d}{s_\infty}$, the function $F(s,\cdot)$ is strictly increasing and concave, has exactly two fixed points and that the initial value $f_1(s)$ is greater than the smallest fixed point. We then have a convergence of the sequence $f_i(s)$ towards the greatest fixed point of $F(s,\cdot)$, the value of which can be computed using the equation above. The property of the limit \ref{gluing:limite} can be checked by proving that for all $ s\in\intervalleff{d}{s_\infty},$ we have $f_\infty(s)\geq f_\infty(s_\infty)=\sqrt{(\beta-1)(\beta-\alpha d)}-\beta>-1$.
	
	Let us prove the point \ref{gluing:convergence s_i}. According to property \ref{gluing:decroissance}, we have $f_i(s_{i+1})>f_{i+1}(s_{i+1})=-1$, and since $f_i$ is decreasing, we get $s_{i+1}<s_i$. Hence the sequence $(s_i)_{i\geq 1}$ is strictly decreasing, bounded below by $d$, so it converges. Now let $s>s_\infty$. If the sequence $(f_i(s))_{i\geq1}$ was well-defined for all $i\geq 1$, then for all $i\geq 1$ we would have $f_i(s)>-1$, so it would be decreasing, bounded below, hence it would have a limit, which would be a fixed point of $F(s,\cdot)$. It is impossible since $F(s,\cdot)$ has no fixed point, so the sequence is not well-defined for all $i\geq 1$ and so for $i$ large enough, $s>s_i$. We conclude that $\lim_{i\rightarrow\infty}s_i\leq s_\infty$. If we had $\lim_{i\rightarrow\infty}s_i<s_\infty$, then it would contradict the property \ref{gluing:limite}. In the end, $\lim_{i\rightarrow\infty}s_i= s_\infty$.
	
	The last property \ref{gluing:sis} follows from property \ref{gluing:decroissance}. Indeed, we have $f_{i+1}(s_{i})<f_{i}(s_{i})=-1$.
\end{proof}

\subsubsection{Construction of the coverings}
Let us provide a rigorous proof of our upper-bound, which follows the heuristics that we derived in the beginning of the section. Here, we distinguish two types of negligible functions, $\petiton$ and $\petitoe$. A function denoted $\petiton$ (resp. $\petitoe$) is negligible as $n\rightarrow\infty$ (resp. as $\epsilon\rightarrow 0$) and does not depend on $\epsilon$ (resp. on $n$).  
\begin{proposition}\label{gluing:recouvrements}
	Fix $ i\geq 1$ and $s\leq s_{i-1}$. For all $\epsilon>0$, we can construct simultaneously for all $n\geq 1$ a set of balls $R_{n,i}^{s,\epsilon}$, such that the following holds.
	\begin{enumerate}
		\item Almost surely, for $n$ large enough, $R_{n,i}^{s,\epsilon}$ covers $\cT(\bb_n)$.
		\item We have \[\Ec{\Vol_s(R_{n,i}^{s,\epsilon})}\leq n^{f_i(s)+\petiton+\petitoe}.\] 
		\item The diameter of the balls used are such that $\underset{B\in R_{n,i}^{s,\epsilon}}{\max}\diam B\tend{n\rightarrow\infty}{}0$.
	\end{enumerate} 
\end{proposition}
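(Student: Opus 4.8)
The plan is to prove Proposition~\ref{gluing:recouvrements} by induction on $i$, turning the recursive covering scheme of Section~\ref{gluing:subsubsec:idea of the proof, upperbound} into a rigorous argument. Throughout, any function written $\petiton$ (resp.\ $\petitoe$) is allowed to depend on $i$, on $s$ and on the law of the blocks but never on $\epsilon$ (resp.\ on $n$), may be taken nonnegative, and finitely many such functions at a fixed level combine into one. For $i=1$, where $s_0=\infty$, the statement is exactly the covering used in the proof of Proposition~\ref{gluing:prop:compacité+majoration dimension}\ref{gluing:majoration dim hausdorff}: I would take $R_{n,1}^{s,\epsilon}=\{\Ball(\brho_n,n^{-\alpha+\epsilon})\}$, which by \eqref{gluing:hauteur sous arbres} (applicable since \ref{gluing:hypothese d}\ref{gluing:moment exponentiel 1} lets us invoke Proposition~\ref{gluing:prop:compacité+majoration dimension}) covers $\cT(\bb_n)$ for $n$ large, has $s$-volume $(2n^{-\alpha+\epsilon})^s=n^{f_1(s)+\petiton+\petitoe}$, and has vanishing diameter.

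For the inductive step, assume the statement at level $i$, fix $s\in[d,s_i]$ and a small $\epsilon>0$; since $f_{i+1}(d)=f_1(d)=-\alpha d$ by Proposition~\ref{gluing:prop des fi}\ref{gluing:continue} one may take $R_{n,i+1}^{d,\epsilon}=R_{n,1}^{d,\epsilon}$, so assume $s>d$. I would put $\gamma:=(\beta-\alpha d)/(1+f_i(s)+\alpha s-\alpha d)$; from $f_i(s)<f_1(s)=-\alpha s<\beta-1-\alpha s$ (Proposition~\ref{gluing:prop des fi}\ref{gluing:decroissance} and $\beta>1$) one checks that $\gamma>1$, that $\gamma$ is the crossing point of the affine maps $u\mapsto\alpha(u-1)d-\alpha us$ and $u\mapsto-\beta+u(1+f_i(s))$, and — by a direct computation matching the recursion of Definition-Proposition~\ref{gluing:def:fonctions fi} — that their common value at $u=\gamma$ is $f_{i+1}(s)$; set also $\gamma':=\gamma-\epsilon/(2\alpha)\in(1,\gamma)$. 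Using the decomposition $\cT(\bb_n)=\bb_n\cup\bigcup_{\bb_k\to\bb_n}\cT(\bb_k)$, I would let $R_{n,i+1}^{s,\epsilon}:=A_n\cup B_n$, where $A_n$ is obtained by doubling the radii of a minimal covering of $\bb_n$ by balls of radius $n^{-\alpha\gamma+\epsilon}$, and $B_n$ is the union of the sets $R_{k,i}^{s,\epsilon}$ (from the induction hypothesis) over those $k$ with $\bb_k\to\bb_n$ and $n<k<n^{\gamma'}$. This is a covering of $\cT(\bb_n)$ for $n$ large a.s.: $A_n$ covers $\bb_n$, and if $\bb_k\to\bb_n$ with $k\ge n^{\gamma'}$ then $\haut(\cT(\bb_k))\le k^{-\alpha+\petitoom{1}}\le n^{-\alpha\gamma+\epsilon}$ by \eqref{gluing:hauteur sous arbres}, so $\cT(\bb_k)$, being rooted at a point of $\bb_n$ within $n^{-\alpha\gamma+\epsilon}$ of some centre of $A_n$, lies in the corresponding radius-$2n^{-\alpha\gamma+\epsilon}$ ball; the remaining pieces $\cT(\bb_k)$ with $n<k<n^{\gamma'}$ are covered by $B_n$ through the induction hypothesis, which applies because each such $k$ exceeds $n$. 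The maximal diameter in $R_{n,i+1}^{s,\epsilon}$ tends to $0$ since $n^{-\alpha\gamma+\epsilon}\to0$ and $\sup_{k>n}\max_{B\in R_{k,i}^{s,\epsilon}}\diam B\to0$.

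For the volume estimate I would treat $A_n$ and $B_n$ separately. A covering of $\bb_n$ by balls of radius $n^{-\alpha\gamma+\epsilon}$ corresponds to one of $\bB_n$ by balls of radius $n^{-\alpha\gamma+\epsilon}/\lambda_n\ge n^{-\alpha(\gamma-1)+\epsilon-\petiton}$, so \ref{gluing:hypothese d}\ref{gluing:nombre mininimal de boules} bounds the expected size of a minimal one by $n^{\alpha(\gamma-1)d-\epsilon d+\petiton}$; multiplying by $(4n^{-\alpha\gamma+\epsilon})^s$ gives $\Ec{\Vol_s(A_n)}\le n^{\alpha(\gamma-1)d-\alpha\gamma s+\petiton+\petitoe}$. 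For $B_n$ the key identity is $\Ec{\ind{\bb_k\to\bb_n}\Vol_s(R_{k,i}^{s,\epsilon})}=\frac{w_n}{W_{k-1}}\Ec{\Vol_s(R_{k,i}^{s,\epsilon})}$: the genealogical tree $\mathsf{T}$, which carries the events $\{\bb_k\to\bb_n\}=\{K_{k-1}=n\}$, is generated by independent choices with deterministic laws, hence is independent of the blocks and of the positions of the grafting points, whereas $R_{k,i}^{s,\epsilon}$ is by its recursive construction a function only of $\bB_m$, $K_m$ and the position of $X_m$ on $\bB_{K_m}$ for $m\ge k$, in particular independent of $K_{k-1}$. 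Summing over $n<k<n^{\gamma'}$, using the induction hypothesis, $w_n\le n^{-\beta+\petiton}$, $W_{k-1}\ge w_1>0$, and $1+f_i(s)\ge0$ for $s\le s_i$ — so that $\sum_{n<k<n^{\gamma'}}k^{f_i(s)+\petiton+\petitoe}\le n^{\gamma(1+f_i(s))+\petiton+\petitoe}$ (an extra logarithm when $s=s_i$, harmless) — yields $\Ec{\Vol_s(B_n)}\le n^{-\beta+\gamma(1+f_i(s))+\petiton+\petitoe}$. Since $\gamma$ was chosen to equalise the two exponents to the common value $f_{i+1}(s)$, adding the two bounds gives $\Ec{\Vol_s(R_{n,i+1}^{s,\epsilon})}\le n^{f_{i+1}(s)+\petiton+\petitoe}$, which closes the induction.

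The construction is dictated by the heuristic, so the difficulty is bookkeeping rather than a new idea. The main obstacle will be to keep the two scales of negligible functions strictly separate through the $i$ nested applications, to ensure that the random ``$n$ large enough'' thresholds accumulated over the $i$ levels remain almost surely finite, and — most delicately — to justify the $B_n$ independence identity by verifying that $R_{k,i}^{s,\epsilon}$, built from a covering of $\bb_k$ and, recursively, from the level-$(i-1)$ coverings attached to it, genuinely depends only on the data indexed by $\{m\ge k\}$ and is therefore independent of $\ind{K_{k-1}=n}$. The hypotheses enter precisely in the estimate for $A_n$: \ref{gluing:hypothese d}\ref{gluing:moment exponentiel 1} through \eqref{gluing:hauteur sous arbres}, and \ref{gluing:hypothese d}\ref{gluing:nombre mininimal de boules} — an expectation bound valid only as the radius tends to $0$ — applied to the shrinking radius $n^{-\alpha(\gamma-1)+\epsilon}$, which is why having $\gamma>1$ strictly is essential.
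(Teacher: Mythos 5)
Your proof is correct and follows essentially the same route as the paper: the same induction on $i$, the same two-part covering of $\cT(\bb_n)$ (small balls covering $\bb_n$ controlled via Hypothesis~\ref{gluing:hypothese d}\ref{gluing:nombre mininimal de boules} together with the level-$i$ coverings of the directly grafted blocks of moderate index), the same independence step that underlies Lemma~\ref{gluing:minilemme}, and the same choice $\gamma_{i+1}(s)=\frac{\beta-\alpha d}{1+f_i(s)+\alpha s-\alpha d}$ equalising the two exponents to $f_{i+1}(s)$. The only deviations are bookkeeping: you absorb the high-index descendants by doubling the radii and lowering the index threshold to $n^{\gamma-\epsilon/(2\alpha)}$, whereas the paper keeps the threshold $n^{\gamma_{i+1}(s)}$ and uses the $\epsilon$-slack in the radius $n^{-\alpha\gamma_{i+1}(s)+\epsilon}$ around a net built through Lemmas~\ref{gluing:majoration M_r} and \ref{gluing:lem:random element of prb} (which also settles the measurable choice of the covering that your ``minimal covering'' takes for granted).
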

We will define the set of balls $R_{n,i}^{s,\epsilon}$ over the block $\bb_n$ and its descendants in an algorithmic way, and each step of the algorithm only depends on the gluings that happen after time $n$.
The proof of the upper-bound will directly follow from this proposition. Let us first state an elementary result, the proof of which is left to the reader. Note that we allow the function $\petitoe$ to be infinite for large values of $\epsilon$.
\begin{lemma}\label{gluing:minilemme}
	Let $\xi\geq -1$. Then for all $\gamma>1$, we have:
	\[\Ec{\sum_{k=n+1}^{n^\gamma} \ind{\bb_k\rightarrow \bb_n}k^{\xi+\petiton+\petitoe}}\leq  n^{-\beta+\gamma(\xi+1)+\petiton+\petitoe}.\]
\end{lemma}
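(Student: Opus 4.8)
The plan is to compute the expectation directly, using that the events $\{\bb_k\rightarrow\bb_n\}$ are controlled by the weights. Recall from Section~\ref{gluing:notation} that conditionally on the block structure, $\bb_{k+1}$ is grafted onto $\bb_j$ with probability $w_j/W_k$, and that grafting choices at distinct times are independent. For a fixed pair $k>n$, the event $\{\bb_k\rightarrow\bb_n\}$ is exactly $\{K_{k-1}=n\}$, which has probability $w_n/W_{k-1}$. Under the hypotheses in force (either Hyp.~\ref{gluing:condition beta grand} or the standing assumptions of Proposition~\ref{gluing:dimension}), we have $w_n\leq n^{-\beta+\petito{1}}$ and, since $W_m\geq w_1>0$ is bounded below, $W_{k-1}\geq c>0$; more usefully, for the range $k\leq n^\gamma$ we only need $w_n/W_{k-1}\leq n^{-\beta+\petiton}$. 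Hence
\[
\Ec{\ind{\bb_k\rightarrow\bb_n}}=\Pp{K_{k-1}=n}=\frac{w_n}{W_{k-1}}\leq n^{-\beta+\petiton}.
\]

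First I would peel off the deterministic factor $k^{\xi+\petiton+\petitoe}$: for $k\in\intervalleentier{n+1}{n^\gamma}$ and $\xi\geq -1$ we can bound $k^{\xi+\petiton+\petitoe}\leq (n^\gamma)^{\xi+\petiton+\petitoe}=n^{\gamma\xi+\gamma(\petiton+\petitoe)}$ (using $\xi+\petiton+\petitoe\geq 0$ eventually, which holds since $\xi\geq -1>-1-\petiton$... more carefully, if $\xi\geq -1$ then $k^{\xi}\leq k^{\xi}\vee 1$ and on $[n+1,n^\gamma]$ this is at most $n^{\gamma(\xi+1)}\cdot n^{-1}$ up to the negligible corrections — the cleanest route is $k^{\xi}\leq n^{-\xi_-}n^{\gamma\xi_+}$ type bookkeeping). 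Then I would use the uniform bound on $\Ec{\ind{\bb_k\rightarrow\bb_n}}$ and sum over the at most $n^\gamma$ values of $k$:
\[
\Ec{\sum_{k=n+1}^{n^\gamma}\ind{\bb_k\rightarrow\bb_n}k^{\xi+\petiton+\petitoe}}
\leq \sum_{k=n+1}^{n^\gamma} n^{-\beta+\petiton}\cdot k^{\xi+\petiton+\petitoe}
\leq n^{-\beta+\petiton}\sum_{k=n+1}^{n^\gamma} k^{\xi+\petiton+\petitoe}.
\]
The remaining sum $\sum_{k=n+1}^{n^\gamma}k^{\xi+\petiton+\petitoe}$ is a partial sum of a power series: since $\xi\geq-1$, the exponent $\xi+\petiton+\petitoe$ is $\geq -1+\petiton+\petitoe$, the sum is dominated by its last term times the number of terms (up to a logarithmic factor when $\xi$ is near $-1$, absorbed in $\petiton$), giving $\sum_{k=n+1}^{n^\gamma}k^{\xi}\leq n^{\gamma(\xi+1)+\petiton}$. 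Multiplying, the total is at most $n^{-\beta+\gamma(\xi+1)+\petiton+\petitoe}$, which is the claim. The hypothesis $\xi\geq -1$ is exactly what is needed to guarantee $\gamma(\xi+1)\geq 0$ so that going up to $n^\gamma$ genuinely dominates; and one records that the bound may be vacuous (the $\petitoe$ term infinite) when the correction exponent in $w_n\leq n^{-\beta+\petitoe}$ is too large, which is why the statement permits $\petitoe$ to be infinite.

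The only mild subtlety — and the one place to be careful rather than a real obstacle — is the bookkeeping of the two kinds of negligible functions $\petiton$ and $\petitoe$: one must check that the $\petitoe$ corrections coming from $w_n\leq n^{-\beta+\petitoe}$ and from $\lambda$-type inputs (none appear here, so only the weight correction matters) combine additively with the $\petiton$ corrections from the deterministic summand and from the partial-sum estimate, without any $\gamma$-dependent blow-up that is not already absorbed; since $\gamma>1$ is a fixed constant, multiplying a $\petiton$ or $\petitoe$ by $\gamma$ leaves it $\petiton$ or $\petitoe$. There is no probabilistic difficulty: it is a one-line first-moment computation once the grafting probabilities are written down. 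I would present it in exactly that order — identify $\{\bb_k\rightarrow\bb_n\}=\{K_{k-1}=n\}$, bound its probability by $n^{-\beta+\petiton}$, bound the deterministic summand on the range, sum, and estimate the resulting power sum.
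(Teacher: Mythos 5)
Your argument is correct and is exactly the elementary first-moment computation the paper intends (it leaves the proof to the reader): since the weights are deterministic, $\Pp{\bb_k\rightarrow\bb_n}=w_n/W_{k-1}\leq n^{-\beta+\petiton}$ because $W_{k-1}\geq w_1>0$, and for $\xi\geq-1$ the power sum satisfies $\sum_{k=n+1}^{n^\gamma}k^{\xi+\petiton+\petitoe}\leq n^{\gamma(\xi+1)+\petiton+\petitoe}$ (e.g.\ writing $k^{\xi'}\leq (n^{\gamma})^{\xi'+1}k^{-1}$ so the possible logarithm is absorbed in $n^{\petiton}$), the fixed $\gamma$ only rescaling the negligible exponents. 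The one slip is your initial attempt to bound the summand uniformly by its value at $k=n^\gamma$, which fails when $\xi+\petiton+\petitoe<0$; the power-sum estimate you then switch to is the right route and makes that step unnecessary.
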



\begin{proof}[Proof of Proposition~\ref{gluing:recouvrements}]
	Let $s>0$ and $\epsilon>0$. We prove the proposition by induction on $i$. The first set of balls that we build is the following: for each block $\bb_n$, we cover the block with a ball of radius $n^{-\alpha+\epsilon}$, centred on the point $\brho_n$. We write:
	\[R_{n,1}^{s,\epsilon}=\{\Ball (\brho_n,n^{-\alpha+\epsilon})\}.\]
	According to \eqref{gluing:hauteur sous arbres}, there exists a random $N$ such that for all $n\geq N$, the set $R_{n,1}^{s,\epsilon}$ covers $\cT(\bb_n)$. The diameter of the ball of $R_{n,1}^{s,\epsilon}$ tend to $0$ as $n\rightarrow\infty$. Besides we have, 
	\[\Ec{\Vol_s(R_{n,1}^{s,\epsilon})}\leq (2n)^{-\alpha s+\epsilon s}=n^{f_1(s)+\petiton+\petitoe}.\] 
	The property is thus proved for $i=1$.
	
	Let $i\geq 1$ and $s<s_{i}$. Let us construct $\left(R_{n,i+1}^{s,\epsilon}\right)_{n\geq1}$, using the previous step $i$. We set $\gamma_{i+1}(s)>1$ a positive real number that we will choose later, and $\epsilon>0$. We define $R_{n,i+1}^{s,\epsilon}$ as follows: it is the union over all the blocks $\bb_k$ for $k<n^{\gamma_i(s)}$ that are grafted on the block $\bb_n$, of their covering $R_{k,i}^{s,\epsilon}$ of the preceding step, together with the union of a deterministic set of balls that we define hereafter. 
	
	We want to cover $\bb_n$ with balls of radius $n^{-\alpha\gamma_{i+1}(s)}$, which is equivalent to covering $\bB_n$ with balls of radius $\lambda_n^{-1} n^{-\alpha\gamma_{i+1}(s)}$. 
	Under Hypothesis~\ref{gluing:hypothese d}, for any $d\geq0$, using Lemma~\ref{gluing:majoration M_r} and Lemma~\ref{gluing:lem:random element of prb} in Appendix~\ref{gluing:subsec:decomposition in fragments}, we can a.s.\ find a random collection $(x_m)_{1\leq m\leq M_{r}(\bB_n)}$ of points of $\bB_n$ such that the balls centred on those points with radius $r:=\lambda_n^{-1} n^{-\alpha\gamma_{i+1}(s)}$ cover $\bB_n$, and such that $M_{r}(\bB_n)\leq N_{r/4}(\bB_n)$, where $N_r(\bB)$ is the minimal number of balls of radius $r$ needed to cover $\bB$.
	
	From the assumption on the sequence $(\lambda_n)$, we have $r\geq n^{-\alpha\gamma_{i+1}(s)+\alpha+\petiton}$. Since $N_r(\bB_n)$ is decreasing in $r$, using Hypothesis~\ref{gluing:hypothese d}\ref{gluing:nombre mininimal de boules} we get that \[\Ec{N_{r/4}(\bB_n)}\leq n^{-\alpha d+\alpha\gamma_{i+1}(s)d +\petiton}.\]
	In the end, \[R_{n,i+1}^{s,\epsilon}:=\left(\bigcup_{k\leq n^{\gamma_{i+1}(s)}:\bb_k\rightarrow \bb_n}R_{k,i}^{s,\epsilon}\right)\cup \enstq{\Ball \left(x_m,n^{-\alpha\gamma_{i+1}(s)+\epsilon}\right)}{1 \leq m \leq M_r(\bB_n)}.\]
	Now we compute the expectation of the $s$-volume of these sets of balls.
	\begin{align*}
	&\mathrel{} \Ec{\Vol_s(R_{n,i+1}^{s,\epsilon})}\\
	&=\Ec{\sum_{k=n+1}^{n^{\gamma_{i+1}(s)}} \ind{\bb_k\rightarrow \bb_n}\Vol_s(R_{k,i}^{s,\epsilon})}+\Ec{M_r(\bB_n)} (2n^{(-\alpha\gamma_{i+1}(s)+\epsilon)})^s\\
	&\leq\Ec{\sum_{k=n+1}^{n^{\gamma_{i+1}(s)}} \ind{\bb_k\rightarrow \bb_n}\Ec{\Vol_s(R_{k,i}^{s,\epsilon})}}+\Ec{N_{r/4}(\bB_n)} (2n^{(-\alpha\gamma_{i+1}(s)+\epsilon)})^s\\
	&\leq \Ec{\sum_{k=n+1}^{n^{\gamma_{i+1}(s)}} \ind{\bb_k\rightarrow \bb_n}k^{f_i(s)+\petiton+\petitoe}}+n^{-\alpha d +\alpha d \gamma_{i+1}(s)+(-\alpha\gamma_{i+1}(s)+\epsilon)s+\petiton+\petitoe}\\
	&\leq n^{-\beta+\gamma_{i+1}(s)(f_i(s)+1)+\petiton+\petitoe}+n^{-\alpha d + \alpha d \gamma_{i+1}(s)-\alpha\gamma_{i+1}(s) s+\petiton+\petitoe},
	\end{align*}
	where in the last line we used Lemma~\ref{gluing:minilemme} which applies because $s\leq s_i$, hence $f_i(s)\geq -1$. We then take $\gamma_{i+1}(s):=\frac{\beta-\alpha d}{f_i(s)+1-\alpha d +\alpha s}>1$, which yields:\[-\beta+\gamma_{i+1}(s)(f_i(s)+1)=-\alpha d+\alpha\gamma_{i+1}(s)d-\alpha\gamma_{i+1}(s) s=f_{i+1}(s).\]
	We then have,
	\[\Ec{\Vol_s(R_{n,i+1}^{s,\epsilon})}\leq n^{f_{i+1}(s)+\petiton+\petitoe}.\]
	We can check that $\max_{B\in R_{n,i+1}^{s,\epsilon}}\diam B\tend{n\rightarrow\infty}{}0$, and that almost surely, for $n$ large enough, the collections of balls $R_{n,i+1}^{s,\epsilon}$ are indeed coverings of $\cT(\bb_n)$ thanks again to \eqref{gluing:hauteur sous arbres}. This finishes the proof.
\end{proof}
We can now prove the main proposition of this section.
\begin{proof}[Proof of Proposition~\ref{gluing:dimension}]
	Let $i\geq1$. For $\epsilon>0$ small enough, we use Proposition~\ref{gluing:recouvrements} to get a set of balls $(R_{n,i+1}^{s_{i},\epsilon})_{n\geq 1}$, which satisfies:
	\[\Ec{\Vol_{s_{i}}(R_{n,i+1}^{s_i,\epsilon})}\leq n^{f_{i+1}(s_{i})+\petiton+\petitoe}.\]
	From Proposition~\ref{gluing:prop des fi}\ref{gluing:sis}, we have $f_{i+1}(s_{i})<-1$, so we can choose $\epsilon$ small enough such that the exponent is eventually smaller than $\frac{f_{i+1}(s_{i})-1}{2}<-1$ as $n\rightarrow\infty$. Then, for $N\geq 1$, we set $R_N=\bigcup_{n\geq N}R_{n,i+1}^{s_{i},\epsilon}$.
	According to Proposition~\ref{gluing:recouvrements}, the set of balls $R_{n,i+1}^{s_{i},\epsilon}$ is a covering of $\cT(\bb_n)$ for $n$ large enough and so $R_N$ is a covering of $\cL$, for all $N$.
	Since for any $\delta>0$, we may choose $N$ large enough so that $\max_{B\in R_N}\diam B <\delta$, we get:
	\[\cH_{s_{i}}(\cL)=\lim_{\delta\rightarrow 0}\cH_{s_{i}}^\delta(\cL)\leq \limsup_{N\rightarrow\infty}\Vol_{s_{i}}(R_N)\leq \Vol_{s_{i}}(R_1)<+\infty, \quad \text{a.s.}\]
	since \[\Ec{\Vol_{s_{i}}(R_1)}\leq \sum_{n=1}^\infty n^{\frac{f_{i+1}(s_{i})-1}{2}+\petiton}<+\infty.\]
	This shows that the Hausdorff dimension of $\cL$ satisfies $\mathrm{dim_H}(\cL)\leq s_{i}$, almost surely. In the end, since the sequence $(s_i)_{i\geq 1}$ tends to $s_\infty$, we conclude that almost surely, \[\mathrm{dim_H}(\cL)\leq s_\infty=\frac{2\beta-1-2\sqrt{(\beta-1)(\beta-\alpha d)}}{\alpha}.\qedhere\]
	
\end{proof}
\section{Lower-bounds for the $(\alpha,\beta)$-model}\label{gluing:sec: lower bounds}
In this section we compute lower-bounds on the Hausdorff dimension of the set $\cL$. We do that by constructing Borel measures on $\cL$ that satisfy the assumptions of Frostman's lemma (Lemma~\ref{gluing:frostman lemma} in Appendix~\ref{gluing:subsec:hausdorff dimension}). 
In the case where $\beta\leq1$ we use the natural measure $\bar{\mu}$ on $\cT$ which arises as the limit of the normalised weight measures on $\cT_n$ (see Proposition~\ref{gluing:convergence of the measure mu}). The case $\beta>1$ is a bit more technical because the natural measure $\bar{\mu}$ is not concentrated on $\cL$, so we have to construct another measure $\pi$, that we define as the subsequential limit of some well-chosen sequence of probability measures on $\cT$. 
\subsection{Case $\beta\leq1$ and use of the measure $\bar{\mu}$}\label{gluing:subsec:lower bound measure mu}
In this subsection, we suppose that $\beta\leq 1$. Under the assumptions of Proposition~\ref{gluing:convergence of the measure mu}, the sequences of measures $\bar{\mu}_n$ almost surely converges weakly to a measure $\bar{\mu}$, which is concentrated on the set of leaves $\cL$. The existence of $\bar{\mu}$ will be useful for the proof of the next proposition. Recall from \eqref{gluing:hauteur point unif} the definition of the random variable $\cH$ and the fact that the assumptions on $\cH$ in the proposition are satisfied under Hypothesis~\ref{gluing:hypothese d} for any $d\geq0$.
\begin{proposition}\label{gluing:prop:minoration beta petit}
	Suppose that Hypothesis~\ref{gluing:condition beta petit} or Hypothesis~\ref{gluing:condition beta 1} is satisfied. Suppose also that $\Ec{\cH^2}<\infty$ and that $\Pp{\cH>0}>0$. Then the Hausdorff dimension of $\cL$ almost surely satisfies:
	\[\mathrm{dim_H}(\cL)\geq \frac{1}{\alpha}.\]
\end{proposition}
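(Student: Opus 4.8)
The plan is to apply Frostman's lemma (Lemma~\ref{gluing:frostman lemma}) to the natural measure $\bar{\mu}$ on $\cT$, which under our hypotheses is concentrated on $\cL$ by Proposition~\ref{gluing:convergence of the measure mu} (note that $\beta\leq 1$ forces $W_\infty=\infty$, either directly when $\beta<1$ via $W_n=n^{1-\beta+\petito{1}}$, or by the remark following Hyp.~\ref{gluing:condition beta 1}). It suffices to show that for every $s<\frac{1}{\alpha}$, almost surely $\int\int \bar{\mu}(\mathrm{d}x)\bar{\mu}(\mathrm{d}y)\,\dist(x,y)^{-s}<\infty$, or equivalently (and more conveniently) that $\bar{\mu}(\Ball(x,r))\leq r^{s}$ for $\bar{\mu}$-almost every $x$ once $r$ is small, with a uniform-in-$x$ bound after removing a null set. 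Since $\bar{\mu}$ is the law of the random leaf $Y=\lim Y_n$ conditionally on $(\cT,\bar\mu)$, the cleanest route is: first estimate $\Ec{\bar{\mu}(\Ball(Y,r))}$, then upgrade to an almost sure statement by a Borel--Cantelli argument along a geometric sequence of radii.

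The key computation is to bound, for a fixed small $r>0$, the probability that an independent uniform leaf $Y'$ (distributed as $\bar\mu$ given $(\cT,\bar\mu)$, independent of $Y$) lands within distance $r$ of $Y$. First I would choose the threshold index $n=n(r)$ so that $n^{-\alpha+\petito{1}}\approx r$, i.e.\ $n\approx r^{-1/\alpha}$. Using the construction of the marked point from Lemma~\ref{gluing:lem:properties of the marked point} and the height growth formula \eqref{gluing:eq:croissance du point uniforme}, the distance $\dist(Y,Y_n)$ is a sum $\sum_{k>n}\lambda_k\cH_k\ind{U_k\leq w_k/W_k}$, which has mean $\sum_{k>n}\lambda_k\frac{w_k}{W_k}\Ec{\cH}$; one checks this tail is $o(r)$ under the hypotheses (here $\beta\leq 1$ and $\lambda_k\leq k^{-\alpha+\petito{1}}$ make the series summable with a tail of order $n^{-\alpha+\petito{1}}$). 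Thus with high probability $Y$ and $Y'$ at distance $<r$ forces their projections onto $\cT_n$ to be close, and in particular (up to lower-order corrections) forces $J_n=J'_n$ — the two marked points must sit in the \emph{same} block $\bb_{J_n}$. Now $\Ppsq{J'_n=j}{J_n=j}=w_j/W_n$, so summing,
\[
\Pp{\dist(Y,Y')< r}\lesssim \Ec{\sum_{j=1}^{n}\left(\frac{w_j}{W_n}\right)^2}+(\text{error})=\frac{\sum_{j=1}^n w_j^2}{W_n^2}+(\text{error}).
\]
Under Hyp.~\ref{gluing:condition beta petit}, $w_j\leq j^{-\beta+\petito{1}}$ and $W_n=n^{1-\beta+\petito{1}}$, so $\sum_{j\leq n}w_j^2\leq n^{1-2\beta+\petito{1}}$ (when $\beta<1/2$; when $\beta\in[1/2,1)$ the sum is $n^{\petito{1}}$), giving in all cases $\Pp{\dist(Y,Y')<r}\leq n^{-1+\petito{1}}=r^{1/\alpha+\petito{1}}$; a similar bound holds in the $\beta=1$ case using $W_n\to\infty$ and $w_j/W_j\leq j^{-1+\petito{1}}$. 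Hence $\Ec{\bar{\mu}(\Ball(Y,r))}=\Pp{\dist(Y,Y')<r\mid (\cT,\bar\mu)}$ in expectation is at most $r^{1/\alpha-\petitoe}$ for $r$ small.

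To conclude, fix $s<\frac1\alpha$ and $\epsilon>0$ with $s<\frac1\alpha-\epsilon$. Apply the expectation bound along $r=2^{-m}$: by Markov, $\Pp{\bar{\mu}(\Ball(Y,2^{-m}))\geq 2^{-ms}}\leq 2^{m(s-1/\alpha+\petitoe)}$, which is summable in $m$ once $\epsilon$ is small; Borel--Cantelli then gives that almost surely $\bar\mu(\Ball(Y,2^{-m}))\leq 2^{-ms}$ for all large $m$. Integrating against $\bar\mu$ and using Fubini, $\bar\mu\otimes\bar\mu$-almost every pair $(x,y)$ and hence $\bar\mu$-almost every $x$ satisfies $\bar\mu(\Ball(x,r))\leq r^{s-\petitoe}$ for small $r$; removing the exceptional null set and using monotonicity in $r$ between dyadic scales, Frostman's lemma yields $\dim_H(\cL)\geq \dim_H(\supp\bar\mu)\geq s$. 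Letting $s\uparrow\frac1\alpha$ finishes the proof. The main obstacle is the second paragraph: making rigorous the claim that $\dist(Y,Y')<r$ \emph{forces} $J_n=J'_n$ with a controllable error term — one has to handle the event that the substructures $\cT(\bb_{J_n})$ and $\cT(\bb_{J'_n})$ hang off nearby but distinct points of $\cT_n$, and show using the height bound \eqref{gluing:hauteur sous arbres} together with the choice $n\approx r^{-1/\alpha}$ that this contributes only an $r^{1/\alpha+\petito{1}}$ error, not spoiling the estimate.
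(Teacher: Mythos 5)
Your overall strategy (Frostman's lemma applied to $\bar{\mu}$, which is concentrated on $\cL$ since $W_\infty=\infty$) is the same as the paper's, but the core estimate is different, and it is exactly there that the proposal has a genuine gap. Your second paragraph rests on the claim that $\dist(Y,Y')<r$ forces $J_n=J'_n$ for $n\approx r^{-1/\alpha}$, up to an error of order $r^{1/\alpha+\petito{1}}$, and you yourself flag this as the "main obstacle" without resolving it. The difficulty is not cosmetic: two points lying in distinct blocks $\bb_{J_n}$, $\bb_{J'_n}$ can be within distance $r$ whenever both are within distance $\approx r$ of the attachment points of their blocks, and the probability of that event is itself a two-point quantity of the same nature as the one you are trying to bound, so nothing in the sketch breaks the circularity. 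Moreover, your supporting estimate is off: $\Ec{\dist(Y,Y_n)}=\Ec{\cH}\sum_{k>n}\lambda_k w_k/W_k$ is of order $n^{-\alpha+\petito{1}}=r^{1+\petito{1}}$, i.e.\ \emph{comparable} to $r$, not $\petito{r}$; and in any case an upper bound on $\dist(Y,Y_n)$ is not what is needed — to prevent close pairs in distinct blocks one needs a \emph{lower} bound on how far a typical leaf sits above the roots of its ancestral blocks. That is precisely where the hypotheses must enter: the lower bounds $\lambda_k\geq k^{-\alpha-\epsilon}$, $w_k\geq k^{-\beta-\epsilon}$ on the set $G^\epsilon$ and the divergence conditions in Hyp.~\ref{gluing:condition beta petit}/\ref{gluing:condition beta 1}, together with $\Pp{\cH>0}>0$, are never used in your argument, yet without them the error term cannot be controlled (if most blocks had negligible height, points from many distinct blocks would cluster at scale $r$ and the two-point function would exceed $n^{-1}$).

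For comparison, the paper's proof supplies exactly this missing ingredient. Lemma~\ref{gluing:suite vers feuille 2} shows (via the marked point $Y$, the growth formula \eqref{gluing:eq:croissance du point uniforme}, $\Pp{\cH>c}>0$, and the $G^\epsilon$-divergence from Lemma~\ref{gluing:divergence en log} or Hyp.~\ref{gluing:condition beta 1}, plus Borel--Cantelli) that for $\bar{\mu}$-a.e.\ leaf $x$ and every scale window $\intervalleentier{n_k}{n_{k+1}}$ there is an ancestral block $\bb_n$ with $n\in G^\epsilon$ and $\dist(x,\brho_n)\geq n^{-\alpha-2\epsilon}$; hence for $r$ below that threshold the whole ball $\Ball(x,r)$ lies in $\cT(\bb_n)$, and $\bar{\mu}(\Ball(x,r))\leq\bar{\mu}(\cT(\bb_n))\leq n^{-1+\epsilon}$ by the Pólya-urn moment estimate of Lemma~\ref{gluing:urne de polya}. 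Your first-moment/two-point route could perhaps be completed, but only by proving a statement of the same strength as Lemma~\ref{gluing:suite vers feuille 2} to control the "different blocks" error term, so as written the proposal is missing the key idea rather than offering a genuinely alternative proof.
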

As we said earlier, the idea is to prove this lower bound on the dimension using Frostman's lemma: we will thus prove that almost surely, for $\bar{\mu}$-almost all leaves $x\in\cL$, we have an upper bound of the type \[\bar{\mu}(\Ball (x,r))\leq r^{1/\alpha-\epsilon},\] for $r$ sufficiently small, and for all $\epsilon$. An application of Lemma~\ref{gluing:frostman lemma} will then finish our proof. 

In order to prove this control on the masses of the balls, we will use two lemmas. The first one allows us to compare $\bar{\mu}(\Ball (x,r))$ with a quantity of the form $\bar{\mu}(\cT(\bb_n))$ for an appropriate $n$. The second one, Lemma~\ref{gluing:urne de polya}, provides a good control of the quantities $\bar{\mu}(\cT(\bb_n))$ for large $n$, such that the combination of the two will provide the upper bound that we want.
Let $\epsilon>0$. Recall from \eqref{gluing:def G epsilon} the definition of $G^\epsilon$.
\begin{lemma}\label{gluing:suite vers feuille 2}
	Set $n_0=2$ and $n_{k+1}=\lceil n_k^{1+\epsilon}\rceil $. Under the hypotheses of Proposition~\ref{gluing:prop:minoration beta petit}, almost surely for $\bar{\mu}$-almost every $x\in\cL$, for all $k$ large enough\footnote{The threshold depends on the realisation and on $x$.}, there exists $n \in \intervalleentier{n_k}{n_{k+1}}\cap G^\epsilon$ such that
	\[x\in\cT(\bb_{n}) \quad \text{and} \quad \dist(x,\brho_{n})\geq n^{-\alpha - 2\epsilon}.\]
\end{lemma}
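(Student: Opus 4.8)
The plan is to track, for a $\bar\mu$-typical leaf $x$, the sequence of blocks it "passes through" and to show that infinitely often one of these blocks falls in $G^\epsilon$ at a scale comparable to its index, and moreover that $x$ sits at height at least $n^{-\alpha-2\epsilon}$ inside it. Recall from Proposition~\ref{gluing:convergence of the measure mu} that under the hypotheses at hand $\bar\mu$ is concentrated on $\cL$ and that, conditionally on $(\cT,\bar\mu)$, the marked point $Y$ of Lemma~\ref{gluing:lem:properties of the marked point} has law $\bar\mu$; so it suffices to prove the statement for $x=Y$, that is, to work with the sequence $(J_n,Y_n)_{n\ge1}$. For each $n$, $Y\in\cT(\bb_{J_n})$ and $[Y]_n=Y_n\in\bb_{J_n}$, so the relevant block indices along the ancestral line of $Y$ are exactly the values $J_{n_k},J_{n_k+1},\dots$; what we must show is that for each $k$ large, \emph{some} index $J_m$ with $m\in\intervalleentier{n_k}{n_{k+1}}$ both lies in $G^\epsilon$ and satisfies $J_m\in\intervalleentier{n_k}{n_{k+1}}$ (so that $n:=J_m$ has the right polynomial scale), together with the height lower bound $\dist(Y,\brho_n)\ge n^{-\alpha-2\epsilon}$.

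First I would handle the index/scale part. By \eqref{gluing:eq:croissance du point uniforme} (and its discrete counterpart \eqref{gluing:eq:croissance hauteur point marque}), between times $n_k$ and $n_{k+1}$ a fresh block is grafted onto the ancestral line of $Y$ precisely at those $m$ with $U_m\le w_m/W_m$, and in that case $J_m=m$. So the event "there exists $m\in\intervalleentier{n_k}{n_{k+1}}$ with $U_m\le w_m/W_m$ and $m\in G^\epsilon$" is exactly what we need for the scale: it produces $n=m=J_m\in\intervalleentier{n_k}{n_{k+1}}\cap G^\epsilon$ with $Y\in\cT(\bb_n)$. By independence of the $U_m$, the probability that no such $m$ occurs in $\intervalleentier{n_k}{n_{k+1}}$ is $\prod_{m\in\intervalleentier{n_k}{n_{k+1}}\cap G^\epsilon}(1-w_m/W_m)$; using $1-x\le e^{-x}$ this is at most $\exp(-\sum_{m\in\intervalleentier{n_k}{n_{k+1}}\cap G^\epsilon} w_m/W_m)$. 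Here the divergence hypotheses enter: under Hyp.~\ref{gluing:condition beta petit}, $W_n=n^{1-\beta+\petito{1}}$ together with the $\liminf$ condition on the $G^\epsilon$-weight forces $\sum_{m\in\intervalleentier{n_k}{n_{k+1}}\cap G^\epsilon} w_m/W_m$ to be bounded below by a positive constant times $\log(n_{k+1}/n_k)\asymp\epsilon\log n_k$, so these probabilities are summable in $k$; under Hyp.~\ref{gluing:condition beta 1}, the displayed condition with $\log\log\log N$ is tailored to make $\sum_k\exp(-\sum_{m\in\intervalleentier{n_k}{n_{k+1}}\cap G^\epsilon}w_m/W_m)<\infty$ as well. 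A Borel--Cantelli argument then gives that a.s.\ for all $k$ large enough such an $m$ exists; since this is an almost sure statement about the coupled construction, it holds for $Y$ and hence (conditionally) for $\bar\mu$-a.e.\ $x$.

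It remains to secure the height bound $\dist(Y,\brho_n)\ge n^{-\alpha-2\epsilon}$ at the chosen block $n=m$. When $Y$ is grafted at step $m$ on a fresh block, by \eqref{gluing:eq:croissance du point uniforme} the increment $\dist(Y_m,Y_{m-1})=\dist(Z_m,\rho_m)$ is distributed as $\lambda_m\cH_m$ with $\cH_m$ an independent copy of $\cH$, and $\dist(Y,\brho_n)=\dist(Y,\brho_m)\ge\dist(Y_m,Y_{m-1})$ only after one checks that $Y$ does not later "leave" $\bb_m$ toward its root — but in fact $\brho_m=Y_{m-1}$ and $\dist(Y,\brho_m)\ge\dist(Y_m,\brho_m)=\lambda_m\cH_m$ since all further blocks are grafted beyond $Y_m$ on $\bb_m$ or its descendants, so heights only increase. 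Using $\lambda_m\ge m^{-\alpha-\epsilon}$ for $m\in G^\epsilon$, it is enough that $\cH_m\ge m^{-\epsilon}$, i.e.\ $\cH_m\ge n^{-\epsilon}$. Since $\Pp{\cH>0}>0$, the probability that $\cH_m<m^{-\epsilon}$ tends to $\Pp{\cH=0}<1$, and one can arrange the Borel--Cantelli bookkeeping so that among the (many) candidate indices $m\in\intervalleentier{n_k}{n_{k+1}}\cap G^\epsilon$ with $U_m\le w_m/W_m$, at least one also has $\cH_m\ge m^{-\epsilon}$ for all large $k$: the expected number of such "good" indices diverges (it is a constant fraction of $\sum w_m/W_m\to\infty$ over the window) and they are independent, so the probability of none is again summable. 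Combining, a.s.\ for all $k$ large there is $n=m\in\intervalleentier{n_k}{n_{k+1}}\cap G^\epsilon$ with $Y\in\cT(\bb_n)$ and $\dist(Y,\brho_n)\ge\lambda_n\cH_n\ge n^{-\alpha-\epsilon}\cdot n^{-\epsilon}=n^{-\alpha-2\epsilon}$, which is the claim.

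\medskip

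The main obstacle I expect is the second moment / Borel--Cantelli step for the combined event: one needs the windows $\intervalleentier{n_k}{n_{k+1}}$ to contain enough indices that are \emph{simultaneously} in $G^\epsilon$, carry a fresh graft of the marked point, and have $\cH_m$ not too small, and to get summability of the "failure" probabilities over $k$. This is exactly where the three different lower-bound hypotheses (the $\liminf$ for $\beta<1$, the $\log\log\log$ for $\beta=1$) do their work, and some care is needed because the contribution of $G^\epsilon$ to the weight is only guaranteed in an asymptotic-density sense, so the sum $\sum_{m\in\intervalleentier{n_k}{n_{k+1}}\cap G^\epsilon}w_m/W_m$ must be lower-bounded by telescoping $\sum_{m\le N}w_m/W_m$-type estimates (as in Lemma~\ref{gluing:lem:croissance logarithmique}) combined with the density hypothesis, rather than term by term.
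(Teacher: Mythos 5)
Your proposal is correct and follows essentially the same route as the paper: reduce to the marked point $Y$ via the coupling of Lemma~\ref{gluing:lem:properties of the marked point}, bound the probability that no index $m\in\intervalleentier{n_k}{n_{k+1}}\cap G^\epsilon$ simultaneously carries a fresh graft of the marked chain ($U_m\leq w_m/W_m$) and a not-too-small height $\cH_m$, using independence, the product bound $\prod(1-q\,w_m/W_m)\leq\exp(-q\sum w_m/W_m)$ and the divergence estimates (Lemma~\ref{gluing:divergence en log} for $\beta<1$, the $\log\log\log$ condition for $\beta=1$), then Borel--Cantelli and $\lambda_n\geq n^{-\alpha-\epsilon}$. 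The only cosmetic difference is that you take the height threshold $m^{-\epsilon}$ whereas the paper fixes a constant $c$ with $\Pp{\cH>c}>0$, which slightly streamlines the single combined Bernoulli event of probability $p\,w_m/W_m$; both yield the same bound $n^{-\alpha-2\epsilon}$.
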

\begin{proof}
	Note that in our setting, the hypothesis of Proposition~\ref{gluing:convergence of the measure mu} holds and so the random leaf $Y$ constructed in Section~\ref{gluing:height of a uniform point} is defined a.s. Also, according to Proposition~\ref{gluing:convergence of the measure mu}, conditionally on $(\cT,\bar{\mu})$, the point $Y$ has distribution $\bar{\mu}$. So it suffices to prove that the lemma holds for the the random leaf $Y$. We recall
	\[\dist(Y_n,Y_m)\overset{\mathrm{law}}{=}\sum_{k=m+1}^n \lambda_k \cH_k \ind{U_{k}\leq \frac{w_{k}}{W_{k}}} \quad \text{and} \quad Y=\lim_{n\rightarrow \infty} Y_n.\]
	Let us introduce a constant $c>0$ and set
	\[p:=\Pp{\cH>c}.\] 
	
	For $\beta<1$, thanks to our assumptions, we can fix $c$ such that $p$ is non-zero. We then have:
	\begin{align*}
	\Pp{\forall i \in \intervalleentier{n_k}{n_{k+1}}\cap G^\epsilon, \ U_i> \frac{w_i}{W_i} \ \text{or} \ \cH_i<c}
	&=\underset{i\in G^\epsilon}{\prod_{i={n}_k}^{n_k^{1+\epsilon}}}\left(1-p\frac{w_i}{W_i}\right)\\
	&= \exp\left({\underset{i\in G^\epsilon}{\sum_{i= n_k}^{n_k^{1+\epsilon}}}\log\left(1-p\frac{w_i}{W_i}\right)}\right)\\
	&\leq \exp\left({-p\underset{i\in G^\epsilon}{\sum_{i={n}_k}^{{n}_k^{1+\epsilon}}}\frac{w_i}{W_i}}\right)\\
	&\underset{\text{Lem.} \ref{gluing:divergence en log}}{\leq} \exp\left(-pC_\epsilon \log({n}_k)\right).
	\end{align*}
	To write the last line we use Lemma~\ref{gluing:divergence en log} in the Appendix and we can see that the last display is summable over $k$. 
	
	For the case $\beta=1$, Hypothesis~\ref{gluing:condition beta 1} allows us to write 
	\[\Pp{\forall i \in \intervalleentier{n_k}{n_{k+1}}\cap G^\epsilon, \ U_i> \frac{w_i}{W_i} \ \text{or} \ \cH_i<c}\leq \exp\left(-p f(k)\log\log\log({n}_k)\right),\]
	with a function $f(k)$ tending to infinity. Since $n_k\geq 2^{(1+\epsilon)^k}$, then $\log\log\log({n}_k)\geq (1+\petito{1}) \log k$ and the last display is also summable in $k$.
	In both cases, an application of the Borel-Cantelli lemma shows that we have almost surely, for $k$ large enough, 
	\[\exists n \in \intervalleentier{n_k}{n_{k+1}}\cap G^\epsilon, \qquad U_n\leq\frac{w_n}{W_n} \quad \text{and} \quad \cH_n\geq c.\]
	Since $n\in G^\epsilon$, we have $\lambda_n\geq n^{-\alpha-\epsilon}$. Combined with the fact that $\cH_n\geq c$ we get \[\dist(\brho_n,Y)\geq \lambda_n \cH_n \geq c n^{-\alpha-\epsilon}\geq n^{-\alpha-2\epsilon}, \text{ for $n$ (or equivalenly $k$) large enough}.\qedhere\]
\end{proof}
\begin{proof}[Proof of Proposition~\ref{gluing:prop:minoration beta petit}]
	Let $\epsilon>0$. Let us fix a realisation of $\cT$ and a leaf $x\in\cL$ such that the conclusions of Lemma~\ref{gluing:suite vers feuille 2} and Lemma~\ref{gluing:urne de polya} hold. Note that thanks to Hypothesis~\ref{gluing:condition beta petit} or Hypothesis~\ref{gluing:condition beta 1}, the condition of application of Lemma~\ref{gluing:urne de polya} are fulfilled. From the definition of $n_{k+1}$ we have $n_k^{1+\epsilon}<n_{k+1}\leq n_k^{1+\epsilon}+1$ and so $n_{k+1}\underset{k\rightarrow\infty}{=}n_k^{1+\epsilon+\petito{1}}$.
	We know from Lemma~\ref{gluing:suite vers feuille 2} that for all $k$ large enough, there exists $n\in\intervalleentier{n_k}{n_{k+1}}$ such that $x\in\cT(\bb_n)$ and 
	\[\dist(\brho_{n},x)\geq n^{-\alpha-2\epsilon}\geq n_{k+1}^{-\alpha-2\epsilon}\geq n_k^{(1+\epsilon+\petito{1})(-\alpha-2\epsilon)}.\] 
	So if we take $k$ large enough and $r\in\intervallefo{n_{k+2}^{-\alpha-2\epsilon}}{ n_{k+1}^{-\alpha-2\epsilon}}$, then
	\begin{align*}
	\bar{\mu}(\Ball(x,r))\leq \bar{\mu}(\cT(\bb_{n})) \underset{\text{Lem. } \ref{gluing:urne de polya}}{\leq} n^{-1+\epsilon} \leq n_k^{-1+\epsilon}&= n_{k+2}^{\frac{-1+\epsilon}{(1+\epsilon)^2}+\petito{1}}\\
	&\leq \left(r^{\frac{-1}{\alpha+2\epsilon}}\right)^{\frac{-1+\epsilon}{1+\epsilon}+\petito{1}}\\
	&\leq r^{\frac{1}{\alpha}+g(\epsilon)+\petito{1}},
	\end{align*}
	with a function $g$ tending to $0$ as $\epsilon\rightarrow 0$.
	
	Since the last display is true almost surely for all $r$ sufficiently small, we use Lemma~\ref{gluing:frostman lemma} (Frostman's lemma) to deduce that the Hausdorff dimension of $\cL$ is a.s.\ larger than $\frac{1}{\alpha}+g(\epsilon)$. Taking $\epsilon\rightarrow 0$ we get that almost surely, \[\mathrm{dim_H}(\cL)\geq \frac{1}{\alpha}.\qedhere\]
\end{proof}
\subsection{Case $\beta>1$ and construction of measures on the leaves}\label{gluing:subsec:construction of measures on the leaves}
The following section is devoted to prove the following proposition.
\begin{proposition}\label{gluing:prop:minoration dimension hausdorff}
	Suppose that Hypothesis~\ref{gluing:condition beta grand} is satisfied and that the block $\bB$ satisfies Hypothesis~\ref{gluing:hypothese d} for some $d\geq 0$. Then the Hausdorff dimension of $\cL$ almost surely satisfies:
	\[ \begin{aligned}
	\mathrm{dim_H}(\cL)&\geq \frac{2\beta-1-2\sqrt{(\beta-1)(\beta-\alpha d)}}{\alpha}, \quad \text{if } \alpha<\frac{1}{d},\\
	&\geq \frac{1}{\alpha} \quad\text{otherwise.}\end{aligned}\]
\end{proposition}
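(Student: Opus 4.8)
The plan is to apply Frostman's lemma (Lemma~\ref{gluing:frostman lemma}): I will construct a random Borel probability measure $\pi$ carried by $\cL$ and show that, almost surely, for $\pi$-almost every $x$ and every $\epsilon>0$ one has $\pi(\Ball(x,r))\leq r^{s-\epsilon}$ for all $r$ small enough, where $s=s_\infty=\frac{2\beta-1-2\sqrt{(\beta-1)(\beta-\alpha d)}}{\alpha}$ when $\alpha<\frac1d$ and $s=\frac1\alpha$ when $\alpha\geq\frac1d$. Since the natural measure $\bar\mu$ of Proposition~\ref{gluing:convergence of the measure mu} charges $\cT^*$ and not $\cL$ when $\beta>1$ (because then $W_\infty<\infty$), a new measure has to be built by hand, and I would obtain it as a subsequential weak limit of probability measures $(\pi_k)_{k\geq1}$, where $\pi_k$ is concentrated on $\cT_{2n_k}\setminus\cT_{n_k}$ for a sequence $(n_k)$ growing like $n_{k+1}\approx n_k^{\gamma}$, the exponent $\gamma>1$ being a free parameter optimised only at the very end.

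The measures $\pi_k$ are defined recursively, as a ``dual'' of the covering algorithm of Section~\ref{gluing:subsec:upper bound difficile} and in the spirit of the fragmentation decomposition of Appendix~\ref{gluing:subsec:decomposition in fragments}. One starts with a finite collection of blocks with indices in a band around $n_0$, each carrying an initial mass. Given the mass currently on a block $\bb_n$ at generation $j$, one redistributes it at generation $j+1$ by cutting $\bb_n$ into fragments of size $\approx n^{-\alpha\gamma}$, assigning to each the $\bNu_n$-mass of the corresponding region (this is legitimate thanks to the two-sided control \eqref{gluing:controle boules} of the $\bNu$-mass of balls in Hypothesis~\ref{gluing:hypothese d}), and then transferring the mass of each fragment onto the substructures $\cT(\bb_k)$ of the children (or descendants) $\bb_k$ grafted into that fragment whose index lies in the next window (roughly $(n_{k_{j+1}},2n_{k_{j+1}}]$), proportionally to their weights. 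Iterating $k$ times, the total mass sits on blocks with index in a band around $n_k$, so $\pi_k$ is a probability measure on $\cT_{2n_k}\setminus\cT_{n_k}$, and since the blocks selected at a generation are descendants of those selected at the previous one, the ``cylinder'' structure is nested.

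Two things then remain. First, that $(\pi_k)$ has a subsequential weak limit $\pi$ — immediate, since $\cT$ is a.s.\ compact, hence the family is tight — and that $\pi$ is carried by $\cL$: this holds because for every fixed $m$ the support of $\pi_k$ is disjoint from $\cT_m$ once $k$ is large, so $\pi(\cT_m)=0$ and $\pi$ charges no block. Second — and this is the crux — one must prove the Frostman estimate $\pi(\Ball(x,r))\leq r^{s-\epsilon}$. For this I would fix the generation $j$ at which $r$ is of the order of the diameter of a generation-$j$ cylinder descending from some $\bb_n$ (which is $\approx n^{-\alpha+\petitoom{1}}$ by \eqref{gluing:hauteur sous arbres}), bound the $\pi$-mass of each such cylinder, and bound the number of cylinders a ball of radius $r$ can meet. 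The mass control comes from the same Pólya-urn/martingale moment computation as in Lemma~\ref{gluing:urne de polya}; the bound on the number of cylinders met uses crucially the lower bound $\bNu(\Ball(x,r))\geq r^{d+\varphi(r)}$ of Hypothesis~\ref{gluing:hypothese d}, which forbids too many children (hence cylinders) from accumulating near $x$ inside one block, together with the fact (from $\beta>1$) that the first child of $\bb_n$ has index roughly $n^\beta\gg n$. Combining these with $\lambda_n\leq n^{-\alpha+\petito{1}}$ yields $\pi(\Ball(x,r))\leq r^{s(\gamma)-\epsilon}$ with an exponent $s(\gamma)$ that is precisely the fixed-point counterpart of the exponents $f_i$ from Definition-Proposition~\ref{gluing:def:fonctions fi}.

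Finally I would optimise $s(\gamma)$ over the admissible range of $\gamma$ (bounded below by $\beta$, so that children in the right window exist, and bounded above by the requirement that a radius-$r$ ball meets only boundedly many generation-$j$ cylinders). When $\alpha<\frac1d$ this optimum equals $s_\infty$, matching the upper bound of Proposition~\ref{gluing:dimension}; when $\alpha\geq\frac1d$ the admissibility constraints only let one reach the exponent $\frac1\alpha$, which is the value claimed and is consistent with Proposition~\ref{gluing:prop:compacité+majoration dimension}. Lemma~\ref{gluing:frostman lemma} then gives the stated lower bound on $\mathrm{dim_H}(\cL)$. I expect the genuinely hard points to be checking that the recursive construction is well-posed and consistent (so that $\pi$ exists as a probability measure genuinely supported on $\cL$) and, above all, the combinatorial estimate on the number of small cylinders a small ball can intersect: this is where the geometry of the block and the value of $d$ enter, and where the non-trivial exponent $s_\infty$ is actually produced.
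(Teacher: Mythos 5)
Your outline follows the paper's strategy quite closely: a subsequential limit $\pi$ of measures $\pi_k$ carried by $\cT_{2n_k}\setminus\cT_{n_k}$ with $n_{k+1}\approx n_k^{\parametre}$, a decomposition of the generation-$k$ blocks into fragments at the scale of generation $k+1$, a count of the fragments met by a small ball via the two-sided control \eqref{gluing:controle boules}, and Frostman plus an optimisation over $\parametre$. The genuine gap is in the mass control. In your cascade, the $\pi$-mass of a generation-$k$ cylinder is a product of ratios whose denominators are the total weights of the next-window blocks grafted into each fragment; the Frostman estimate therefore hinges on a \emph{lower-tail} control of these denominators, uniform over the (polynomially many) fragments of every generation and conditional on the past — and on every fragment actually receiving children at all. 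This is not delivered by ``the same P\'olya-urn/martingale moment computation as in Lemma~\ref{gluing:urne de polya}'': that lemma gives an upper bound on $\bar\mu(\cT(\bb_n))$ and its hypotheses ($\bar\mu$ concentrated on $\cL$) fail precisely when $\beta>1$. What is needed is the conditional Chernoff-type estimate of Lemma~\ref{gluing:mass estimations}: the weight $\chi(f)$ grafted into a fragment $f$ during the next window concentrates around $a_{k+1}\abs{f}$ with failure probability $\exp(-n_{k+1}^{c})$, which beats the union bound over fragments; for it to apply, fragments must carry mass at least $n_{k+1}^{-1+\epsilon}$, which forces $\eta<1/d$ and $\parametre>\frac{\beta-\alpha d}{1-\eta d-\epsilon}$ (strictly stronger than your ``$\parametre>\beta$''), and it is this cap $\eta<1/d$ — not an upper bound on $\parametre$, which is sent to $\infty$ — that saturates the exponent at $1/\alpha$ when $\alpha\geq 1/d$. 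One also needs the product $\prod a_j$ estimate (Lemma~\ref{gluing:lemme des ai}) to convert ``fair share'' into the exponent $n_k^{\parametre(1-\beta)/(\parametre-1)}$ for $\abs{\cB_k}$.

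Two further points that this concentration step is also what settles, and which your sketch leaves open. First, well-posedness: a fragment with no child in the next window strands its mass; the paper avoids this by keeping only blocks satisfying the regularity property \eqref{gluing:property P}, grafting on the upper halves $\overline{\bb}_n$ (so that consecutive generations are separated by $\Delta_k>0$, which is also what reduces your ball–fragment count to fragments of a \emph{single} block, a step your counting argument needs), and working on the event $\cE$ of probability close to $1$ on which \eqref{gluing:eq:bounds mass grafted on fragments} holds for every fragment. Second, since all of this only holds on an event of positive probability (for $n_0$ large), you must upgrade to an almost-sure statement via the zero-one law / monotonicity arguments of Sections~\ref{gluing:properties of interest} and \ref{gluing:monotonicity of Hausdorff dimension}; the boundary case $\alpha=1/d$ is likewise obtained by the monotonicity trick rather than by the optimisation. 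These are all fixable along the paper's lines, but without the uniform conditional concentration estimate your construction does not yet produce the Frostman exponent you claim.
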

In the case $\beta>1$, we cannot use the natural measure $\bar{\mu}$ to get a good lower bound on the Hausdorff dimension of $\cL$ since, as stated in Proposition~\ref{gluing:convergence of the measure mu}, the measure $\bar{\mu}$ does not charge the leaves. So the goal of this subsection is  to artificially construct a probability measure concentrated on the leaves that will give us, using Frostman's lemma, the appropriate lower bound on the Hausdorff dimension, that is, the one matching with the upper bound derived in Section~\ref{gluing:sec:upper bounds}. The measure will be obtained as a sub-sequential limit of a sequence of measures concentrated on the blocks, and will only charge a strict subset of $\cL$. 

First, let us fix some notation. Recall the definition of $G_n^\epsilon$ in \eqref{gluing:def G epsilon n}.It follows from \ref{gluing:condition beta grand} that there exists a function $h(n)$ tending to $0$ such that $\#G_n^{h(n)}=n^{1+\petito{1}}$. We choose such a function $h$ and let 
\begin{equation}\label{gluing:def Gn hn}G_n:=G_n^{h(n)}.
\end{equation}
We will also use an increasing sequence of positive integers $(n_k)_{k\geq 0}$, such that for all $k\geq 0$, we have $n_{k+1}=\lceil n_k^{\parametre} \rceil $, with a fixed $\parametre>1$, that we will optimise later. Also we suppose $n_0$ to be very large, with conditions that we will make explicit in what follows. For all $n\geq 1$, we set \[\overline{\bb}_n:=\enstq{x\in \bb_n}{\bd_n(\brho_n,x)>\frac{\haut(\bb_n)}{2}},\]
the "upper-half" of block $\bb_n$. For technical reasons, we will only keep in our construction the blocks that behave reasonably well, see the forthcoming property \eqref{gluing:property P}, introduced in Section~\ref{gluing:sec:mass estimations}. We recursively define some random sets of integers. Let
\begin{equation}\label{gluing:eq:definition tildeGn_0}
\widetilde{G}_{n_0}=\enstq{n\in G_{n_0}}{\bB_n\text{ satisfies property \eqref{gluing:property P}}},
\end{equation}
and for $k\geq 0$,
\begin{equation}\label{gluing:eq:definition tildeGn_k+1}
\widetilde{G}_{n_{k+1}}=\enstq{n\in G_{n_{k+1}}}{\bB_n\text{ satisfies property \eqref{gluing:property P}};\ \exists i \in \widetilde{G}_{n_{k}},\ \bb_n\rightarrow \bb_i, \ X_{n-1}\in \overline{\bb}_i }.
\end{equation}
We then define for all $k\geq 0$, 
\[\cB_{k}=\bigcup_{n\in \widetilde{G}_{n_{k}}}\overline{\bb}_n.\]
In other words, $\cB_0$ is the union of all the upper-halves of the blocks $\bb_n$, for $n$ in $G_{n_0}$ for which $\bB_n$ behaves well, and $\cB_{k+1}$ is defined to be the union of all the upper-halves of the blocks of index  $n\in G_{n_{k+1}}$ that are grafted directly on $\cB_k$, and such that $\bB_n$ behaves well. Note that for the moment, $\cB_k$ can be empty.

We define the measure $\sum_{n\in\widetilde{G}_{n_k}}\bnu_n$ and refer to it as the \emph{mass measure}\footnote{It may appear more natural to define the mass measure as simply $\sum_{n=1}^{\infty}\nu_n$ which gives $\cT$ a finite mass. However, this definition would not have the property that for every set $S\subset \overline{\bb}_n$, we have $\abs{S}=\bnu_n(S)$. Indeed, when blocks can have an atom at their root, which is possible in the case $d=0$, the contribution of the mass added by the roots of future blocks should be counted in $\abs{S}$.}
on the $k$-th generation. To simplify notation we denote it $\abs{\, \cdot \, }$. We do not index it by $k$ since the index for which we consider it is always clear from the context.
We also define a sequence $(\pi_k)_{k\geq 0}$ of probability measures on $\cB_k$ by
\[\pi_k:=\frac{\abs{\, \cdot \cap \cB_k}}{\abs{\cB_k}},\]
the normalised mass measure on $\cB_k$.
Note that the sequence $(\pi_k)$ is only well-defined on the event where $\cB_k$ has non-zero mass for all $k$. In what follows we will ensure that it is the case for an event of strictly positive probability and only work conditionally on this event. Remark that, still conditionally on this event and on the event that $\cT$ is compact, which has probability $1$, the sequence $(\pi_k)_{k\geq 0}$ is a sequence of probability measures on a compact space, hence it admits at least one subsequential limit $\pi$ for the Lévy-Prokhorov distance. We can check using \cite[Lemma~17]{curien_random_2017}, which is essentially an application of the Portmanteau theorem, that $\pi$ is concentrated on $\bigcap_{k\geq 0 }\cT(\cB_k)\subset \cL$.

\subsubsection{Idea of the proof}\label{gluing:subsubsec:idea of the proof}
Let us briefly explain how the measure $\pi$ that we just constructed enables us to derive the appropriate lower bound for the Hausdorff dimension. We give the intuition for $\alpha<1/d$; the idea for $\alpha>1/d$ is very similar. We will be very rough for this sketch of proof and we keep the notation introduced above. Let us here forget that some blocks may not satisfy \eqref{gluing:property P}, and that we only deal with half-blocks.
\paragraph{Number of blocks in $\cB_k$}
Suppose that the number of blocks in $\cB_k$ evolves like a power of $n_k$, say $n_k^a$. Then the total weight of $\cB_k$ is $\abs{\cB_k}\approx n_k^a n_k^{-\beta}$, because all the blocks in $\cB_k$ have weight $\approx n_k^{-\beta}$. Since the probability that any block with index in $\intervalleentier{n_{k+1}}{2n_{k+1}}$ is grafted on $\cB_k$ is roughly $\abs{\cB_k}$, and since the number of blocks in $\cB_{k+1}$ is roughly $n_{k+1}^a$, we have \[ n_k^{\parametre a}\approx n_{k+1}^a\approx \abs{\cB_k}\cdot n_{k+1} \approx n_k^a n_k^{-\beta} n_k^\parametre.\]
Hence we have $a=\frac{\parametre-\beta}{\parametre -1}$, and so $\abs{\cB_k}\approx n_k^{\frac{\parametre(1-\beta)}{\parametre -1}}$.
\paragraph{Estimation on $\pi$} For each $k\geq 0$, the set $\cB_k$ is made of blocks of size $\approx n_k^{-\alpha}$. Let us suppose that the quantities of the form $\pi(\Ball(x,r))$ are well-approximated by $\pi_k(\Ball(x,r))$, whenever $r\in\intervalleff{n_{k+1}^{-\alpha}}{n_k^{-\alpha}}$. 
For $x$ close to $\cB_k$, such a ball typically intersects only one block of $\cB_k$, with weight roughly $n_k^{-\beta}$, and since the block is $d$-dimensional, the ball covers a proportion $\left(r/n_k^{-\alpha}\right)^d$ of this block. So $\abs{\Ball(x,r)\cap \cB_k}\approx n_k^{-\beta} \left(r/n_k^{-\alpha}\right)^d$, and 
\begin{align*}\pi(\Ball(x,r))\approx \pi_k(\Ball(x,r))= \frac{\abs{\Ball(x,r)\cap \cB_k}}{\abs{\cB_k}}&\approx r^dn_k^{-\beta +\alpha d -\parametre\frac{\beta-1}{\parametre-1}}\\
&\approx r^dn_k^{\frac{1}{\parametre -1}(\parametre(\alpha d -1) +\beta -\alpha d)}.
\end{align*}
Then, if $\alpha<1/d$, the last exponent is negative for $\parametre$ large enough. For such $\parametre$, using $r>n_{k+1}^{-\alpha}\approx n_k^{-\alpha\parametre}$ yields
\begin{equation}\label{gluing:heuristique lower bound}\pi(\Ball(x,r))\leq r^{d-\frac{1}{\alpha\parametre(\parametre-1)}(\parametre(\alpha d-1) +\beta -\alpha d )}.
\end{equation}
\paragraph{Optimisation}
We then choose $\parametre$ such that the exponent $d-\frac{\parametre(\alpha d-1) -\alpha d+\beta}{\alpha\parametre(\parametre-1)}$ is maximal. We get the value $\frac{2\beta-1-2\sqrt{(\beta-1)(\beta-\alpha d)}}{\alpha}$ which matches our upper-bound.

\paragraph{Plan of the proof}
Our goal is now to make those heuristics rigorous. First we will make some precise estimation on how the mass of the blocks with indices in $\widetilde{G}_{n_k}$ is spread on subsets of $\cT_{n_k-1}$. Then we will decompose each block of each $\cB_k$ into subsets that we call fragments for which our preceding estimation holds. After that, we use this decomposition to control the behaviour of the $(\pi_k)$, and also how the measures $\pi_k$ can approximate the limiting measure $\pi$. At the end we conclude by optimising on the parameters.

We will distinguish the two cases $d=0$ and $d>0$ and mostly work on the latter. We then explain quickly how the proof can be adapted to $d=0$, in which fewer technicalities are involved.
\subsubsection{Mass estimations}\label{gluing:sec:mass estimations}
Before proving our main proposition, we have to state some technical lemmas that will allow us to control how regularly the mass of $\cB_{k+1}$ is spread on $\cB_k$. Let us now define the property \eqref{gluing:property P}, in a different way whether $d=0$ or $d>0$. Let $C>0$ be a positive number and remind the definition of \eqref{gluing:controle boules} in Hypothesis~\ref{gluing:hypothese d}\ref{gluing:dimension d}. For $d>0$ we say that a pointed compact metric space endowed with a probability measure $(\bb,\bd,\brho,\bnu)$ satisfies \eqref{gluing:property P} iff
\begin{equation} \label{gluing:property P}\tag{$P_d$}
\left\lbrace\begin{aligned}
&C^{-1}\leq \haut(\bb)\leq C,\\
&\bnu\left(\enstq{x\in \bb}{\bd(\brho,x)\geq \frac{\haut(\bb)}{2}}\right)\geq C^{-1},\\
&(\bb,\bd,\brho,\bnu) \text{ satisfies \eqref{gluing:controle boules}} \ \text{with}\ r_0=C^{-1}.\\
\end{aligned}\right.
\end{equation}
For $d=0$ we say that $(\bb,\bd,\brho,\bnu)$ satisfies \eqref{gluing:property P0} iff $\bb$ is finite and 
\begin{equation} \label{gluing:property P0}\tag{$P_0$}
\left\lbrace\begin{aligned}
&C^{-1}\leq \haut(\bb)\leq C,\\
&\#\bb\leq C, \\
& \forall x\in\bb, \ \nu\left(\{x\}\right)\geq C^{-1}.
\end{aligned}\right.
\end{equation}

In any case, under Hypothesis~\ref{gluing:hypothese d}\ref{gluing:dimension d}, for any $d\geq 0$, we can choose $C$ such that the underlying block $(\bB,\bD,\bRho,\bNu)$ satisfies \eqref{gluing:property P} with a positive probability $p>0$.
\[p=\Pp{\bB \text{ satisfies }\eqref{gluing:property P}}>0.\] From now on we fix such a constant $C$. We also set $\bM:=\bNu\left(\enstq{x\in \bB}{\bD(\bRho,x)\geq \frac{\haut(\bB)}{2}}\right)$, and $m:=\Ecsq{\bM}{\bB \text{ satisfies } \eqref{gluing:property P}}$. We also denote by $\vM$ a random variable with the law of $\bM$ conditional on the event $\{\bB \text{ satisfies } \eqref{gluing:property P}\}$,
\begin{lemma}\label{gluing:mass estimations} Let $S$ be a subset of some $\bb_i$ with $i\leq n_{k}-1$,
	measurable with respect to $\cF_{n_k-1}$, the $\sigma$-field generated by the blocks and the gluings up to time $n_k-1$. Let $\chi(S)$ be the total mass of the union of the sets $\enstq{\overline{\bb}_n}{n\in G_{n_k}, \ \bb_n\rightarrow \bb_i,\ X_{n-1}\in S,\  \bB_n \ \text{satifies} \ \eqref{gluing:property P}}$, namely, the total mass of the half-blocks that are grafted on $S$ with index in $G_{n_k}$, and such that the corresponding blocks satisfy property \eqref{gluing:property P}.
	Then for all $x\in \intervalleff{0}{1}$,
	\[\Ppsq{\big|\chi(S)-a_k\abs{S}\big|>x a_k \abs{S}}{\cF_{n_k-1}}\leq 2\exp\left(-x^2 n_k^{1+\petito{1}}\abs{S}\right),\]
	where $\displaystyle a_k:=pm\sum_{i\in G_{n_k}}\frac{w_i}{W_{i-1}}$, is such that $\Ecsq{\chi(S)}{\cF_{n_k-1}}=a_k \abs{S}$ and the function $\petito{1}$ in the right-hand side does not depend on $x$.
\end{lemma}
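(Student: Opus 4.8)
The plan is to compute the conditional expectation exactly and then apply a concentration inequality to control the deviations. First I would identify $\chi(S)$ as a sum of independent contributions, one for each index $n\in G_{n_k}$. Conditionally on $\cF_{n_k-1}$ and on the sequence of blocks, the event $\{\bb_n\rightarrow\bb_i,\ X_{n-1}\in S\}$ has probability $\frac{w_n}{W_{n-1}}\cdot\frac{\bnu_i(S)}{\bnu_i(\bb_i)}$; but since the measure $\mu_{n-1}$ has total mass $W_{n-1}$ and $S\subset\bb_i$ carries mass $\abs{S}$ (recall the footnote defining $\abs{\cdot}$, which is precisely the reason we must use $\abs{S}$ and not $\bnu_i(S)$ — the mass contributed by roots of future blocks is already accounted for), this probability is exactly $\frac{w_n}{W_{n-1}}\abs{S}$ — wait, more carefully: the index $n$ ranges over $\intervalleentier{n_k}{2n_k}$, and by the time block $n$ is glued, the mass is $W_{n-1}$, so $\Ppsq{\bb_n\rightarrow S}{\cF_{n-1}}=\frac{w_n}{W_{n-1}}\abs{S}$. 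The contribution of $n$ to $\chi(S)$ is then $w_n\bM_n\ind{\bB_n\text{ satisfies }\eqref{gluing:property P}}\ind{\bb_n\rightarrow S}$, where $\bM_n$ is the relative mass of the upper half of $\bB_n$. Taking expectations and using independence of $\bB_n$ from the gluing event (and from $\cF_{n-1}$), the expected contribution is $w_n\cdot\Ec{\bM_n\ind{\bB_n\text{ sat. }\eqref{gluing:property P}}}\cdot\frac{\abs{S}}{W_{n-1}}=pm\,\frac{w_n}{W_{n-1}}\abs{S}$. Summing over $n\in G_{n_k}$ gives $\Ecsq{\chi(S)}{\cF_{n_k-1}}=a_k\abs{S}$ with $a_k=pm\sum_{i\in G_{n_k}}\frac{w_i}{W_{i-1}}$, as claimed. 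One subtlety: conditioning on $\cF_{n_k-1}$ fixes the blocks and gluings up to time $n_k-1$ but the summands for $n>n_k$ depend on the intermediate gluings (through whether $\bb_n$ is grafted on $\bb_i$, which could happen via $\bb_n\rightarrow\bb_i$ directly at any step between $n_k$ and $n$); I would handle this by conditioning successively and noting that the event $\{X_{n-1}\in S\}$ only requires $X_{n-1}$ to land in $S$, whose $\abs{\cdot}$-mass is frozen at time $n_k-1$, so the conditional probability given $\cF_{n-1}$ is still $\frac{w_n}{W_{n-1}}\abs{S}$ regardless of the intervening history — hence the summands, while not independent of each other through the $W_{n-1}$ normalisation being random, have a martingale-difference structure with respect to the filtration $(\cF_{n-1})$.

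For the concentration bound I would write $\chi(S)=\sum_{n\in G_{n_k}} Z_n$ where $Z_n$ is the mass contributed by index $n$, and observe that $(Z_n-\Ecsq{Z_n}{\cF_{n-1}})$ is a martingale difference sequence bounded by $Z_n\leq w_n\cdot C\leq n_k^{-\beta+\petito{1}}\cdot C$ (using $\bM_n\le 1$, the bound $w_n\le n^{-\beta+\petito{1}}$ from the hypotheses, and $n\asymp n_k$ on $G_{n_k}$). I would then apply Freedman's inequality (or Azuma–Hoeffding, or a Bernstein-type martingale inequality), whose exponent involves $x^2(a_k\abs{S})^2$ divided by a sum of conditional variances plus a max-increment term. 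The conditional variance of $Z_n$ is at most $\Ecsq{Z_n^2}{\cF_{n-1}}\le w_n^2\cdot\frac{\abs{S}}{W_{n-1}}\cdot p \le w_n\cdot n_k^{-\beta+\petito{1}}\cdot\frac{\abs{S}}{W_{n-1}}$, so the total conditional variance is bounded by $n_k^{-\beta+\petito{1}}\sum_{n\in G_{n_k}}\frac{w_n\abs{S}}{W_{n-1}}\le n_k^{-\beta+\petito{1}}\cdot a_k\abs{S}$ (up to constants). Plugging in, the exponent in Freedman's bound becomes of order $-\frac{x^2 (a_k\abs{S})^2}{n_k^{-\beta+\petito{1}}a_k\abs{S}+n_k^{-\beta+\petito{1}}x a_k\abs{S}}\asymp -x^2 a_k\abs{S}\,n_k^{\beta-\petito{1}}$. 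It then remains to check that $a_k\,n_k^{\beta}=n_k^{1+\petito{1}}$: indeed $a_k=pm\sum_{i\in G_{n_k}}\frac{w_i}{W_{i-1}}$, where $\#G_{n_k}=n_k^{1+\petito{1}}$, each $w_i\asymp n_k^{-\beta+\petito{1}}$, and $W_{i-1}$ for $i\asymp n_k$ — under $\diamond_{\alpha,\beta}$ with $\beta>1$ we have $W_n\to W_\infty<\infty$, so $W_{i-1}\asymp 1$, giving $a_k\asymp n_k^{1+\petito{1}}\cdot n_k^{-\beta+\petito{1}}=n_k^{1-\beta+\petito{1}}$, hence $a_k n_k^\beta=n_k^{1+\petito{1}}$ as needed. (If $W_\infty=\infty$ were possible one would instead track $W_{i-1}$ explicitly, but under Hyp.~\ref{gluing:condition beta grand} it is finite.) The restriction $x\le 1$ keeps the linear term in the denominator comparable to the variance term.

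\textbf{Main obstacle.} The delicate point is the dependence structure: $\chi(S)$ is \emph{not} a sum of independent random variables, because whether $\bb_n$ is eventually grafted on $\bb_i$ depends on the whole gluing history, and the normalisations $W_{n-1}$ (though deterministic under our hypotheses, since the weights are deterministic) interact with the random landing points. The clean fix is to work with the filtration $(\cF_{n})_{n\ge n_k-1}$ and verify carefully that $\Ecsq{Z_n}{\cF_{n-1}}=pm\,\frac{w_n}{W_{n-1}}\abs{S}$ for \emph{every} realisation of $\cF_{n-1}$ extending $\cF_{n_k-1}$ — this is where the specific definition of $\abs{S}$ via the footnote is essential, since it guarantees the $\abs{\cdot}$-mass of $S$ is not altered by gluings occurring strictly after time $n_k-1$ that land \emph{outside} $S$, and is increased in exactly the bookkept way by those landing \emph{inside} (but those are precisely the $Z_n$ we are summing, so there is no circularity once we process indices in increasing order). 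Granting that, the martingale concentration is routine. A secondary technical nuisance is absorbing all the $n^{\petito{1}}$ fluctuations (from $w_n$, $W_{n-1}$, $\#G_{n_k}$, and the $\petito{1}$ in the exponent) into a single $\petito{1}$ in the statement that is uniform in $x\in\intervalleff{0}{1}$; this is straightforward since none of these depend on $x$.
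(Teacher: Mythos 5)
Your proposal is correct and follows essentially the paper's route: compute $\Ecsq{\chi(S)}{\cF_{n_k-1}}=a_k\abs{S}$ exactly, then apply a Bernstein-type exponential bound with increments of order $\max_{n\in G_{n_k}}w_n\leq n_k^{-\beta+\petito{1}}$ and variance proxy of order $n_k^{-\beta+\petito{1}}a_k\abs{S}$, and finally use $\#G_{n_k}=n_k^{1+\petito{1}}$, $w_i= n_k^{-\beta+\petito{1}}$ for $i\in G_{n_k}$ and $w_1\leq W_{i-1}\leq W_\infty<\infty$ to get $a_k n_k^{\beta}=n_k^{1+\petito{1}}$, uniformly in $x\in\intervalleff{0}{1}$. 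The only genuine difference is packaging: the paper notes that, conditionally on $\cF_{n_k-1}$, the summands are \emph{independent}, since the event $\{\bb_n\rightarrow\bb_i,\ X_{n-1}\in S\}$ is decided by the single draw $(K_{n-1},X_{n-1})\sim\bar{\mu}_{n-1}^*$, which is independent of the intervening gluings, and has probability exactly $\abs{S}/W_{n-1}$ (intermediate blocks whose roots land in $S$ are neither counted in $\chi(S)$ nor do they change $\bnu_i(S)$); so the paper writes $\chi(S)=\sum_{n\in G_{n_k}}w_n\bM_n\ind{\bB_n\text{ satisfies }\eqref{gluing:property P}}\ind{U_n\leq\abs{S}/W_{n-1}}$ with independent uniforms and hand-rolls the Chernoff bound via $e^z\leq 1+z+3z^2$ for $\abs{z}\leq1$, taking $\theta=xn_k^{\beta-\epsilon}$. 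Your martingale/Freedman detour is valid but unnecessary; the dependence and the "circularity" you worry about simply do not arise, precisely because the lemma only counts blocks grafted directly onto $\bb_i$ at a point of $S$. One slip to fix: you twice state the grafting probability as $\frac{w_n}{W_{n-1}}\abs{S}$, whereas the correct value is $\frac{\abs{S}}{W_{n-1}}$ (that is, $\frac{w_i}{W_{n-1}}\cdot\frac{\bnu_i(S)}{w_i}$); since your expectation and variance computations actually use the correct formula, the final bound is unaffected.
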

This lemma roughly states that, for every subset $S\subset\bb_i$ for $i\in G_{n_k}$, if the subset has enough mass to attract a substantial number of the blocks coming between time $n_k$ and $2n_k$ then we have a good control on how the mass of $\cB_k$ grafted on $S$ can deviate from its expected value. 

\begin{proof}
	First we write $\chi(S)$ as:
	\[\chi(S)=\sum_{i\in G_{n_k}} \ind{U_i\leq\frac{\abs{S}}{W_{i-1}}}\ind{\bB_i \text{ satisfies \eqref{gluing:property P}}} \mathsf{M}_i w_i,\]
	where the $(U_i)$ are independent uniform variables on $\intervalleff{0}{1}$, independent of everything else. 
	Then we can compute
	\begin{align*}
	\Ecsq{\chi(S)}{\cF_{n_k-1}}&=\sum_{i\in G_{n_k}} \frac{\abs{S}}{W_{i-1}}p w_i \cdot \Ecsq{\bM_i}{\bB_i \text{ satisfies } \eqref{gluing:property P} }\\
	&=\abs{S}\cdot p m\left(\sum_{i\in G_{n_k}} \frac{w_i}{W_{i-1}}\right)=\abs{S}\cdot a_k.
	\end{align*}
	Let us bound the exponential moments of $\chi(S)$:
	\begin{align*}
	\Ecsq{\exp(\theta\chi(S))}{\cF_{n_k-1}}&=\prod_{i\in G_{n_k}} \left(\frac{p\abs{S}}{W_{i-1}}\cdot\Ecsq{e^{\theta w_i \bM_i}}{\bB_i \text{ satisfies } \eqref{gluing:property P}}+1-\frac{p\abs{S}}{W_{i-1}}\right)\\
	&=\prod_{i\in G_{n_k}} \left(1+\frac{p\abs{S}}{W_{i-1}}(\Ec{e^{\theta w_i \vM}}-1)\right)\\
	&\leq \exp\left(p\abs{S}\sum_{i\in G_{n_k}} \frac{1}{W_{i-1}}(\Ec{e^{\theta w_i \vM}}-1)\right),
	\end{align*}
	where we used the inequality $e^z\geq 1+z$, in the last line. Now,
	\begin{align*}
	&\mathrel{}\Ecsq{\exp(\theta(\chi(S)-a_k\abs{S}))}{\cF_{n_k-1}}\\
	&\leq \exp\left(p \abs{S}\sum_{i\in G_{n_k}} \frac{1}{W_{i-1}}(\Ec{e^{\theta w_i \vM}}-1)-\theta\abs{S}\sum_{i\in G_{n_k}} \frac{p mw_i}{W_{i-1}}\right)\\
	&\leq \exp\left(p\abs{S}\left(\sum_{i\in G_{n_k}} \frac{1}{W_{i-1}}(\Ec{e^{\theta w_i \vM}}-1-\theta m w_i)\right)\right)\\
	&\leq \exp\left(p\abs{S}\left(\sum_{i\in G_{n_k}} \frac{1}{W_{i-1}}c (\theta w_i)^2\right)\right).
	\end{align*}
	Here we used the fact that for $z\in\intervalleff{-1}{1}$, we have $ e^z\leq 1 + z+3z^2$ and so
	\begin{align*}
	\Ec{e^{z\vM}}-1-z \Ec{\vM} \leq \Ec{1+z\vM+3(z\vM)^2}-1-z \Ec{\vM}\leq 3 \Ec{\vM^2} z^2\leq c z^2,
	\end{align*}
	for $c$ a constant. Since we ask that $z\in\intervalleff{-1}{1}$, the computation above is valid if we restrict ourselves to $\abs{\theta} \leq (\sup_{i\in G_{n_k}}w_i)^{-1} = n_k^{\beta + \petito{1}}$. Note that we can use this inequality for negative values of $\theta$.
	Hence for $x\in\intervalleff{0}{1}$ we have
	\begin{align*}
	&{}\Ppsq{\abs{\chi(S)-a_k\abs{S}}>x a_k\abs{S}}{\cF_{n_k-1}}\\
	&\leq \Ppsq{\chi(S)-a_k\abs{S}>x a_k\abs{S}}{\cF_{n_k-1}}+\Ppsq{-(\chi(S)-a_k\abs{S})>x a_k\abs{S}}{\cF_{n_k-1}}\\
	&\leq \Ppsq{\exp\left(\theta(\chi(S)-a_k\abs{S})\right)>e^{\theta x a_k\abs{S}}}{\cF_{n_k-1}}+\Ppsq{\exp\left(-\theta\left(\chi(S)-a_k\abs{S}\right)\right)>e^{\theta x a_k\abs{S}}}{\cF_{n_k-1}}\\
	&\leq 2\exp\left(p\abs{S}\left(\sum_{i\in G_{n_k}} \frac{1}{W_{i-1}}c(\theta w_i)^2\right)-\theta x a_k\abs{S}\right)\\
	&=2\exp\left(p\abs{S}\left(\sum_{i\in G_{n_k}} \frac{1}{W_{i-1}}(c(\theta w_i)^2-\theta x m w_i)\right)\right).
	\end{align*}
	Taking $\theta = x n_k^{\beta-\epsilon}$ in the last inequality, which is possible for $n_k$ large enough, this gives
	\begin{align*}
	&\Ppsq{\big|\chi(S)-a_k\abs{S}\big|>x a_k\abs{S}}{\cF_{n_k-1}}\\
	&\leq 2\exp\left(p\abs{S} x^2\left(\sum_{i\in G_{n_k}} \frac{1}{W_{i-1}}(n_k^{\beta-\epsilon}w_i)(cn_k^{\beta-\epsilon}w_i-m)\right)\right).
	\end{align*}
	From our assumptions on the sequence $(w_n)$, we have $n_k^{\beta-\epsilon}w_i\rightarrow 0$ and hence $(cn_k^{\beta-\epsilon}w_i-m)$ is eventually smaller than $-\frac{m}{2}$, uniformly for $i\in G_{n_k}$. Also $\frac{1}{W_n}$ is always greater than $\frac{1}{W_\infty}$. Combining this with the last display we get, for $n_k$ large enough
	\begin{align*}
	&\Ppsq{\big|\chi(S)-a_k\abs{S}\big|>x a_k\abs{S}}{\cF_{n_k-1}}\\
	&\leq 2\exp\left(-x^2 \abs{S} \cdot \frac{pm}{2 W_\infty} \#G_{n_k} (\inf_{i\in G_{n_k}} w_i) n_k^{\beta-\epsilon}\right)\\
	&\leq 2\exp\left(-x^2 \abs{S} n_k^{1-\epsilon+\petito{1}}\right).
	\end{align*}
	Now for every $\epsilon>0$, this inequality is true for $n_k$ large enough, so this proves the lemma.
\end{proof}

Let us also state another technical lemma, the proof of which is in the Appendix \ref{gluing:subsec:computations}.
\begin{lemma}\label{gluing:lemme des ai}
	Suppose that Hypothesis~\ref{gluing:condition beta grand} is satisfied. We have, for the sequence $(a_i)$ defined in Lemma~\ref{gluing:mass estimations},
	\[\prod_{i=1}^k a_i=n_k^{\frac{\parametre(1-\beta)}{(\parametre-1)}+\petito{1}},\]
	where the $\petito{1}$ is considered when $k\rightarrow\infty$.
\end{lemma}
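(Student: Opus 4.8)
The plan is to estimate $a_k = pm\sum_{i\in G_{n_k}} \frac{w_i}{W_{i-1}}$ and then multiply. First I would recall that under Hypothesis~\ref{gluing:condition beta grand} we have $\beta>1$, so $W_\infty = \sum_{k\geq1} w_k<\infty$, which means $W_{i-1}$ is bounded between $w_1$ and $W_\infty$ — two positive constants. Hence $\sum_{i\in G_{n_k}} \frac{w_i}{W_{i-1}} = (W_\infty)^{\pm \petito{1}}\cdot \sum_{i\in G_{n_k}} w_i$, i.e.\ up to constants $a_k$ is of the same order as the total weight of the index set $G_{n_k}$. Now by the definition~\eqref{gluing:def G epsilon n} of $G_n^\epsilon$ (with $\epsilon = h(n_k)\to 0$, so it contributes only a $\petito{1}$ in the exponent), every $i\in G_{n_k}\subset \intervalleentier{n_k}{2n_k}$ satisfies $w_i\geq n_k^{-\beta-h(n_k)} = n_k^{-\beta+\petito{1}}$; and conversely the hypothesis $w_i\leq i^{-\beta+\petito{1}} = n_k^{-\beta+\petito{1}}$ for $i\in\intervalleentier{n_k}{2n_k}$ gives the matching upper bound. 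Combined with $\# G_{n_k} = \# G_{n_k}^{h(n_k)} = n_k^{1+\petito{1}}$ from Hypothesis~\ref{gluing:condition beta grand}, this yields $\sum_{i\in G_{n_k}} w_i = n_k^{1+\petito{1}}\cdot n_k^{-\beta+\petito{1}} = n_k^{1-\beta+\petito{1}}$, and therefore $a_k = n_k^{1-\beta+\petito{1}}$.

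Next I would take the product. Using $n_{k+1} = \lceil n_k^\parametre\rceil$, an easy induction gives $n_k = n_0^{\parametre^k(1+\petito{1})}$ as $k\to\infty$ (the ceiling operations perturb the exponent by only a multiplicative $(1+\petito{1})$ provided $n_0$ is large enough); in particular $\log n_k = \parametre^k \log n_0\,(1+\petito{1})$. Then
\[
\log\Bigl(\prod_{i=1}^k a_i\Bigr) = \sum_{i=1}^k \bigl((1-\beta)+\petito{1}\bigr)\log n_i = (1-\beta)\sum_{i=1}^k \log n_i + \sum_{i=1}^k \petito{1}\cdot \log n_i.
\]
Since $\sum_{i=1}^k \log n_i = \log n_0 \sum_{i=1}^k \parametre^i (1+\petito{1}) = \frac{\parametre}{\parametre-1}\parametre^k\log n_0\,(1+\petito{1}) = \frac{\parametre}{\parametre-1}\log n_k\,(1+\petito{1})$, and the error term $\sum_{i=1}^k \petito{1}\log n_i$ is also $\petito{1}\cdot \log n_k$ (the geometric growth of $\log n_i$ means the last few terms dominate, and their relative error tends to $0$), we conclude $\log\bigl(\prod_{i=1}^k a_i\bigr) = \frac{\parametre(1-\beta)}{\parametre-1}\log n_k + \petito{1}\cdot\log n_k$, which is exactly the claimed $\prod_{i=1}^k a_i = n_k^{\frac{\parametre(1-\beta)}{\parametre-1}+\petito{1}}$.

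The only slightly delicate point is bookkeeping the $\petito{1}$ error terms through the summation: one must check that a bound of the form $a_i = n_i^{1-\beta+\eta_i}$ with $\eta_i\to 0$ really does produce $\prod_{i=1}^k a_i = n_k^{(1-\beta)\cdot\frac{\parametre}{\parametre-1}+o(1)}$ and not something with an uncontrolled constant. This works because $\log n_i$ grows geometrically in $i$, so $\sum_{i=1}^k \eta_i \log n_i = o\bigl(\sum_{i=1}^k \log n_i\bigr) = o(\log n_k)$ by a standard Cesàro-type argument for geometric weights; I would spell this out. Everything else — the two-sided control of $w_i$ on $G_{n_k}$, the boundedness of $W_{i-1}$, and the cardinality estimate — is immediate from the hypotheses, so I expect this to be the one step worth writing carefully.
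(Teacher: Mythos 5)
Your proposal is correct and follows essentially the same route as the paper: first establish $a_k=n_k^{1-\beta+\petito{1}}$ from the two-sided control of $w_i$ on $G_{n_k}$, the cardinality estimate of Hypothesis~\ref{gluing:condition beta grand} and the boundedness of $W_{i-1}$ (the paper treats this as immediate), and then sum the logarithms using the geometric growth of $\log n_i$ coming from $n_{k+1}=\lceil n_k^{\parametre}\rceil$ to get the factor $\frac{\parametre}{\parametre-1}$ while absorbing the $\petito{1}$ errors, which is exactly the paper's bookkeeping (it bounds $\abs{\log n_{k-i}-\parametre^{-i}\log n_k}$ by a constant rather than using your multiplicative $(1+\petito{1})$, a cosmetic difference).
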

\subsubsection{Construction of fragments, case $d>0$}\label{gluing:construction of fragments}
\paragraph{Fragments of a random block}
Let us discuss how we can decompose a metric space into a partition of subsets that we call $r$-\emph{fragments}, all of them having a diameter of order $r$. Suppose that the random block $(\bB,\bD,\bRho,\bNu)$ comes with a sequence of random points $(X_n)_{n\geq1}$, which are i.i.d.\ with law $\bNu$, conditionally on $(\bB,\bD,\bRho,\bNu)$, and that this block satisfies Hypothesis~\ref{gluing:hypothese d}\ref{gluing:dimension d}, for some $d>0$. The following lemma ensures that in this setting, we can construct a partition $F(\bB,r)=(f^{(r)}_i)_{1\leq i\leq N}$ of $r$-fragments of $(\bB,\bD,\bRho,\bNu)$, which have approximately equal diameter and measure. 
Recall the function $\varphi$ defined in Hypothesis~\ref{gluing:hypothese d}\ref{gluing:dimension d}, and the notation $N_r(\bB) $ for the minimal number of balls of radius $r$ needed to cover $\bB$.

\begin{lemma}\label{gluing:lem:decomposition in fragments}
	Suppose that $(\bB,\bD,\bRho,\bNu)$ satisfies Hypothesis~\ref{gluing:hypothese d}\ref{gluing:dimension d} for some $d>0$. For any $r\in\intervalleff{0}{1}$, we construct a finite partition of Borel subsets $(f^{(r)}_i)_{1\leq i\leq N}$ of the block $(\bB,\bD,\bRho,\bNu)$ in a deterministic way from $(\bB,\bD,\bRho,\bNu)$ and the sequence of random points $(X_n)_{n\geq1}$. 
	There exists two functions $\psi$ and $\phi$ defined on the interval $\intervalleff{0}{1}$, which tend to $0$ at $0$ such that the following holds almost surely on the event $\{(\bB,\bD,\bRho,\bNu) \text{ satisfies \eqref{gluing:controle boules}} \}$, for any $r_0>3r$,
	\begin{enumerate}
		\item\label{gluing:it:taille masse fragment} For all $1\leq i\leq N$,
		\[\diam f_i \leq 2r \quad \text{and} \quad \left(\frac{r}{4}\right)^{d+\varphi(r/4)} \leq \bNu(f_i) \leq r^{d-\varphi(r)}. \]
		\item\label{gluing:it:nb fragments intersectant une boule} For all $r'<r_0/3$, we have
		\[ \forall x\in\bB, \quad \#\enstq{1\leq i \leq N}{\Ball (x,r')\cap f_i\neq\emptyset} \leq (r\vee r')^{d+\psi(r\vee r')}\cdot r^{-d+\phi(r)}.\]
		\item\label{gluing:it:borne nb fragments} The (random) number $N$ of fragments satisfies \[N\leq N_{r/4}\left(\bB\right) \quad \text{and}\quad  N\leq \left(\frac{r}{4}\right)^{-d-\varphi(r/4)}.\]
	\end{enumerate}
\end{lemma}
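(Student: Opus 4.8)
The plan is to build the partition greedily from a covering by balls of radius $r/4$, then "clean it up" into disjoint Borel pieces, using the i.i.d. points $(X_n)$ only to break ties deterministically and to make the construction measurable. First I would fix a realisation of $(\bB,\bD,\bRho,\bNu)$ on the event $\{(\bB,\bD,\bRho,\bNu)\text{ satisfies }\eqref{gluing:controle boules}\}$ with parameter $r_0>3r$. Consider the open balls of radius $r/4$ centred at the points $X_1, X_2, \dots$; since almost surely the sequence $(X_n)$ is $\bNu$-dense in $\supp(\bNu)$, and by \eqref{gluing:controle boules} every ball $\Ball(x,r/4)$ has positive $\bNu$-mass, these balls eventually cover $\bNu$-almost all of $\bB$. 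I would then extract from them a subcover which is minimal up to a factor (so its cardinality is at most $N_{r/4}(\bB)$) and turn this subcover $(\Ball(X_{j_1},r/4),\dots,\Ball(X_{j_N},r/4))$ into a partition $(f_i)_{1\le i\le N}$ by the usual disjointification: $f_i := \Ball(X_{j_i},r/4)\setminus\bigcup_{l<i}\Ball(X_{j_l},r/4)$, discarding empty pieces and, if necessary, adjoining the $\bNu$-null leftover set to $f_1$. Every $f_i$ is a Borel set of diameter at most $2\cdot(r/4) < 2r$, and the whole construction is deterministic given $(\bB,\bD,\bRho,\bNu)$ and $(X_n)$, as required.

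Next I would prove the three quantitative items. For \ref{gluing:it:taille masse fragment}: the upper bound $\bNu(f_i)\le \bNu(\Ball(X_{j_i},r/4))\le (r/4)^{d-\varphi(r/4)} \le r^{d-\varphi(r)}$ follows from \eqref{gluing:controle boules} (adjusting $\varphi$ by an innocuous constant, or just redefining the function $\varphi(r)$ to absorb the factor $4^d$); the lower bound needs more care because disjointification could in principle eat most of a ball. Here is where minimality of the subcover is used: in a cover that cannot be reduced, each ball $\Ball(X_{j_i},r/4)$ contains a point $y_i$ not covered by any other ball of the subcover, hence $\Ball(y_i, r/4)$ — wait, more simply $f_i \supset \Ball(y_i,\epsilon)$ for small $\epsilon$ is not enough; instead I would note $f_i$ contains the ball $\Ball(y_i, \delta)$ where $\delta$ is the distance from $y_i$ to the other balls, and a compactness/minimality argument bounds $\delta$ below by a constant multiple of $r$ — actually the cleanest route is: arrange the greedy selection so that the centres $X_{j_i}$ are $(r/4)$-separated (a maximal $(r/4)$-separated subset of the $X_n$ is automatically a cover by balls of radius $r/4$ and has size $\le N_{r/4}(\bB)$ by a standard packing argument). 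With $(r/4)$-separated centres, $\Ball(X_{j_i}, r/8)$ is disjoint from all other centres' balls of radius $r/8$, hence $f_i \supset \Ball(X_{j_i}, r/8)\setminus\{\text{null set}\}$ — no, the other balls have radius $r/4$. Let me instead keep centres $(r/2)$-separated: then $\Ball(X_{j_i}, r/4)$ meets no other $\Ball(X_{j_l},r/4)$ at its centre, and in fact $f_i \supset \Ball(X_{j_i}, r/4)\setminus\bigcup_{l\ne i}\Ball(X_{j_l},r/4) \supset \Ball(X_{j_i}, c r)$ for a small absolute constant $c$; then $\bNu(f_i)\ge (cr)^{d+\varphi(cr)}\ge (r/4)^{d+\varphi(r/4)}$ after adjusting constants in $\varphi$. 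This requires checking that an $(r/2)$-separated cover still exists with $\le N_{r/4}(\bB)$ pieces, which is the standard packing-vs-covering comparison. For \ref{gluing:it:borne nb fragments}: $N\le N_{r/4}(\bB)$ is immediate, and $N_{r/4}(\bB)\le (r/4)^{-d-\varphi(r/4)}$ follows by covering $\bB$ with disjoint balls of radius $r/8$ centred at a maximal separated set, each of mass $\ge (r/8)^{d+\varphi(r/8)}$ by \eqref{gluing:controle boules}, so their number is at most $(r/8)^{-d-\varphi(r/8)}$, which bounds $N_{r/4}(\bB)$ up to constants.

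For \ref{gluing:it:nb fragments intersectant une boule}, fix $x\in\bB$ and $r'<r_0/3$, and count the fragments meeting $\Ball(x,r')$. If $f_i\cap\Ball(x,r')\ne\emptyset$ then $f_i\subset\Ball(x, r'+2r)\subset \Ball(x, 3(r\vee r'))$, and since the $f_i$ are disjoint with $\bNu(f_i)\ge (r/4)^{d+\varphi(r/4)}$, the number of such $i$ is at most
\[
\frac{\bNu\bigl(\Ball(x,3(r\vee r'))\bigr)}{(r/4)^{d+\varphi(r/4)}}
\le \frac{\bigl(3(r\vee r')\bigr)^{d-\varphi(3(r\vee r'))}}{(r/4)^{d+\varphi(r/4)}}
\le (r\vee r')^{d+\psi(r\vee r')}\cdot r^{-d+\phi(r)},
\]
where the penultimate inequality uses \eqref{gluing:controle boules} at scale $3(r\vee r')<r_0$ (here the hypothesis $r_0 > 3r$, and $r'<r_0/3$, is exactly what guarantees $3(r\vee r')<r_0$), and $\psi,\phi$ are defined to absorb the constants and the error exponents $\varphi(\cdot)$; both tend to $0$ at $0$ because $\varphi$ does.

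The main obstacle I anticipate is \ref{gluing:it:taille masse fragment}, specifically the \emph{lower} bound on $\bNu(f_i)$ after disjointification: one must organise the greedy covering so that each surviving fragment still retains a ball of radius comparable to $r$, which forces the centres to be chosen $cr$-separated for a suitable constant and requires invoking the standard packing/covering comparison to keep the count below $N_{r/4}(\bB)$ — a slightly delicate bookkeeping of constants, all of which can be hidden in the (freely redefinable) error functions $\varphi,\psi,\phi$. Everything else — measurability of the construction in $(\bB,(X_n))$, the diameter bounds, and item \ref{gluing:it:nb fragments intersectant une boule} — is then a routine consequence of \eqref{gluing:controle boules} and disjointness, together with Hypothesis~\ref{gluing:hypothese d}\ref{gluing:nombre mininimal de boules} for the bound on $N_{r/4}(\bB)$ in expectation when it is later needed.
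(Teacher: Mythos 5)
There is a genuine gap at the heart of your construction: the step that is supposed to secure the lower bound $\bNu(f_i)\geq (r/4)^{d+\varphi(r/4)}$ while keeping a true partition. Your final fix takes centres that are $(r/2)$-separated and disjointifies the balls $\Ball(X_{j_i},r/4)$; but with $(r/2)$-separated centres these radius-$r/4$ balls are already pairwise disjoint, so the disjointification is vacuous and, more importantly, their union does not cover $\bB$ up to a $\bNu$-null set -- the uncovered region has in general substantial mass (an $(r/2)$-separated set only guarantees covering radius of order $r/2$, not $r/4$). The ``$(r/2)$-separated cover with balls of radius $r/4$ and at most $N_{r/4}(\bB)$ pieces'' that you invoke simply does not exist in general metric spaces, and the standard packing-vs-covering comparison gives no such thing: it gives that an $(r/2)$-separated set has cardinality at most $N_{r/4}(\bB)$ and that balls of radius roughly $r$ (not $r/4$) around a maximal such set cover. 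Dumping the leftover into $f_1$ is not an option either, since it destroys the bound $\diam f_1\leq 2r$ required in item \ref{gluing:it:taille masse fragment} and used again in item \ref{gluing:it:nb fragments intersectant une boule}. A second, related problem is your repeated appeal to ``adjusting constants in $\varphi$'': the function $\varphi$ is fixed by Hypothesis~\ref{gluing:hypothese d}\ref{gluing:dimension d} and appears verbatim in items \ref{gluing:it:taille masse fragment} and \ref{gluing:it:borne nb fragments}, so a bound like $\bNu(f_i)\geq (cr)^{d+\varphi(cr)}$ with an unspecified small $c<1/4$ does not yield $(r/4)^{d+\varphi(r/4)}$; only $\psi$ and $\phi$ in item \ref{gluing:it:nb fragments intersectant une boule} are at your disposal. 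Likewise, extracting from the balls $\Ball(X_n,r/4)$ a subcover of cardinality at most $N_{r/4}(\bB)$ is not justified (from that particular family one only gets a comparison with $N_{r/8}$ in general), whereas item \ref{gluing:it:borne nb fragments} demands exactly $N\leq N_{r/4}(\bB)$ and $N\leq (r/4)^{-d-\varphi(r/4)}$.

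The way out, which is what the paper does, is to abandon disjointified small balls and instead sandwich each fragment between two balls of different radii. Take a maximal $(r/2)$-separated subset $\{x_1,\dots,x_N\}$ of the points $(X_n)$ (built greedily; since under Hypothesis~\ref{gluing:hypothese d}\ref{gluing:dimension d} every ball has positive $\bNu$-mass, Borel--Cantelli shows the balls of radius $r$ around these centres a.s.\ cover $\bB$, cf.\ Lemma~\ref{gluing:lem:random element of prb}), and let $f_i$ be the nearest-centre (Voronoi) cell of $x_i$, ties broken by index. Then $\Ball(x_i,r/4)\subset f_i\subset\Ball(x_i,r)$: the outer inclusion gives $\diam f_i\leq 2r$ and the upper mass bound with the given $\varphi$, the inner inclusion gives the lower mass bound exactly at scale $r/4$, disjointness of the inner balls gives both $N\leq N_{r/4}(\bB)$ (each ball of an optimal $r/4$-cover contains at most one centre, Lemma~\ref{gluing:majoration M_r}) and $N\leq (r/4)^{-d-\varphi(r/4)}$ by summing masses, and your counting argument for item \ref{gluing:it:nb fragments intersectant une boule} then goes through essentially as you wrote it (it is the same mass-comparison as in Lemma~\ref{gluing:boules et fragments}, with the constants $3^d,4^d$ absorbed into the new functions $\psi,\phi$ only).
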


In the paper, we use this construction on $(\bB_n,\bD_n, \bRho_n, \bNu_n)$, assuming that for all $n\geq1$, a sequence $(X_{n,j})_{j\geq 1}$ is defined on the same probability space and that this sequence is i.i.d.\ with law $\bNu_n$, conditionally on $(\bB_n,\bD_n, \bRho_n, \bNu_n)$. For any $n\geq 1$ and $r>0$, we denote $F(\bB_n,r)=\enstq{f^{(r)}_i}{1\leq i \leq N}$ the partition of $\bB_n$ into (random) $r$-fragments which is given by the lemma. The proof of Lemma~\ref{gluing:lem:decomposition in fragments} can be found in the Appendix.

\paragraph{Decomposition of $\bb_n$ into fragments}

Fix a parameter $\eta\in\intervalleoo{0}{\frac{1}{d}}$. We want to decompose every $\bb_n$, for $n\in \widetilde{G}_{n_k}$, in fragments of size approximately $n_{k+1}^{-\eta}$. For that, it is sufficient to use $F(\bB_n,\mathsf r_n)$ the decomposition of $\bB_n$ in $\mathsf{r}_n$-fragments with $\mathsf{r}_n=(\lambda_n^{-1}\cdot n_{k+1}^{-\eta})$, which is given by Lemma~\ref{gluing:lem:decomposition in fragments}. Let us emphasise that these fragments are constructed as subsets of $\bB_n$, but we consider them as subsets of $\bb_n$ in what follows, without changing notation.
We define the set $F_k$ is as the collection of all these fragments coming from every $\bb_n$ with $n\in \widetilde{G}_{n_k}$. We have of course 
\begin{align}
\bigcup_{f\in F_k} f = \bigcup_{n\in \widetilde{ G}_{n_k}}\bb_n.
\end{align}
In our construction, we decided to keep only the blocks that were sufficiently well-behaved with respect to some properties that will be useful now. Recall the definition of the random set $\widetilde{G}_{n_k}$ in equations \eqref{gluing:eq:definition tildeGn_0} and \eqref{gluing:eq:definition tildeGn_k+1}.
Remark that, from the definition of $G_{n_k}$, we have,
\[c_k:=\min_{n\in  G_{n_k}}\left(\lambda_n^{-1}\cdot n_{k+1}^{-\eta}\right)= n_k^{\alpha-\parametre\eta+\petito{1}} \text{\quad and \quad} C_k:=\max_{n\in G_{n_k}}\left(\lambda_n^{-1}\cdot n_{k+1}^{-\eta}\right) = n_k^{\alpha-\parametre\eta+\petito{1}}.\]
If $\parametre$ and $\eta$ are such that $\parametre>\frac{\alpha}{\eta}$, then the last exponent is strictly negative, and so we can take $n_0$ sufficiently large so that $C_k<C^{-1}/3$, for all $k\geq 0$.
For $n\in \widetilde{G}_{n_k}$, we know that $\bB_n$ satisfies \eqref{gluing:controle boules} with $r_0=C^{-1}$. Hence, for all $n\in \widetilde{ G}_{n_k}$, we have $3\mathsf{r}_n\leq r_0$, and so the conclusions of Lemma~\ref{gluing:lem:decomposition in fragments} hold simultaneously for all the decompositions $F(\bB_n,\mathsf r_n)$ for $n\in \widetilde{ G}_{n_k}$.

\paragraph{Control on the mass and number of fragments}Recall the function $h$ that we defined in \eqref{gluing:def Gn hn}, which tends to $0$ at infinity, and the function $\varphi$ specified in Hypothesis~\ref{gluing:hypothese d}\ref{gluing:dimension d}, which tends to $0$ at $0$.
Thanks to Lemma~\ref{gluing:lem:decomposition in fragments}, we get, for all $f\in F_k$ such that $f\subset\bb_n$, 
\begin{align*}\abs{f}=w_n\cdot \bNu_n(f)
&\underset{\eqref{gluing:def Gn hn},\ \text{Lem.}\ref{gluing:lem:decomposition in fragments}\ref{gluing:it:taille masse fragment}}{\geq} n_k^{-\beta-h(n_k)}\cdot (\lambda_n^{-1}\cdot n_{k+1}^{-\eta})^{d+\varphi(n_k^{-\alpha d + \alpha +\petito{1}})}\\
&\geq n_k^{-\beta-h(n_k)}\cdot c_k^{d+\varphi(C_k)}.
\end{align*}
Note that the last quantity is deterministic and only depends on $n_k$, and so almost surely, 
\begin{align}\label{gluing:eq:min masse fragment}
\min_{f\in F_k}\abs{f}&\geq n_k^{-\beta-h(n_k)}\cdot c_k^{d+\varphi(C_k)}\notag \\
&\geq n_k^{-\eta d \parametre +\alpha d-\beta + \petito{1}}= n_{k+1}^{-\eta d + \frac{1}{\parametre}(\alpha d - \beta)+\petito{1}}.
\end{align}
Note that a similar computation using upper-bounds instead of lower-bounds also yields, almost surely,
\begin{equation}\label{gluing:max poids fragment}\max_{f\in F_k}\abs{f}\leq n_k^{-\beta+\petito{1}}\cdot C_k^{d-\varphi(c_k)}\leq n_{k}^{-\eta d \parametre+\alpha d-\beta+\petito{1}},
\end{equation}
where the right-hand side is deterministic. Also, from Lemma~\ref{gluing:lem:decomposition in fragments}\ref{gluing:it:borne nb fragments}, we get that the number of fragments obtained from the block $\bb_n$ by that construction is bounded above by $\left(\mathsf{r}_n/4\right)^{-d-\varphi(\mathsf{r}_n/4)}$, with $\mathsf{r}_n=\lambda_n^{-1}\cdot n_{k+1}^{-\eta}=n_k^{\alpha-\parametre\eta+\petito{1}}$, and so at the end, the total number of fragments in $F_k$ is bounded above by a deterministic quantity which grows at most polynomially in $n_k$.

\subsubsection{Construction of fragments, case $d=0$}
In this case, we will consider the finite number of points of each block as a decomposition into fragments, hence we set $F_k=\enstq{\{x\}}{x\in \bb_n,\  n\in\widetilde{G}_{n_k}}.$ Note that
\[\forall f\in F_k, \ f\subset\bb_n, \quad \abs{f}=w_n\cdot \bNu_n(f)
\underset{\eqref{gluing:def Gn hn},\ \eqref{gluing:property P0}}{\geq} n_k^{-\beta-h(n_k)}\cdot C^{-1},\]
and so the equations \eqref{gluing:eq:min masse fragment} and \eqref{gluing:max poids fragment} are still valid when $d=0$, and also the number of fragments in $F_k$ grows linearly, hence polynomially in $n_k$.

\subsubsection{Using the mass estimations}Recall that we fixed a parameter $\eta\in\intervalleoo{0}{\frac{1}{d}}$. We let $0<\epsilon<(1-\eta d)$. If $\parametre$ and $\eta$ are such that $\parametre>\frac{\beta-\alpha d}{1-\eta d-\epsilon}$, then $-\eta d +\frac{1}{\parametre}(\alpha d -\beta)>-1+\epsilon$. And so we get from \eqref{gluing:eq:min masse fragment} that $\min_{f\in F_k}\abs{f}\geq  n_{k+1}^{-1+\epsilon+\petito{1}}$. We can apply the result of Lemma~\ref{gluing:mass estimations} for every fragment $f\in F_{k}$, with $x=n_{k+1}^{-\epsilon/4}$,
\begin{align*}
&\Ppsq{\big|\chi(f)-a_{k+1}\abs{f}\big|>n_{k+1}^{-\epsilon/4} a_{k+1} \abs{f}}{\cF_{n_{k+1}-1}}\\
&\leq 2\exp\left(-\left(n_{k+1} ^{-\epsilon/4}\right)^2  n_{k+1} ^{1+\petito{1}} \min_{f\in F_k}\abs{f}\right)\\
&\leq 2\exp\left(-n_{k+1} ^{-\epsilon/2}  n_{k+1} ^{1+\petito{1}} n_{k+1} ^{-1+\epsilon+\petito{1}}\right)\leq 2\exp\left(- n_{k+1} ^{\epsilon/4}\right) \quad \text{for $n_{k+1}$ large enough.}
\end{align*}
For that, again, we impose that $n_0$ is large enough such that the last display is true for all $k$ and for all $f$. Now we can sum this over all fragments,
\begin{align*}
&\Ec{\sum_{k=0}^\infty\sum_{f\in F_{k}}\Ppsq{\big|\chi(f)-a_{k+1}\abs{f}\big|>n_{k+1}^{-\epsilon/4} a_{k+1} \abs{f} }{\cF_{n_{k+1}-1}}}\\
&\leq \sum_{k=0}^\infty \Ec{\#F_{k}}\cdot 2\exp\left(-n_{k+1}^{\epsilon/4}\right) \underset{n_0\rightarrow\infty}{\longrightarrow}0,
\end{align*}
since $(\#F_{k})$ is almost surely bounded by a deterministic quantity which grows at most polynomially in $n_k$. 
The same is true for the $\cB_k$,
\[\Ec{\sum_{k=0}^\infty\Ppsq{\big|\chi(\cB_k)-a_{k+1}\abs{\cB_k}\big|>n_{k+1}^{-\epsilon/4} a_{k+1} \abs{\cB_k} }{\cF_{n_{k+1}-1}}}\underset{n_0\rightarrow\infty}{\longrightarrow}0.\]

In the rest of Section~\ref{gluing:subsec:construction of measures on the leaves}, we will fix $n_0$ large enough and work on the event of large probability $\cE$ on which we have, for all $k\geq 0$ and for all $f\in F_k$
\begin{align}\label{gluing:eq:bounds mass grafted on fragments}
\big|\chi(\cB_k)-a_{k+1}\abs{\cB_k}\big|\leq n_{k+1}^{-\epsilon/4} a_{k+1} \abs{\cB_k} \quad \text{and} \quad  \big|\chi(f)-a_{k+1}\abs{f}\big|\leq n_{k+1}^{-\epsilon/4} a_{k+1} \abs{f}.
\end{align} 
Remark that thanks to Section~\ref{gluing:monotonicity of Hausdorff dimension}, giving a lower bound of the Hausdorff dimension on a set of positive probability is enough to prove that the bound holds almost surely.
Note that this construction depends on the parameters $\eta$ and $\epsilon$ and $\parametre$. The parameters must satisfy \begin{equation}\eta\in\intervalleoo{0}{\frac{1}{d}}, \quad \epsilon\in\intervalleoo{0}{1-\eta d}, \quad\parametre>\max\left(\frac{\alpha}{\eta},\frac{\beta-\alpha d}{1-\eta d-\epsilon}\right),\label{gluing:eq:parametre dans l'ordre}\end{equation}
and we can choose them in this particular order. 


\subsubsection{Control on the limiting measure}
In this section, the values of $\eta$ and $\epsilon$ and $n_0$ are fixed in such a way that the construction of the previous section holds. Note that everything in the section implicitly depends on those values.
On the event $\cE$, if we consider a fragment $f\in F_k$, we have a very good control on the values of $\pi_i(\cT(f))$ for $i\geq k$. Indeed set \[c_1=\prod_{k=0}^\infty (1-n_{k+1}^{-\frac{\epsilon}{4}})\quad \text{and} \quad c_2=\prod_{k=0}^\infty (1+n_{k+1}^{-\frac{\epsilon}{4}}).\]
Remark that both $c_1$ and $c_2$ are strictly positive real numbers. Using in cascade the estimations \eqref{gluing:eq:bounds mass grafted on fragments} which hold on the event $\cE$, we get that for $f\in F_k$ and $i\geq k$,
\begin{equation}\label{gluing:controle masse Tf}
\abs{\cT(f)\cap\cB_i}\leq c_2 \abs{f}\left(\prod_{j=k+1}^i a_j\right).
\end{equation}
In fact we can use the same argument for $\cB_k$, which is not empty on the event $\cE$. For $k$ large enough we can write
\begin{equation}\label{gluing:controle B_i}\abs{\cB_i}\in \abs{\cB_k}\left(\prod_{j=k+1}^i a_j\right)\cdot\intervalleff{c_1}{c_2}.\end{equation}
Remark that \eqref{gluing:controle B_i} combined with Lemma~\ref{gluing:lemme des ai} yields that almost surely on $\cE$,
\begin{equation}\label{gluing:controle masse Bk}
n_k^{\frac{\parametre(1-\beta)}{(\parametre-1)}+\petito{1}}\leq \abs{\cB_k}\leq n_k^{\frac{\parametre(1-\beta)}{(\parametre-1)}+\petito{1}},
\end{equation}
where the upper and lower-bound are both deterministic.
For $\pi_i$ the normalized mass measure on $\cB_i$, we have:
\[\pi_i(\cT(f))=\frac{\abs{\cT(f)\cap\cB_i}}{\abs{\cB_i}}\underset{\eqref{gluing:controle masse Tf}, \eqref{gluing:controle B_i}}{\leq} \frac{c_2}{c_1}\cdot \frac{\abs{f}}{\abs{\cB_k}}.\]
If $\pi$ is a sub-sequential limit of the $(\pi_k)$, using Portmanteau theorem (remark that $\pi$ is concentrated on the leaves and the leaves of $\cT(f)$ belong to the interior of $\cT(f)$), we get
\[\pi(\cT(f))\leq \frac{c_2}{c_1}\cdot\frac{\abs{f}}{\abs{\cB_k}}.\]
And then, \[\max_{f\in F_k}\pi(\cT(f))\leq \frac{c_2}{c_1} \frac{1}{\abs{\cB_k}}\max_{f\in F_k}\abs{f}.\]
We can now write, for all $r>0$, for all $x\in\cT$,
\begin{align*}\pi(\Ball (x,r)) \leq \sum_{f\in F_k, \ f\cap \Ball (x,r)\neq \emptyset} \pi(\cT(f))\leq \#\{f\in F_k, \ f\cap \Ball (x,r)\neq\emptyset\} \cdot \max_{f\in F_k}\pi(\cT(f)).
\end{align*}
Putting everything together we get 
\begin{equation}\label{gluing:majoration pi k}
\pi(\Ball (x,r))\leq  \#\{f\in F_k, \ f\cap \Ball (x,r)\neq\emptyset\} \cdot \frac{c_2}{c_1} \frac{1}{\abs{\cB_k}}\max_{f\in  F_k}\abs{f}.
\end{equation}

\subsubsection{Control on the number of fragments intersecting a ball}
From \eqref{gluing:majoration pi k}, we see that the last thing that we have to estimate is $\#\{f\in F_k, \ f\cap \Ball (x,r)\neq\emptyset\}$, the number of fragments of $F_k$ that have a non-empty intersection with a ball of radius $r$. Since the measure $\pi$ only charges $\bigcap_{k\geq 1}\cT(\cB_k)$, we are only interested in balls centred around points belonging to this set. Let us fix some notation again. For all $k\geq0$, we set 
\[\Delta_k:=\inf\enstq{\dist(x,y)}{x\in \cB_{k-1}, \ y \in \cB_{k}},\]
the set distance between $\cB_{k-1}$ and level $\cB_k$, for the integers $k$ for which it is possible. On the event $\cE$, this quantity is well-defined for all $k\geq1$ and the following upper and lower-bounds are almost surely satisfied
\begin{align}\label{gluing:eq:bounds Delta_k}n_k^{-\alpha +\petito1}= \frac{C^{-1}}{2} \min_{n\in G_{n_k}}\lambda_n \leq \Delta_k \leq C \max_{n\in G_{n_k}}\lambda_n  = n_k^{-\alpha +\petito1}.
\end{align}
Now let us state a lemma.
\begin{lemma}\label{gluing:nombre de fragments}Let $x\in\bigcap_{k\geq 1}\cT(\cB_k)$. For $k\geq0$, we denote $x_k:=[x]_{2n_k}\in\cB_k$. If $\bb_n$ is the block of $\cB_k$ such that $x_k\in \bb_n$, we have, $\forall r\in \intervalleff{0}{\Delta_k},$
	\begin{equation}\label{gluing:inclusion fragments}\enstq{f\in F_k}{f\cap \Ball (x,r)\neq\emptyset}\subset \enstq{f\in F_k}{\ f\cap \Ball (x_k,r)\cap \bb_n\neq \emptyset}.
	\end{equation}
\end{lemma}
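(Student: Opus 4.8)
The plan is to first identify the point $x_k=[x]_{2n_k}$ explicitly, and then to deduce the inclusion from two elementary geometric facts: that $\Ball(x,r)$ and $\Ball(x_k,r)$ have the same trace on $\cT_{2n_k}$, and that for $r\leq\Delta_k$ no fragment carried by a block of $\cB_k$ other than $\bb_n$ can meet $\Ball(x_k,r)$. Throughout I use two standard consequences of the construction of $\cT$ by successive gluings: each $\brho_i$ is a cut point of $\cT$, with $\dist(a,b)=\dist(a,\brho_i)+\dist(\brho_i,b)$ whenever $a\in\cT(\bb_i)$ and $b\notin\cT(\bb_i)\setminus\{\brho_i\}$; and two substructures $\cT(\bb_a)$, $\cT(\bb_b)$ descending from $\prec$-incomparable blocks are disjoint.

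First I would build the ancestral chain of $x$. Since $x\in\bigcap_{j\geq1}\cT(\cB_j)$, for each $j\geq1$ there is $n^{(j)}\in\widetilde{G}_{n_j}$ with $x\in\cT(\overline{\bb}_{n^{(j)}})$; using that $\cT(\overline{\bb}_{n^{(j+1)}})\subset\cT(\overline{\bb}_i)$ whenever $\bb_{n^{(j+1)}}$ is grafted on $\overline{\bb}_i$, and that distinct blocks of $\widetilde{G}_{n_j}$ are $\prec$-incomparable (grafting strictly lowers the level), this chain is consistent: $\bb_{n^{(j)}}$ is the parent of $\bb_{n^{(j+1)}}$ in $\mathsf{T}$ and $X_{n^{(j+1)}-1}\in\overline{\bb}_{n^{(j)}}$. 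Then I would show $x_k=\brho_{n^{(k+1)}}$. Since $\parametre>1$ and $n_0$ is large, $n^{(k+1)}\geq n_{k+1}=\lceil n_k^{\parametre}\rceil>2n_k\geq n^{(k)}$, so every block occurring in $\cT(\bb_{n^{(k+1)}})$ has index $\geq n^{(k+1)}>2n_k$ except for $\brho_{n^{(k+1)}}=X_{n^{(k+1)}-1}\in\bb_{n^{(k)}}\subset\cT_{2n_k}$; hence $\cT(\bb_{n^{(k+1)}})\cap\cT_{2n_k}=\{\brho_{n^{(k+1)}}\}$. Using the cut-point property at $\brho_{n^{(k+1)}}$ together with $x\in\cT(\bb_{n^{(k+1)}})$, for every $z\in\cT_{2n_k}$ we get $\dist(x,z)=\dist(x,\brho_{n^{(k+1)}})+\dist(\brho_{n^{(k+1)}},z)$, which is minimised at $z=\brho_{n^{(k+1)}}$. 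So $x_k=\brho_{n^{(k+1)}}\in\overline{\bb}_{n^{(k)}}\subset\cB_k$, and the block of the statement is $\bb_n$ with $n=n^{(k)}$.

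Two consequences follow. First, for $z\in\cT_{2n_k}$ we have $\dist(x_k,z)=\dist(x,z)-\dist(x,x_k)\leq\dist(x,z)$, so $\Ball(x,r)\cap\cT_{2n_k}\subset\Ball(x_k,r)$ for every $r>0$. Second, let $m\in\widetilde{G}_{n_k}$ with $m\neq n$. Then $\bb_m$ and $\bb_n$ are $\prec$-incomparable, so $\bb_m\cap\cT(\bb_n)=\emptyset$; since $x_k\in\overline{\bb}_n\subset\cT(\bb_n)$, the cut-point property at $\brho_n$ gives $\dist(x_k,y)\geq\dist(x_k,\brho_n)$ for all $y\in\bb_m$, and $\dist(x_k,\brho_n)\geq\Delta_k$ because $x_k\in\overline{\bb}_n\subset\cB_k$ while $\brho_n=X_{n-1}\in\overline{\bb}_j\subset\cB_{k-1}$, where $j\in\widetilde{G}_{n_{k-1}}$ is the parent of $n$.

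It remains to conclude. Let $r\in\intervalleff{0}{\Delta_k}$ and $f\in F_k$ with $f\cap\Ball(x,r)\neq\emptyset$. By construction $f\subset\bb_m$ for some $m\in\widetilde{G}_{n_k}$, and $m\leq2n_k$ gives $f\subset\bb_m\subset\cT_{2n_k}$, so the first consequence yields $\emptyset\neq f\cap\Ball(x,r)\subset f\cap\Ball(x_k,r)$. If $m\neq n$, the second consequence gives $\dist(x_k,y)\geq\Delta_k\geq r$ for all $y\in f$, contradicting $f\cap\Ball(x_k,r)\neq\emptyset$; hence $m=n$, so $f\subset\bb_n$ and $f\cap\Ball(x_k,r)\cap\bb_n=f\cap\Ball(x_k,r)\neq\emptyset$, which is the desired inclusion. (For $k=0$ the quantity $\Delta_0$ is not defined and the statement is read for $k\geq1$.) The one genuinely delicate point is the identification $x_k=\brho_{n^{(k+1)}}$, which is where the recursive definition of $\widetilde{G}_{n_k}$ and the inequality $n_{k+1}>2n_k$ are used in an essential way; everything else rests only on the two structural features of the glued space recalled above.
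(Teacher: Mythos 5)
Your proof is correct; the paper leaves this lemma to the reader, and your argument — identifying $x_k$ with the gluing point $\brho_{n^{(k+1)}}$ of the generation-$(k+1)$ ancestor block via the cut-point structure of the gluing, then using $\dist(x_k,\brho_n)\geq\Delta_k$ to rule out fragments carried by any other block of $\cB_k$ — is exactly the intended elementary one. The only slight overstatement is the intermediate claim that $\bb_m\cap\cT(\bb_n)=\emptyset$, which in the atomic case $d=0$ can fail at the single shared root point $\brho_m=\brho_n$ (when two blocks of $\widetilde{G}_{n_k}$ are grafted at the same atom); since at that point the inequality $\dist(x_k,y)\geq\dist(x_k,\brho_n)$ holds trivially with equality, this does not affect the conclusion.
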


The proof of this lemma is simple and left to the reader. It tells us is that in fact, if $r$ is small enough, then all the fragments $f\in F_k$ who intersect the ball of centre $x$ and radius $r$ belong to the same block. 
This will allow us in the sequel, combined with Lemma~\ref{gluing:lem:decomposition in fragments}\ref{gluing:it:nb fragments intersectant une boule}, to bound the number of fragments involved, which is what we wanted.

\subsubsection{Obtaining the lower-bound}
In order to get the lower-bound on the Hausdorff dimension of $\cL$ matching that of the theorem, we have to distinguish between the case $\alpha<\frac{1}{d}$ and the case $\alpha>\frac{1}{d}$. The case $\alpha=\frac{1}{d}$ can be recovered by a monotonicity argument, as seen in Section~\ref{gluing:monotonicity of Hausdorff dimension}. Whenever $d=0$, we have $\frac{1}{d}=+\infty$ and only the first case can happen.
\paragraph{Case $\beta>1$ and $\alpha<1/d$}
We use the construction of Section~\ref{gluing:construction of fragments} with $\eta=\alpha$. Recall \eqref{gluing:eq:parametre dans l'ordre} for the admissible parameters of the construction. In this case, if $\epsilon$ is fixed and small enough, the only condition on $\parametre$ implied by \eqref{gluing:eq:parametre dans l'ordre} is $\parametre>\frac{\beta-\alpha d}{1-\alpha d-\epsilon}$ since $\parametre>\frac{\alpha}{\eta}$ reduces to $\parametre>1$, which is already contained in the previous inequality because $\frac{\beta-\alpha d}{1-\alpha d-\epsilon}>1$. We define
\begin{equation*}
\Lambda_k:= \Delta_k \wedge \left(\frac{\min_{n\in G_{n_k}}\lambda_n}{\log n_k}\right).
\end{equation*}
Using \eqref{gluing:eq:bounds Delta_k}, we get $n_k^{-\alpha+\petito{1}}\leq \Lambda_k\leq n_k^{-\alpha+\petito{1}}$, almost surely, with deterministic bounds. Here the choice of $\frac{1}{\log n_k}$ is rather arbitrary and we could change it to any quantity that tends to $0$ and is $n_k^{\petito{1}}$ as $n_k\rightarrow\infty$. 
Now we claim the following
\begin{lemma}\label{gluing:lem:borne nombre fragments}
	On the event $\cE$, for all $k\geq 0,$ for any $d\geq 0$, we almost surely have
	\begin{equation}\label{gluing:eq:borne nombre fragments}\forall r\in \intervalleff{\Lambda_{k+1}}{\Lambda_k},  \quad \#\enstq{f\in F_k}{ f\cap \Ball (x,r)\neq\emptyset}\leq r^{d+\petito{1}} n_{k}^{\alpha \parametre d +\petito{1}}.
	\end{equation}
\end{lemma}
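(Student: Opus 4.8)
The plan is to reduce the count of fragments of $F_k$ meeting a small ball to a count \emph{inside a single block}, and then feed that into the fragment-decomposition estimate of Lemma~\ref{gluing:lem:decomposition in fragments}\ref{gluing:it:nb fragments intersectant une boule}, after which everything becomes a matter of converting length scales into powers of $n_k$. Fix $x\in\bigcap_{k\geq1}\cT(\cB_k)$ and $k\geq0$; recall $x_k=[x]_{2n_k}\in\cB_k$ and let $\bb_n$ be the block of $\cB_k$ with $x_k\in\bb_n$. The first step is to apply Lemma~\ref{gluing:nombre de fragments}: since $r\leq\Lambda_k\leq\Delta_k$ by \eqref{gluing:eq:bounds Delta_k}, every fragment of $F_k$ meeting $\Ball(x,r)$ lies in $\bb_n$ and in fact meets $\Ball(x_k,r)\cap\bb_n$. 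This geometric reduction is what makes the bound possible at all: without it a ball of radius $r$ could a priori hit fragments coming from many distinct blocks of the level $\cB_k$, and no useful estimate would follow.

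The second step is to pass to the abstract block $\bB_n$, whose metric is $\lambda_n^{-1}$ times that of $\bb_n\subset\cT$. There the fragments of $F_k$ contained in $\bb_n$ are exactly the $\mathsf{r}_n$-fragments of $F(\bB_n,\mathsf{r}_n)$ with $\mathsf{r}_n=\lambda_n^{-1}n_{k+1}^{-\eta}$ and $\eta=\alpha$, and $\Ball(x_k,r)\cap\bb_n$ is contained in a ball of radius $r':=\lambda_n^{-1}r$ of $\bB_n$. Before invoking Lemma~\ref{gluing:lem:decomposition in fragments}\ref{gluing:it:nb fragments intersectant une boule} I would check its hypotheses uniformly over the relevant blocks: since $n\in\widetilde{G}_{n_k}\subset G_{n_k}$, the block $\bB_n$ satisfies \eqref{gluing:controle boules} with $r_0=C^{-1}$; for $n_0$ large we have $3\mathsf{r}_n\leq3C_k<C^{-1}=r_0$ (this is already used in the construction of the fragments), and $r'=\lambda_n^{-1}r\leq\lambda_n^{-1}\Lambda_k\leq 1/\log n_k<r_0/3$ since $\Lambda_k\leq\min_{m\in G_{n_k}}\lambda_m/\log n_k\leq\lambda_n/\log n_k$. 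Hence that lemma gives
\[\#\enstq{f\in F_k}{f\cap\Ball(x,r)\neq\emptyset}\leq(\mathsf{r}_n\vee r')^{d+\psi(\mathsf{r}_n\vee r')}\cdot\mathsf{r}_n^{-d+\phi(\mathsf{r}_n)}.\]

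The third step is to rewrite this in powers of $n_k$. For $n\in G_{n_k}$ one has $\lambda_n=n_k^{-\alpha+\petito{1}}$ and $n_{k+1}=n_k^{\parametre+\petito{1}}$, so $\mathsf{r}_n^{-d}=\lambda_n^{d}n_{k+1}^{\alpha d}$ and $\mathsf{r}_n\vee r'=\lambda_n^{-1}(n_{k+1}^{-\alpha}\vee r)$, whence $(\mathsf{r}_n\vee r')^{d}\mathsf{r}_n^{-d}=(n_{k+1}^{-\alpha}\vee r)^{d}\,n_{k+1}^{\alpha d}=(n_{k+1}^{-\alpha}\vee r)^{d}\,n_k^{\alpha\parametre d+\petito{1}}$. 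Both $\mathsf{r}_n$ and $\mathsf{r}_n\vee r'$ tend to $0$ uniformly (bounded by $C_k$ and by $1/\log n_k$), so the exponent corrections $\psi(\cdot)$ and $\phi(\cdot)$ are $\petito{1}$ and contribute only further $n_k^{\petito{1}}$ factors. Finally, from \eqref{gluing:eq:bounds Delta_k} and the definition of $G_{n_{k+1}}$ one gets $r\geq\Lambda_{k+1}\geq n_{k+1}^{-\alpha-\petito{1}}=n_k^{-\alpha\parametre-\petito{1}}$, so $n_{k+1}^{-\alpha}\vee r\leq r\,n_k^{\petito{1}}$, and the same lower bound on $r$ lets one replace $r^{d}$ by $r^{d+\petito{1}}$ up to a factor $n_k^{\petito{1}}$. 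Collecting all factors yields $r^{d+\petito{1}}n_k^{\alpha\parametre d+\petito{1}}$, which is \eqref{gluing:eq:borne nombre fragments}. The case $d=0$ is immediate: then $F_k$ consists of singletons, every block $\bb_n$ with $n\in\widetilde{G}_{n_k}$ has $\#\bb_n\leq C$ by \eqref{gluing:property P0}, so the left-hand side is $\leq C=n_k^{\petito{1}}$ and the bound holds with the exponent of $r$ taken to be $0$.

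The main obstacle I expect is purely bookkeeping: keeping all the $\petito{1}$ corrections deterministic and uniform in $r\in\intervalleff{\Lambda_{k+1}}{\Lambda_k}$ (which they are, on $\cE$, since they descend from the deterministic estimates \eqref{gluing:eq:bounds Delta_k}, the bounds on $\mathsf{r}_n$ and $C_k$, and property \eqref{gluing:property P}), and carefully tracking the interplay of the two scales — the fragment scale $\mathsf{r}_n$ (of ambient size $\approx n_{k+1}^{-\alpha}$) and the ball radius $r$ (which runs up to $\approx n_k^{-\alpha}$) — through the rescaling by $\lambda_n$ relating $\bB_n$ and $\bb_n\subset\cT$.
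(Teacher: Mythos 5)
Your proposal follows essentially the same route as the paper's proof: reduce to a single block via Lemma~\ref{gluing:nombre de fragments} using $r\leq\Lambda_k\leq\Delta_k$, rescale by $\lambda_n^{-1}$ to apply Lemma~\ref{gluing:lem:decomposition in fragments}\ref{gluing:it:nb fragments intersectant une boule} with scales $\mathsf{r}_n$ and $\lambda_n^{-1}r$, convert everything into powers of $n_k$ using $r\geq\Lambda_{k+1}$, and treat $d=0$ separately via \eqref{gluing:property P0}. It is correct, and if anything slightly more explicit than the paper in verifying the hypotheses of the fragment lemma and in the bookkeeping of the $\petito{1}$ exponents.
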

Note that the the bounds $\Lambda_k$ of the interval on which we consider $r$ are random, but the upper bound given by \eqref{gluing:eq:borne nombre fragments} is deterministic.
\begin{proof} 
	For $ r\in \intervalleff{\Lambda_{k+1}}{\Lambda_k},$ we have $r \leq \Lambda_k \leq \Delta_k$ so using Lemma~\ref{gluing:nombre de fragments}, with $x_k= \left[x\right]_{2n_k}$ and $n$ such that $x_k\in \bb_n$, we know that \eqref{gluing:inclusion fragments} holds.
	In the case $d>0$, the fragments of $F_k$ that come from $\bb_n$ were constructed as fragments of $\bB_n$ of size $\mathsf{r}_n:=\lambda_n^{-1}n_{k+1}^{-\alpha}$.
	Recall that we denote $F(\bB_n,\mathsf{r}_n)$ the set of these fragments, seen as subsets of $\bB_n$, and denote $\mathsf{x_k}$ the point of $\bB_n$ corresponding to $x_k\in\bb_n$. The analogue of the ball $\Ball(x_k,r)$ in $\bB_n$ is then the ball of centre $\mathsf{x_k}$ and radius $\mathsf{r'}_n:=\lambda_n^{-1} r$.
	From our definition of $\Lambda_k$ we have 
	\[\mathsf{r'}_n= \lambda_n^{-1} r\leq \lambda_n^{-1}\frac{\min_{n\in G_{n_k}}\lambda_n}{\log n_k}\leq \frac{1}{\log n_k} \tend{k\rightarrow\infty}{} 0,\] as well as $\mathsf{r}_n:=\lambda_n^{-1}n_{k+1}^{-\alpha}=n_k^{\alpha-\parametre \alpha +\petito{1}}\rightarrow 0$ when $k\rightarrow \infty$. 
	Applying Lemma~\ref{gluing:lem:decomposition in fragments}\ref{gluing:it:nb fragments intersectant une boule} yields
	\begin{align*}\#\enstq{f\in F(\bB_n,\mathsf{r}_n)}{\ f\cap \Ball (\mathsf{x_k},\mathsf{r}_n)\neq \emptyset} &\underset{\text{Lem.} \ref{gluing:lem:decomposition in fragments}\ref{gluing:it:nb fragments intersectant une boule}}{\leq} (\mathsf{r}_n\vee \mathsf{r'}_n)^{d+\psi(\mathsf{r}_n\vee \mathsf{r'}_n)}\cdot \mathsf{r}_n^{-d+\phi(\mathsf{r}_n)}\\
	&\leq \left((\lambda_n^{-1}n_{k+1}^{-\alpha})\vee (\lambda_n^{-1} r)\right)^{d+\petito{1}}\cdot (\lambda_n^{-1}n_{k+1}^{-\alpha})^{-d+\petito{1}}\\
	&\leq r^{d+\petito{1}} n_{k}^{\alpha \parametre d +\petito{1}},
	\end{align*}
	and the last quantity is deterministic. Since any fragment in $\enstq{f\in F_k}{f\cap \Ball (x,r)\neq\emptyset}$ corresponds to a fragment in $\enstq{f\in F(\bB_n,\mathsf{r}_n)}{\ f\cap \Ball (\mathsf{x_k},\mathsf{r}_n)\neq \emptyset}$, the cardinal of $\enstq{f\in F_k}{ f\cap \Ball (x,r)\neq\emptyset}$ is almost surely bounded above by the last display, which proves that \eqref{gluing:eq:borne nombre fragments} holds whenever $d>0$. 
	In the case $d=0$, from our definition of fragments and the property \eqref{gluing:property P0}, we easily have
	\[\#\enstq{f\in F_k}{ f\cap \Ball (x_k,r)\cap \bb_n\neq \emptyset}\leq C\leq r^{d+\petito{1}} n_{k}^{\alpha \parametre d +\petito{1}},\]
	and so  \eqref{gluing:eq:borne nombre fragments} also holds whenever $d=0$.
\end{proof}
We can compute:
\begin{align*}
\pi(\Ball (x,r))&\underset{\eqref{gluing:majoration pi k}}{\leq}  \#\{f\in F_k, \ f\cap \Ball (x,r)\neq\emptyset\} \cdot \frac{c_2}{c_1}\cdot \frac{1}{\abs{\cB_k}} \cdot \max_{f\in F_k}\abs{f}\\
&\underset{\eqref{gluing:eq:borne nombre fragments},\eqref{gluing:controle masse Bk},\eqref{gluing:max poids fragment}}{\leq}  (r^{d+\petito{1}} n_{k}^{\alpha \parametre d +\petito{1}} )\cdot r^{\petito{1}} \cdot (n_k^{\frac{\parametre(\beta-1)}{(\parametre-1)}+\petito{1}})\cdot (n_{k}^{-\alpha \parametre d +\alpha d-\beta+\petito{1}})\\
&\leq r^{d+\petito{1}} \cdot n_k^{\frac{1}{(\parametre-1)}(\parametre\alpha d -\alpha d+\beta-\parametre+\petito{1})}\\
&\leq  r^{d-\frac{1}{\alpha\parametre(\parametre-1)}(\parametre\alpha d -\alpha d+\beta-\parametre)+\petito{1}}.
\end{align*}
In the last line we used that $r>\Lambda_{k+1}\geq n_{k+1}^{-\alpha+\petito{1}}=n_k^{-\parametre \alpha+\petito{1}}$ and so $n_k>r^{-\frac{1}{\alpha\parametre}+\petito{1}}$ and the fact that $\parametre\alpha d -\alpha d+\beta-\parametre<0$ because $\parametre>\frac{\beta-\alpha d}{1-\alpha d}$.
Let us now maximise the quantity $d-\frac{\parametre\alpha d -\alpha d+\beta-\parametre}{\alpha\parametre(\parametre-1)}$ for $\parametre\in\intervalleoo{\frac{\beta-\alpha d}{1-\alpha d}}{+\infty}$.
It is an easy exercise to see that the maximum is attained at $\bar{\parametre}=\frac{\beta-\alpha d+\sqrt{(\beta-1)(\beta-\alpha d)}}{1-\alpha d}$, with value \[\frac{2\beta-1-2\sqrt{(\beta-1)(\beta-\alpha d)}}{\alpha}.\]
If we fix $\epsilon$ small enough then, the value of $\parametre$ that maximises the last display satisfies $\parametre>\frac{\beta-\alpha d}{1-\alpha d-\epsilon}$ and so, using this value to construct $\pi$, we get that on the event $\cE$, for all $x\in\bigcap_{k\geq 1}\cT(\cB_k)$, 
\[\pi(\Ball (x,r))\leq r^{\frac{2\beta-1-2\sqrt{(\beta-1)(\beta-\alpha d)}}{\alpha}+\petito{1}},\]
which allows us to conclude using Lemma~\ref{gluing:frostman lemma}.
\paragraph{Case $\beta>1$ and $\alpha>1/d$}
Here we suppose that $d>0$. We fix $\eta<\frac{1}{d}$ which we suppose to be very close to $\frac{1}{d}$ and a small $\epsilon>0$ and use the construction of Section~\ref{gluing:construction of fragments} with these values, which satisfy \eqref{gluing:eq:parametre dans l'ordre} if we take $\parametre>\max\left(\frac{\beta-\alpha d}{1-\eta d-\epsilon},\frac{\alpha}{\eta}\right)$.

$\bullet$ For $r\in\intervalleff{\Lambda_{k+1}}{n_{k+1}^{-\eta}}$, we apply Lemma~\ref{gluing:lem:borne nombre fragments} to get 
\[\#\{f\in F_k, \ f\cap \Ball (x,r)\neq\emptyset\}\leq n_k^{\petito{1}}.\]
Now we can use the upper-bound \eqref{gluing:majoration pi k}, replacing term by term
\begin{align*}
\pi(\Ball (x,r))&\underset{\eqref{gluing:majoration pi k}}{\leq}  \#\{f\in F_k, \ f\cap \Ball (x,r)\neq\emptyset\} \cdot \frac{c_2}{c_1} \frac{1}{\abs{\cB_k}}\max_{f\in  F_k}\abs{f}.\\
&\underset{\eqref{gluing:controle masse Bk},\eqref{gluing:max poids fragment}}{\leq} n_k^{\petito{1}}\cdot \frac{c_2}{c_1} n_k^{\frac{\parametre(\beta-1)}{(\parametre-1)}+\petito{1}} n_{k}^{-\eta d \parametre+\alpha d-\beta+\petito{1}}\\
&\leq n_k^{\frac{\parametre(\beta-1)}{(\parametre-1)}-\parametre\eta d+\alpha d-\beta+\petito{1}}.
\end{align*}
Hence, using the fact that $r>\Lambda_{k+1}=n_k^{-\parametre \alpha+\petito{1}}$ and that the exponent in the last display is negative, we get 
\begin{align*}
\pi(\Ball (x,r))&\leq r^{-\frac{1}{\parametre\alpha}\cdot(\frac{\parametre(\beta-1)}{(\parametre-1)}-\parametre\eta d +\alpha d-\beta)+\petito{1}}\\
&\leq r^{\frac{\eta d}{\alpha}-\frac{1}{\alpha\parametre}(\frac{\parametre(\beta-1)}{(\parametre-1)}+\alpha d-\beta)+\petito{1}}.
\end{align*}
$\bullet$ For $r\in\intervalleff{n_{k+1}^{-\eta}}{\Lambda_k}$, we have once again using Lemma~\ref{gluing:lem:borne nombre fragments}, 
\[\#\{f\in F_k, \ f\cap \Ball (x,r)\neq\emptyset\}\leq r^{d+\petito{1}} n_{k+1}^{\eta d +\petito{1}}.\]
Replacing in \eqref{gluing:majoration pi k} yields
\begin{align*}
\pi(\Ball (x,r))&\leq  r^{d+\petito{1}} n_{k+1}^{\eta d +\petito{1}} \cdot \frac{c_2}{c_1} \frac{1}{n_k^{\frac{\parametre(1-\beta)}{(\parametre-1)}+\petito{1}}} n_{k}^{-\eta d \parametre+\alpha d-\beta+\petito{1}}\\
&\leq  r^{d+\petito{1}} \cdot n_k^{\alpha d-1+\frac{\beta-1}{\parametre-1}+\petito{1}}.
\end{align*}
Since $r\leq \Lambda_k \leq n_k^{-\alpha+\petito{1}}$, we have $n_k \leq r^{-\frac{1}{\alpha}+\petito{1}}$. Since the quantity $\alpha d-1+\frac{\beta-1}{\parametre-1}$ is positive, we can write 
\begin{align*}
\pi(\Ball (x,r))&\leq r^{d+\petito{1}} \cdot r^{-\frac{1}{\alpha}(\alpha d-1+\frac{\beta-1}{\parametre-1})+\petito{1}}\\
&\leq r^{d-\frac{\alpha d-1}{\alpha}-\frac{\beta-1}{\alpha(\parametre-1)}+\petito{1}}\\
&\leq r^{\frac{1}{\alpha}-\frac{\beta-1}{\alpha(\parametre-1)}+\petito{1}}.
\end{align*}
Now the result is obtained by taking $\epsilon\rightarrow 0$ and $\eta \rightarrow \frac{1}{d}$ and $\parametre\rightarrow\infty$.

To conclude the proof of Proposition~\ref{gluing:prop:minoration dimension hausdorff}, we have to prove that in the case $\alpha=1/d$, the dimension of $\cL$ is bounded below by $d$. To that end, we use the monotonicity of the Hausdorff dimension of $\cL$, with respect to the scaling factors $(\lambda_n)$, proved in Section~\ref{gluing:monotonicity of Hausdorff dimension}.
Suppose the sequences $(\lambda_n)$ and $(w_n)$ satisfy Hypothesis~\ref{gluing:condition beta grand} for $\alpha=1/d$. If for some $\epsilon>0$, we set for all $n\geq1$,  $\lambda'_n=n^{-\epsilon}\lambda_n$, then the sequences $(\lambda'_n)$ and $(w_n)$ satisfy Hypothesis~\hyperref[condition beta grand]{$\diamond_{\alpha+\epsilon,\beta}$}. Now for $n\geq 1$, we have $\lambda_n\geq \lambda'_n$, and  $\cT$ is compact with probability $1$ from Proposition~\ref{gluing:prop:compacité+majoration dimension}. Hence \eqref{gluing:monotonicity} holds and so we have, a.s.\
\[\dim(\cL)\geq \frac{1}{\alpha+\epsilon}.\]
In the end, $\dim(\cL)\geq d$.

\begin{proof}[Proof of Theorem~\ref{gluing:theoreme principal}]
	Use Proposition~\ref{gluing:prop:compacité+majoration dimension}, Proposition~\ref{gluing:dimension} for the upper-bounds and Proposition~\ref{gluing:prop:minoration beta petit} and Proposition~\ref{gluing:prop:minoration dimension hausdorff} for the lower-bounds.
\end{proof}

\appendix
	\section{Appendix}
	\subsection{Lifting to the Urysohn space}\label{gluing:app:embedded construction}
	In this section, we prove that it is always possible to work with random measured metric spaces that are embedded in the Urysohn space.
	Let us first recall the definition of the Gromov-Hausdorff-Prokhorov distance. If $(X,d)$ is a metric space, and $A\subset X$ then we denote
	$A^{(\epsilon)}:=\enstq{x\in X}{d(x,A)<\epsilon},$
	the $\epsilon$-fattening of $A$. Then $\mathrm{d_H}$ the Hausdorff distance on the set of non-empty compact subsets of $X$, is defined as \[\mathrm{d_H}(K,K'):=\inf\enstq{\epsilon>0}{K\subset (K')^\epsilon, \ K'\subset (K)^\epsilon}.\] Also we denote the so-called Lévy-Prokhorov distance on the Borel probability measures by
	\[\mathrm{d_{LP}}(\nu,\nu'):=\inf\enstq{\epsilon>0}{\forall A \in \cB(X), \ \nu(A)\leq \nu'\left((A)^\epsilon\right)+\epsilon \ \text{and} \ \nu'(A)\leq \nu\left((A)^\epsilon\right)+\epsilon},\]
	where $\cB(X)$ is the set of Borel sets of $X$.
	Now let $(X,\dist,\rho,\nu)$ and $(X',\dist',\rho',\nu')$ be two compact, rooted, metric spaces endowed with a probability measure. Their Gromov-Hausdorff-Prokhorov distance is defined as
	\[\mathrm{d_{GHP}}\left((X,\dist,\rho,\nu),(X',\dist',\rho',\nu')\right):= \inf_{E,\phi, \phi'}\max(\dist(\rho,\rho'),\mathrm{d_H}(\phi(X),\phi(X')),\mathrm{d_{LP}}(\phi_* \nu, \phi'_* \nu')),\]
	where the infimum is taken over all Polish spaces $(E,\delta)$ and all isometric embeddings $\phi: \ X\rightarrow E$ and $\phi': \ X'\rightarrow E$, of respectively $X$ and $X'$ into $E$. The notation $\phi_*\nu$ denotes the push-forward of the measure $\mu$ through the map $\phi$. As it is, this is only a pseudo-distance and becomes a distance on the set $\K$ of GHP-isometry (root and measure preserving isometry) classes of compact, rooted, metric spaces endowed with a probability measure, which from \cite[Theorem~2.5]{abraham_note_2013}, is a Polish space. We consider all our blocks as (possibly random) elements of the set $\K$. 
	
	We would like to see all the blocks as compact subsets of the same space. To that end, we consider $(U,\delta)$ the Urysohn space, and fix a point $u_0\in U$. The space $U$ is defined as the only Polish metric space (up to isometry) which has the following extension property (see \cite{husek_urysohn_2008} for constructions and basic properties of $U$): given any finite metric space $X$, and any point $x\in X$, any isometry from $X\setminus\{x\}$ to $U$ can be extended to an isometry from $X$ to $U$.
	This property ensures in particular that any separable metric space can be isometrically embedded into $U$. In what follows we will use the fact that if $(K,\dist,\rho)$ is a rooted compact metric space, there exists an isometric embedding of $K$ to $U$ such that $\rho$ is mapped to $u_0$. We set
	\[\K(U):=\enstq{(K,\nu)}{K\subset U, \ K \text{ compact}, u_0\in K, \ \nu \  \text{is a Borel measure and }\supp(\nu)\subset K},\]
	where $\supp(\nu)$ denotes the topological support of $\nu$. We endow $\K(U)$ with the "Hausdorff-Prokhorov" distance
	\[\mathrm{d_{HP}}((K,\nu),(K',\nu'))=\max\left(\mathrm{d_H}(K,K'),\mathrm{d_{LP}}(\nu,\nu')\right).\]
	It is easy to see that $\left(\K(U),\mathrm{d_{HP}}\right)$ is a Polish space. Now, we have a map $f:\K(U)\rightarrow\K$, which maps every $(K,\nu)$ to the isometry class of $(K,\delta_{\vert_K},u_0,\nu)$ in $\K$. This map is continuous and hence measurable. The properties of $U$ ensure that $f$ is surjective. Using a theorem of measure theory from \cite{lubin_extensions_1974}, every probability distribution $\tau$ on $\K$ can be lifted to a probability measure $\sigma$ on $\K(U)$, such that $f_*\sigma=\tau$. 
	Hence, for all $n\geq 1$, we can have a version of $(\bb_n,\bd_n,\brho_n,\bnu_n)=(\bB_n,\lambda_n\cdot \bD_n,\bRho_n,w_n\cdot \bNu_n)$ that is embedded in the space $U$.
	\subsection{Hausdorff dimension}\label{gluing:subsec:hausdorff dimension}
	
	We recall some notations and definitions that are in relation with Hausdorff dimension and that we use throughout the paper. Let $(X,\dist)$ be a metric space and $\delta>0$. We say that the family $(O_i)_{i\in I}$ of subsets of $X$ is a $\delta$-cover of $X$ if it is a covering of $X$, and the set $I$ is at most countable and for all $i\in I$, the set $O_i$ is such that its diameter satisfies $\diam(O_i)<\delta$. We set
	\[\cH_s^\delta(X):=\inf\enstq{\sum_{i\in I}\diam(O_i)^s}{(O_i)_{i\in I} \text{ is a } \delta\text{-cover of }X},\]
	As this quantity increases when $\delta$ decreases to $0$, we define its limit
	\[\cH_s(X):=\lim_{\delta\rightarrow 0}\cH_s^\delta(X)\in \intervalleff{0}{\infty},\]
	the \emph{$s$-dimensional Hausdorff measure} of $X$. Now the Hausdorff dimension of $X$ is defined as 
	\[\mathrm{dim_H}(X):=\inf\enstq{s>0}{\cH_s(X)=0}=\sup\enstq{s>0}{\cH_s(X)=\infty}.\]
	We refer to \cite{falconer_fractal_2014} for details.
	A useful tool for deriving lower-bounds on the Hausdorff dimension of a metric space is the so called Frostman's lemma. In this paper we use the following version.
	\begin{lemma}[Frostman's lemma]\label{gluing:frostman lemma}
		Let $(X,\dist)$ be a metric space. If there exists a non-zero finite Borel measure $\mu$ on $X$ and $s>0$ such that for $\mu$-almost every $x\in X$, we have
		\[\mu\left(\Ball (x,r)\right)\underset{r \rightarrow 0}{\leq} r^{s+\petito{1}},\]
		then \[\mathrm{dim_H}(X)\geq s.\]
	\end{lemma}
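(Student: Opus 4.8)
The plan is to deduce the bound from the classical \emph{mass distribution principle}. Since $\mathrm{dim_H}(X)=\inf\enstq{u>0}{\cH_u(X)=0}$, it is enough to prove that $\cH_t(X)>0$ for every $t\in\intervalleoo{0}{s}$; letting $t\uparrow s$ then gives $\mathrm{dim_H}(X)\ge s$. The subtlety, compared to the textbook statement, is twofold: the inequality $\mu(\Ball(x,r))\le r^{s+\petito{1}}$ is only assumed for $\mu$-almost every $x$, and the error term $\petito{1}$ is allowed to depend on $x$. So rather than working with $\mu$ directly I would first extract a ``good'' Borel set on which the ball-growth estimate is uniform, and apply the principle to the restriction of $\mu$ to that set.

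Concretely, fix $t\in\intervalleoo{0}{s}$. For $\mu$-a.e.\ $x$ the hypothesis yields, since $t<s$, a threshold $\rho(x)>0$ with $\mu(\Ball(x,r))\le r^{t}$ for all $r<\rho(x)$. I would stratify by the threshold: set
\[A_m:=\enstq{x\in X}{\mu(\Ball(x,r))\le r^{t}\text{ for all }r\in\intervalleoo{0}{1/m}},\qquad m\ge 1.\]
Each $A_m$ is Borel, using that $x\mapsto\mu(\Ball(x,r))$ is lower semicontinuous for fixed $r$ (Fatou's lemma on indicator functions of open balls) and that one may restrict the defining condition to rational $r$, because $\mu(\Ball(x,r))\le\inf_{r'>r}\mu(\Ball(x,r'))\le\inf_{r'>r}(r')^{t}=r^{t}$ for $t>0$. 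The $A_m$ increase with union of full $\mu$-measure, so some $A:=A_{m_0}$ has $\mu(A)>0$; write $\rho_0:=1/m_0$. Now take any $\delta<\rho_0/2$ and any $\delta$-cover $(O_i)_{i\in I}$ of $X$ (hence of $A$); keeping only the $i$ with $O_i\cap A\neq\emptyset$ and choosing $x_i\in O_i\cap A$, one has $O_i\subseteq\Ball(x_i,2\diam(O_i))$ with $2\diam(O_i)<\rho_0$, whence $\mu(A\cap O_i)\le\mu(\Ball(x_i,2\diam(O_i)))\le 2^{t}\diam(O_i)^{t}$. Summing,
\[0<\mu(A)\le\sum_{i\in I}\mu(A\cap O_i)\le 2^{t}\sum_{i\in I}\diam(O_i)^{t},\]
so $\cH_t^\delta(X)\ge 2^{-t}\mu(A)$ for all small $\delta$, and therefore $\cH_t(X)\ge 2^{-t}\mu(A)>0$.

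The only genuinely delicate points are bookkeeping: verifying that $A_m$ is measurable (the lower-semicontinuity argument plus reduction to rational radii) and, more conceptually, handling the point-dependence of the error term — this is exactly what forces the stratification into the sets $A_m$ and the passage to a subset of positive measure, instead of a uniform estimate on all of $X$. The covering step itself is routine, and no completeness, separability or compactness of $X$ is required.
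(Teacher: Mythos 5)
The paper does not actually prove this lemma: it is stated in the appendix as a standard tool, with a pointer to Falconer's book, so there is no in-paper argument to compare with. Your proof is correct and is the standard mass distribution principle; the stratification into the sets $A_m$ is precisely the right device to absorb both the $\mu$-a.e.\ qualifier and the fact that the $\petito{1}$ may depend on $x$, and your measurability argument (lower semicontinuity of $x\mapsto\mu(\Ball(x,r))$ for fixed $r$, then reduction to rational radii using monotonicity of $r\mapsto\mu(\Ball(x,r))$) is sound. One cosmetic point: if $\diam(O_i)=0$ the inclusion $O_i\subseteq\Ball\left(x_i,2\diam(O_i)\right)$ fails; but any $x_i\in A$ satisfies $\mu(\{x_i\})\leq r^{t}$ for all $r<\rho_0$, hence $\mu(\{x_i\})=0$, so the bound $\mu(A\cap O_i)\leq 2^{t}\diam(O_i)^{t}$ still holds in that degenerate case (alternatively, enlarge the radius to $2\diam(O_i)+\varepsilon$ and let $\varepsilon\to0$). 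With that one-line remark added, the argument is complete, and as you note it needs no completeness or separability of $X$.
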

	
	\subsection{Decomposition into \emph{fragments}}\label{gluing:subsec:decomposition in fragments}
	In this section, we prove Lemma~\ref{gluing:lem:decomposition in fragments}. We first construct our fragments in a deterministic setting and then show how we can apply this to random blocks.
	\paragraph{Decomposition of a deterministic block}
	Let $(\bb,\bd,\brho,\bnu)$ be a (deterministic) pointed compact metric space endowed with a Borel probability measure. We are interested in how we can decompose $\bb$ into a partition of subsets that all have approximately the same diameter $r$. 
	For $r>0$, we set 
	
	\[\cP_r(\bb):=\enstq{\{x_1,x_2, \dots, x_n\}\subset \bb}{ n\geq1 \text{ and }\ \forall i \neq j, \ \bd(x_i,x_j)\geq \frac{r}{2}}.
	\]
	It is easy to verify that we can find $p= \{x_1,x_2,\dots,x_n\}\in\cP_r(\bb)$ such that $\bb\subset\bigcup_{i=1}^n \Ball (x_i,r)$ and the balls $\left(\Ball (x_i,\frac{r}{4})\right)_{1\leq i\leq n}$ are disjoint. Indeed, any $\frac{r}{2}$-net of $\bb$ belongs to $\cP_r(\bb)$ (they are the maximal elements of $\cP_r(\bb)$ for the order relation of inclusion). 
	We denote $\cP_r^*(\bb)$ the set 
	\[\cP_r^*(\bb):=\enstq{\{x_1,x_2, \dots, x_n\}\in\cP_r(\bb)}{\bb\subset\bigcup_{i=1}^n \Ball (x_i,r)},\]
	which is non-empty from what precedes. Considering the collection of balls of radius $r$ with centres in $p\in\cP_r^*(\bb)$ gives rise to a covering of $\bb$ which is close to optimal in a sense specified by the following lemma. We recall the notation $N_r(\bb)$ which denotes the minimal number of balls of radius $r$ needed to cover $\bb$.
	\begin{lemma}\label{gluing:majoration M_r}
		For any $p= \{x_1,x_2,\dots,x_n\}\in\cP_r^*(\bb)$, we have
		$n\leq N_{r/4}(\bb)$.
	\end{lemma}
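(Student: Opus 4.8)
The plan is to exploit the disjointness of the small balls $\bigl(\Ball(x_i, r/4)\bigr)_{1 \leq i \leq n}$ together with the fact that any covering of $\bb$ by balls of radius $r/4$ must use at least one ball per point $x_i$, because two distinct $x_i, x_j$ cannot lie in a common ball of radius $r/4$.

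First I would recall, from the discussion just above the lemma statement, that for $p = \{x_1, \dots, x_n\} \in \cP_r^*(\bb)$ the balls $\Ball(x_i, r/4)$ are pairwise disjoint; equivalently $\bd(x_i, x_j) \geq r/2$ for $i \neq j$. Next, let $\Ball(y_1, r/4), \dots, \Ball(y_m, r/4)$ be an optimal covering of $\bb$ by balls of radius $r/4$, so $m = N_{r/4}(\bb)$. Since each $x_i \in \bb$, each $x_i$ belongs to at least one $\Ball(y_{k(i)}, r/4)$; fix such a choice of $k(i)$ for each $i$. The key step is to check that $i \mapsto k(i)$ is injective: if $k(i) = k(j) = k$ with $i \neq j$, then $x_i, x_j \in \Ball(y_k, r/4)$, so by the triangle inequality $\bd(x_i, x_j) < r/4 + r/4 = r/2$, contradicting $\bd(x_i, x_j) \geq r/2$. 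Injectivity of $k$ gives $n \leq m = N_{r/4}(\bb)$, which is the claim.

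There is no real obstacle here — this is the standard packing-vs-covering comparison, and the only thing to be careful about is the constant: with separation $r/2$ among the $x_i$ one needs radius strictly less than $r/4$ for the covering balls so that the triangle inequality yields a strict inequality $\bd(x_i, x_j) < r/2$. Since $N_{r/4}(\bb)$ is defined with closed or open balls of radius exactly $r/4$, I would note that if balls are taken closed the argument gives $\bd(x_i,x_j) \le r/2$ which is not quite a contradiction; in that case one instead observes $\bd(x_i, x_j) \ge r/2$ with the separation in $\cP_r(\bb)$ being $\ge r/2$, and either the definition of $\cP_r$ uses strict inequality somewhere or one uses that the covering radius can be taken slightly below $r/4$ by compactness. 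I would phrase the proof with open balls so that $\bd(x_i, x_j) < r/2$ is a genuine contradiction, matching the convention $\Ball(x,r)$ denotes the open ball used elsewhere in the paper. The whole proof is three or four lines.

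\begin{proof}
Let $p = \{x_1, \dots, x_n\} \in \cP_r^*(\bb)$, so that $\bd(x_i, x_j) \geq r/2$ for all $i \neq j$. Let $m := N_{r/4}(\bb)$ and let $\Ball(y_1, r/4), \dots, \Ball(y_m, r/4)$ be a covering of $\bb$ by $m$ balls of radius $r/4$. For each $1 \leq i \leq n$, the point $x_i$ belongs to $\bb$, hence to some ball of the covering; choose an index $k(i) \in \{1, \dots, m\}$ with $x_i \in \Ball(y_{k(i)}, r/4)$. If $k(i) = k(j)$ for some $i \neq j$, then $x_i$ and $x_j$ both lie in $\Ball(y_{k(i)}, r/4)$, so by the triangle inequality $\bd(x_i, x_j) < r/4 + r/4 = r/2$, contradicting $\bd(x_i, x_j) \geq r/2$. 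Therefore the map $i \mapsto k(i)$ is injective, which gives $n \leq m = N_{r/4}(\bb)$.
\end{proof}
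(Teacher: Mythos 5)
Your proof is correct and is essentially the same argument as the paper's: the paper notes that each of the pairwise disjoint balls $\Ball(x_i,r/4)$ must contain a centre of any covering by balls of radius $r/4$, which is the dual phrasing of your injectivity of $i\mapsto k(i)$ via the triangle inequality and the separation $\bd(x_i,x_j)\geq r/2$. Your side remark on open versus closed balls is moot here, since the paper works with open balls throughout.
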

	\begin{proof}
		Let $p= \{x_1,x_2,\dots,x_n\}\in\cP_r^*(\bb)$. Remark that, for any set $S$ such that the union of the balls $\left(\Ball(s,\frac{r}{4})\right)_{s\in S}$ covers $\bb$, each of the balls $\Ball(x_i,r/4)$, for $1\leq i\leq n$, contains at least a point of $S$. Since those balls are disjoint, the cardinality of $S$ is at least $n$.
	\end{proof}
	
	From any element $p= \{x_1,x_2,\dots,x_n\}\in\cP_r^*(\bb)$, we can then construct a partition of $\bb$, into subsets $(f_i)_{1\leq i \leq n}$ that we call fragments, and such that \[\forall i\in\intervalleentier{1}{n}, \quad \Ball \left(x_i,\frac{r}{4}\right)\subset f_i\subset \Ball (x_i,r).\]
	We define the $(f_i)$ recursively as follows,
	\[\left\lbrace\begin{aligned}
	f_1&:=\enstq{x\in\bb}{\bd(x,x_1)=\min_{1\leq i\leq n} \bd(x,x_i)}\\
	f_{k+1}&:= \enstq{x\in\left(\bb\setminus \bigcup_{i=1}^k f_i\right)}{\bd(x,x_{k+1})=\min_{1\leq i\leq n} \bd(x,x_i)}.\\
	\end{aligned}\right. \]
	If we suppose that $\bb$ satisfies the condition \eqref{gluing:controle boules}, and that $r< r_0$ then, for $p= \{x_1,x_2,\dots,x_n\}\in\cP_r^*(\bb)$, we get
	\[n \left(\frac{r}{4}\right)^{d+\varphi(r)}\leq \sum_{i=1}^n \bnu\left(\Ball\left(x_i,\frac{r}{4}\right)\right)\leq \bnu(\bb)=1,\]
	so that we have 
	\begin{equation}\label{gluing:nombre fragments r0}
	n\leq \left(\frac{r}{4}\right)^{-d-\varphi(r/4)},
	\end{equation} 
	and also, for all $i\in\intervalleentier{1}{n}$,
	\begin{equation}\label{gluing:eq:diam et masse des fragments}
	\diam f_i \leq 2r \qquad \text{and} \qquad \left(\frac{r}{4}\right)^{d+\varphi(r/4)} \leq \bnu(f_i) \leq r^{d-\varphi(r)}.
	\end{equation}
	Let us state another lemma.
	
	\begin{lemma}\label{gluing:boules et fragments}
		Let $r_0<1$. Under the condition \eqref{gluing:controle boules}, there exists two functions $\psi$ and $\phi$ defined on $\intervalleff{0}{r_0/3}$, which tend to $0$ at $0$ such that for all $ r\in\intervalleoo{0}{r_0/3}$, for all $p=\{x_1,x_2,\dots,x_n\}\in \cP_r^*(\bb)$ and fragments $(f_i)$ constructed as above, we have 
		\[ \forall x\in\bb, \forall r'\in \intervalleoo{0}{r_1}, \quad \#\enstq{1\leq i \leq n}{\Ball (x,r')\cap f_i\neq\emptyset} \leq (r\vee r')^{d+\psi(r\vee r')}\cdot r^{-d+\phi(r)}.\]
	\end{lemma}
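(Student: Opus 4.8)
The plan is to bound the number of fragments $f_i$ that meet a ball $\Ball(x,r')$ by counting the balls $\Ball(x_i,r/4)$, which are disjoint and contained in a slightly larger ball, and then using the volume estimate \eqref{gluing:controle boules} to convert this into a cardinality bound. First I would observe that if $f_i\cap\Ball(x,r')\neq\emptyset$, then since $f_i\subset\Ball(x_i,r)$, the triangle inequality gives $\bd(x_i,x)< r+r'$, hence $\Ball(x_i,r/4)\subset\Ball(x,r+r'+r/4)\subset\Ball(x,2(r\vee r')+r')\subset\Ball(x,3(r\vee r'))$. Since $3(r\vee r')<r_0$ by hypothesis (as $r,r'<r_0/3$), I can apply the upper bound in \eqref{gluing:controle boules} to the big ball and the lower bound to each small ball $\Ball(x_i,r/4)$.

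Concretely, write $N:=\#\{1\leq i\leq n : \Ball(x,r')\cap f_i\neq\emptyset\}$. The balls $\bigl(\Ball(x_i,r/4)\bigr)_{1\leq i\leq n}$ are pairwise disjoint (by definition of $\cP_r(\bb)$, the centres are $r/2$-separated) and each such ball with $i$ contributing to $N$ is contained in $\Ball(x,3(r\vee r'))$. Hence, using that $r/4<r_0$ and $3(r\vee r')<r_0$,
\[
N\cdot\Bigl(\tfrac{r}{4}\Bigr)^{d+\varphi(r/4)}\;\leq\;\sum_{i\,:\,f_i\cap\Ball(x,r')\neq\emptyset}\bnu\!\left(\Ball\!\left(x_i,\tfrac{r}{4}\right)\right)\;\leq\;\bnu\bigl(\Ball(x,3(r\vee r'))\bigr)\;\leq\;\bigl(3(r\vee r')\bigr)^{d-\varphi(3(r\vee r'))}.
\]
Rearranging,
\[
N\;\leq\;4^{d+\varphi(r/4)}\,3^{d-\varphi(3(r\vee r'))}\,(r\vee r')^{d-\varphi(3(r\vee r'))}\,r^{-d-\varphi(r/4)}.
\]
It then remains to absorb the bounded constants $4^{d+\varphi(r/4)}$ and $3^{d-\varphi(3(r\vee r'))}$ (which lie in a fixed compact subinterval of $(0,\infty)$ since $\varphi$ takes values in $[0,d/2]$) and the correction exponents into error functions: set $\psi(u):= -\varphi(3u)+\log_u(C)$ and $\phi(u):=-\varphi(u/4)+\log_u(C')$ for appropriate constants $C,C'$ absorbing the numerical factors, so that $\psi,\phi\to0$ as $u\to0$ because $\varphi(r)\to0$ and $\log_u(C)=\frac{\log C}{\log u}\to 0$. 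This yields exactly $N\leq (r\vee r')^{d+\psi(r\vee r')}\cdot r^{-d+\phi(r)}$, with $r_1:=r_0/3$ the threshold for $r'$.

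I do not anticipate a serious obstacle here: the argument is a standard packing/volume-comparison estimate. The only point requiring a little care is checking that all the radii involved stay below $r_0$ so that \eqref{gluing:controle boules} is applicable — for the small balls one needs $r/4<r_0$ (true since $r<r_0/3$), and for the big ball one needs $3(r\vee r')<r_0$ (true since both $r,r'<r_0/3$) — and keeping track of the fact that $\varphi$ must be evaluated at $3(r\vee r')$, which still tends to $0$, so the resulting $\psi$ is well-defined and vanishes at $0$. The statement of the lemma as written restricts $r'$ to $\intervalleoo{0}{r_1}$, and taking $r_1=r_0/3$ makes everything consistent.
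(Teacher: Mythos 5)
Your proposal is correct and follows essentially the same route as the paper: disjointness of the quarter-radius balls $\Ball(x_i,r/4)$, their inclusion in an enlarged ball around $x$ of radius comparable to $r\vee r'$ (the paper uses $\Ball(x,r'+2r)$ and then bounds $r'+2r\leq 3(r\vee r')$), the two-sided estimate \eqref{gluing:controle boules}, and absorption of the multiplicative constants into the exponents via $\log C/\log r\to 0$. The only cosmetic differences are where the constants are absorbed and your identification $r_1=r_0/3$, which matches the paper's intent.
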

	\begin{proof}
		Let $x\in\bb$. If for an $i\in\intervalleentier{1}{n}$, we have $y\in \Ball (x,r')\cap f_i\neq\emptyset$, then $\bd(x,y)<r'$ and $\bd(x_i,y)<r$ so $\bd(x,x_i)<r+r'$, and so we get that $\Ball (x_i,r)\subset \Ball (x,r'+2r)$. Then, using that $f_i\subset \Ball (x_i,r)$, 
		\[
		\left(\bigcup_{i: \ f_i\cap \Ball (x,r')\neq\emptyset}\Ball \left(x_i,\frac{r}{4}\right)\right) \subset \left(\bigcup_{i: \ f_i\cap \Ball (x,r')\neq\emptyset}f_i \right)\subset \Ball (x,r'+2r).
		\]
		We can use the condition \eqref{gluing:controle boules} to get, for all $r,\ r'\in\intervalleff{0}{r_0/3},$
		\[\#\enstq{1\leq i \leq n}{\Ball (x,r')\cap f_i\neq\emptyset}\cdot \left(\frac{r}{4}\right)^{d+\varphi(r/4)}\leq (r'+2r)^{d-\varphi(r'+2r)}.\]
		And so,
		\begin{align*}\#\enstq{1\leq i \leq n}{\Ball (x,r')\cap f_i\neq\emptyset}& \leq 4^{d+\varphi(r/4)} \frac{(r'+2r)^{d-\varphi(r'+2r)}}{r^{d+\varphi(r/4)}}\\
		&\leq 4^{d+\varphi(r/4)}\frac{(3 (r\vee r'))^{d-\varphi(r'+2r)}}{r^{d+\varphi(r/4)}}\\
		&\leq 4^{d+\varphi(r/4)} 3^{d-\varphi(r'+2r)}(r\vee r')^{d-\varphi(r'+2r)}r^{-d-\varphi(r/4)},\\
		&\leq (r\vee r')^{d-\varphi(3(r\vee r'))}r^{-d-\varphi(r/4)+\frac{\log(12^{3d/2})}{\log r}},
		\end{align*}
		which proves the lemma.
	\end{proof}
	This lemma gives us an abstract result for the existence and the properties of these decompositions in fragments. The next paragraph explains a procedure to construct one using a sequence of i.i.d. random points, on a possibly random block.
	
	\paragraph{Finding an element of $\cP_r^*(\bb)$}\label{gluing:random fragments}
	Suppose that the measure $\bnu$ charges all open sets. Remark that this is almost surely true for our random block $(\bB,\bD,\bRho,\bNu)$ because they satisfy Hypothesis~\ref{gluing:hypothese d}\ref{gluing:dimension d}. Let $(X_n)_{n\geq 1}$ be a sequence of i.i.d.\ random variables with law $\bnu$. Let us construct a random element of $\cP_r^*(\bb)$ for some fixed $r>0$. Define the set $E_n$ recursively as follows:
	\[E_1:=\{1\} \quad \text{and} \quad \left\lbrace\begin{aligned}
	E_{n+1}&:=E_n \quad \text{if} \quad  X_{n+1} \in \bigcup_{i\in E_n}\Ball\left(X_i,\frac{r}{2}\right),\\
	E_{n+1}&:= E_n\cup \{n+1\} \quad \text{otherwise}.\\
	\end{aligned}\right.\]
	We set $E_\infty=\bigcup_{n\geq 1} E_n$. Note that from the construction, $\{X_i,\ i\in E_\infty\}\in \cP_{r}(\bb)$.
	\begin{lemma}\label{gluing:lem:random element of prb}
		Almost surely, we have
		$\{X_i,\ i\in E_\infty\}\in \cP_r^*(\bb)$.
	\end{lemma}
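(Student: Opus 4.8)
The plan is to show that the random set $\{X_i,\ i\in E_\infty\}$ is almost surely not only an element of $\cP_r(\bb)$ (which is clear from the construction, since by design no two of the selected centres are within distance $r/2$) but that it is in fact \emph{covering}, i.e. $\bb\subset\bigcup_{i\in E_\infty}\Ball(X_i,r)$. The key observation is the following dichotomy for each $n$: either $X_{n+1}$ falls within distance $r/2$ of some already-selected centre, or $n+1$ gets added to $E_\infty$. In particular, for \emph{every} $n\geq 1$ we have $X_n\in\bigcup_{i\in E_\infty}\Ball(X_i,\frac{r}{2})$ — either because $X_n$ was itself selected, or because it was ``absorbed'' by a previously selected centre (note that $E_n\subset E_\infty$, so absorption by an earlier centre is a fortiori absorption by a centre in $E_\infty$). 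So the whole sequence $(X_n)_{n\geq1}$ lies in the set $A:=\bigcup_{i\in E_\infty}\Ball(X_i,\frac{r}{2})$.

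Next I would argue that $E_\infty$ is almost surely finite. Indeed, on the event $\{\#E_\infty=\infty\}$ one could extract infinitely many points of $\bb$ pairwise at distance $\geq r/2$, contradicting the total boundedness (compactness) of $\bb$; more carefully, writing $E_\infty=\{i_1<i_2<\cdots\}$, the balls $\Ball(X_{i_k},\frac r4)$ are pairwise disjoint, and if there were infinitely many of them then, since $\bnu$ charges every open ball (a consequence of Hypothesis~\ref{gluing:hypothese d}\ref{gluing:dimension d}, which is what makes the $X_n$ genuinely spread out), each would have positive $\bnu$-mass, and they are disjoint subsets of a probability space — but this alone only forces the masses to be summable, not the count to be finite. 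The cleaner route is purely metric: by compactness $\bb$ is totally bounded, so it is covered by finitely many balls of radius $r/4$; any two points at distance $\geq r/2$ lie in distinct such balls, hence $\#E_\infty$ is bounded by that finite covering number. This gives $\#E_\infty=:M<\infty$ almost surely. Consequently the selection procedure terminates: there is a (random) index $n_0$ with $E_n=E_{n_0}=E_\infty$ for all $n\geq n_0$, and for every $n>n_0$ we had $X_n\in\bigcup_{i\in E_\infty}\Ball(X_i,\frac r2)$, i.e. $A$ contains $X_n$ for all $n\geq n_0$, hence (combined with the previous paragraph) for all $n\geq 1$.

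It remains to pass from ``$A$ contains every $X_n$'' to ``$A^{(r/2)}$-type enlargement contains all of $\bb$''. Here I would use that $(X_n)_{n\geq1}$ is i.i.d.\ with law $\bnu$ and that $\bnu$ charges every non-empty open subset of $\bb$ (equivalently, $\supp\bnu=\bb$): the set $\{X_n:n\geq1\}$ is then almost surely dense in $\bb$. Fix $x\in\bb$ and $\varepsilon>0$; by density there is some $n$ with $\bd(x,X_n)<\varepsilon$, and since $X_n\in A$ there is $i\in E_\infty$ with $\bd(X_n,X_i)<\frac r2$, whence $\bd(x,X_i)<\frac r2+\varepsilon$. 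As $E_\infty$ is finite and $\varepsilon>0$ is arbitrary, letting $\varepsilon\to0$ gives $\min_{i\in E_\infty}\bd(x,X_i)\leq\frac r2<r$, so $x\in\bigcup_{i\in E_\infty}\Ball(X_i,r)$. This holds simultaneously for all $x\in\bb$ on the almost-sure event that $\{X_n\}$ is dense, which proves $\{X_i:i\in E_\infty\}\in\cP_r^*(\bb)$.

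I expect the main (and really only) subtlety to be the finiteness of $E_\infty$ and the density of the sample: both are standard but need the compactness of $\bb$ and the full-support property of $\bnu$ respectively — the latter being exactly why the excerpt remarks that $\bnu$ ``charges all open sets'' is a.s.\ guaranteed by Hypothesis~\ref{gluing:hypothese d}\ref{gluing:dimension d}. Everything else is bookkeeping with the recursive definition of $E_n$, which is why the paper leaves the details to the reader.
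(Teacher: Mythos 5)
Your proof is correct and rests on the same two ingredients as the paper's: the full support of $\bnu$ guarantees (via Borel--Cantelli, phrased by you as almost-sure density of the sample) that some $X_n$ falls arbitrarily close to any given $x$, and the absorption rule of the greedy construction plus the triangle inequality then places $x$ within distance $<r$ of a selected centre, the covering statement being upgraded from a countable dense set to all of $\bb$ in both arguments. The only genuine addition is your explicit check that $E_\infty$ is finite via total boundedness of the compact block, a point the paper leaves implicit even though it is needed for $\{X_i,\ i\in E_\infty\}$ to qualify as an element of $\cP_r(\bb)$, whose members are by definition finite sets.
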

	\begin{proof}
		The fact that the balls $\left(\Ball (X_i,\frac{r}{4})\right)_{i\in E_\infty}$ are disjoint follows directly from the construction. Now let $x\in\bb$. Since $\bnu\left(\Ball\left(x,\frac{r}{4}\right)\right)>0$, by the the Borel-Cantelli lemma there exists at least one $n$ such that $X_n\in\Ball\left(x,\frac{r}{4}\right)$. If $n\in E_\infty$ then $x\in\Ball\left(X_n,\frac{r}{4}\right)$.
		Otherwise $n\notin E_\infty$, in which case there exists $k\leq n$ such that $X_n\in \Ball \left(X_k,\frac{r}{2}\right)$ and so $x\in \Ball \left(X_k,\frac{3}{4}r\right)$. In both cases
		\[\Ball\left(x,\frac{r}{4}\right)\subset\bigcup_{i\in E_\infty} \Ball (X_i,r).\]
		Since we can apply the same reasoning to every point of a countable dense sequence $(y_k)_{k\geq 1}$, the lemma is proved.
	\end{proof}
	\begin{proof}[Proof of Lemma~\ref{gluing:lem:decomposition in fragments}]
		This is just a consequence of Lemma~\ref{gluing:majoration M_r}, Lemma~\ref{gluing:boules et fragments} and Lemma~\ref{gluing:lem:random element of prb} and equations \eqref{gluing:nombre fragments r0} and  \eqref{gluing:eq:diam et masse des fragments}, which almost surely apply to the random block $(\bB,\bD,\bRho,\bNu)$. 
	\end{proof}

	\subsection{Computations}\label{gluing:subsec:computations}
	\begin{lemma}\label{gluing:lem:croissance logarithmique}
		Suppose that there exists $\gamma\geq 0$ such that for all $n\in\N$, $W_n\leq n^\gamma$. Then there exists a constant $C$ such that
		\begin{align*}
		\sum_{k=1}^{n}\frac{w_k}{W_k}\leq C\log n.
		\end{align*}
	\end{lemma}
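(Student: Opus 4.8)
The plan is to use the elementary telescoping comparison between the increments $w_k/W_k$ and the increments of $k\mapsto\log W_k$. First I would note that $w_1>0$ forces $W_k\geq W_1=w_1>0$ for every $k$, so each term $w_k/W_k$ makes sense; terms with $w_k=0$ simply vanish and cause no difficulty.

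The heart of the argument is the bound, valid for every $k\geq 2$, obtained by writing $w_k=W_k-W_{k-1}$ and using $1/t\geq 1/W_k$ on $[W_{k-1},W_k]$:
\[
\frac{w_k}{W_k}=\frac{W_k-W_{k-1}}{W_k}\leq\int_{W_{k-1}}^{W_k}\frac{\mathrm{d}t}{t}=\log W_k-\log W_{k-1}.
\]
Summing over $k$ from $2$ to $n$, the right-hand side telescopes, so that
\[
\sum_{k=2}^{n}\frac{w_k}{W_k}\leq\log W_n-\log W_1\leq\gamma\log n-\log w_1,
\]
where the last inequality uses the hypothesis $W_n\leq n^{\gamma}$. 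Adding the term $w_1/W_1=1$ coming from $k=1$ yields $\sum_{k=1}^{n}\frac{w_k}{W_k}\leq 1-\log w_1+\gamma\log n$.

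It then only remains to absorb the additive constant $1-\log w_1$ into a logarithmic term. Since the whole construction is unchanged if all the weights are multiplied by a common positive constant, one may assume $w_1\leq 1$, so that $1-\log w_1\geq 1>0$; in any case, for $n\geq 2$ we have $\log n\geq\log 2>0$, hence $1-\log w_1\leq\frac{|1-\log w_1|}{\log 2}\log n$, and the lemma follows for all $n\geq 2$ with $C:=\gamma+\frac{|1-\log w_1|}{\log 2}$ (the case $n=1$ being of no interest, since there the left-hand side is just $1$ and the estimate is only ever invoked as $n\to\infty$). I do not anticipate any genuine obstacle: the argument is a one-line integral estimate followed by a telescoping sum, and the only points deserving a word of care are the possibly vanishing weights $w_k$ — harmless, as they contribute $0$ on both sides — and the handling of the additive constant and of small $n$.
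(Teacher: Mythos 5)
Your proof is correct, and it takes a genuinely different route from the paper's. The paper first disposes of the case $\sum_k w_k<\infty$ (trivial, since then $w_k/W_k\leq w_k/w_1$ is summable) and, in the divergent case, groups the indices into blocks $\llbracket n_i,n_{i+1}-1\rrbracket$ where $n_i=\inf\{j\geq1 : W_j\geq 2^i\}$; on each block it bounds $w_k/W_k\leq w_k/2^i$ and the total weight of the block by $2^{i+1}$, giving $\sum_{k\leq n} w_k/W_k\leq 2\lceil \log W_n/\log 2\rceil+O(1)$, hence $O(\log n)$ by $W_n\leq n^\gamma$. Your telescoping comparison $w_k/W_k\leq\int_{W_{k-1}}^{W_k}t^{-1}\,\mathrm{d}t=\log W_k-\log W_{k-1}$ reaches the same conclusion in one stroke: it needs no case distinction between convergent and divergent weight series, handles vanishing $w_k$ automatically, and produces an explicit and slightly sharper constant, $C=\gamma+|1-\log w_1|/\log 2$ for $n\geq 2$, versus roughly $2\gamma/\log 2$ from the dyadic argument. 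Both proofs share the same (immaterial) caveat at $n=1$, where the right-hand side vanishes while the left-hand side equals $1$; as you observe, the lemma is only ever used asymptotically, so this does not matter.
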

	\begin{proof}
		If the series $\sum w_k$ converges then the results is trivial so let us suppose that it diverges. For $k\geq0$, we define $n_k:=\inf\enstq{i\geq 1}{W_i\geq 2^k}$ and write
		\begin{align*}
		\sum_{k=n_0}^{n}\frac{w_k}{W_k} \leq \sum_{i=0}^{\left\lceil\frac{\log W_n}{\log 2}\right\rceil} \sum_{k=n_i}^{n_{i+1}-1} \frac{w_k}{W_k}\leq \sum_{i=0}^{\left\lceil\frac{\log W_n}{\log 2}\right\rceil} \frac{1}{2^i}\sum_{k=n_i}^{n_{i+1}-1} w_k&\leq \sum_{i=0}^{\left\lceil\frac{\log W_n}{\log 2}\right\rceil} \frac{2^{i+1}}{2^i}\\
		&\leq 2 \left\lceil\frac{\log W_n}{\log 2}\right\rceil,
		\end{align*}
		which grows at most logarithmically thanks to our assumption on the sequence $(W_n)$.
	\end{proof}
	
	\begin{lemma}\label{gluing:divergence en log}
		Let $\beta<1$ and assume that $w_n\leq n^{-\beta+\petito{1}}$ and $W_n=n^{1-\beta+\petito{1}}$ and that for some $\epsilon>0$ we have:
		\[\liminf_{n\rightarrow\infty}\frac{1}{W_n}\underset{k\in G^\epsilon}{\sum_{k=1}^{n}}w_k>0.\]
		Then there exists a constant $C_\epsilon$ such that for $N$ large enough we have
		\[\underset{k\in G^\epsilon}{\sum_{k=N}^{N^{1+\epsilon}}}\frac{w_k}{W_k}\geq C_\epsilon \log N.\]
	\end{lemma}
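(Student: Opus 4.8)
The plan is to deduce the weighted logarithmic lower bound from the corresponding \emph{unweighted} estimate by a summation-by-parts argument, the only extra input being the standing hypothesis that $G^\epsilon$ carries a uniformly positive proportion of the total weight. Throughout, $\epsilon>0$ is fixed and I write $M:=\lfloor N^{1+\epsilon}\rfloor$ for the integer upper endpoint of the sum.

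First I would fix a constant $\delta>0$ strictly below $\liminf_{n\rightarrow\infty}\frac1{W_n}\sum_{k\le n,\ k\in G^\epsilon}w_k$ and set $A_n:=\sum_{k\le n,\ k\in G^\epsilon}w_k$, so that $A_n\ge\delta W_n$ for all $n$ larger than some $n_\ast$. I would also record that, from $w_n\le n^{-\beta+\petito{1}}$ and $W_n=n^{1-\beta+\petito{1}}$, one has $w_n/W_n\le n^{-1+\petito{1}}\to0$, hence in particular $W_{n-1}/W_n\ge\tfrac12$ once $n$ is large. Then I would apply summation by parts to $\sum_{k=N}^M a_k/W_k$ with $a_k:=w_k\ind{k\in G^\epsilon}$:
\[
\sum_{k=N}^{M}\frac{a_k}{W_k}
=\frac{A_M}{W_M}-\frac{A_{N-1}}{W_N}
+\sum_{k=N}^{M-1}A_k\left(\frac1{W_k}-\frac1{W_{k+1}}\right).
\]
Since $\frac1{W_k}-\frac1{W_{k+1}}=\frac{w_{k+1}}{W_kW_{k+1}}\ge0$ and $A_k\ge\delta W_k$ for $k\ge N\ge n_\ast$, the last sum is at least $\delta\sum_{k=N+1}^{M}\frac{w_k}{W_k}$; discarding $A_M/W_M\ge0$ and bounding $A_{N-1}/W_N\le W_{N-1}/W_N\le1$ then gives
\[
\sum_{k=N,\ k\in G^\epsilon}^{M}\frac{w_k}{W_k}\ \ge\ -1+\delta\sum_{k=N+1}^{M}\frac{w_k}{W_k}.
\]

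It then remains to bound the unweighted tail from below. From $\log W_k-\log W_{k-1}=\log\!\left(1+\frac{w_k}{W_{k-1}}\right)\le\frac{w_k}{W_{k-1}}=\frac{W_k}{W_{k-1}}\cdot\frac{w_k}{W_k}\le2\,\frac{w_k}{W_k}$ for $k$ large, telescoping yields $\sum_{k=N+1}^{M}\frac{w_k}{W_k}\ge\frac12\bigl(\log W_M-\log W_N\bigr)$. Finally $\log W_M=(1-\beta+\petito{1})(1+\epsilon)\log N$ and $\log W_N=(1-\beta+\petito{1})\log N$, so $\log W_M-\log W_N=\bigl((1-\beta)\epsilon+\petito{1}\bigr)\log N\ge\tfrac{(1-\beta)\epsilon}{2}\log N$ for $N$ large. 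Combining the displays, $\sum_{k=N,\ k\in G^\epsilon}^{M}\frac{w_k}{W_k}\ge-1+\tfrac{\delta(1-\beta)\epsilon}{4}\log N$, which is at least $\tfrac{\delta(1-\beta)\epsilon}{8}\log N$ once $N$ is large enough, so $C_\epsilon:=\tfrac{\delta(1-\beta)\epsilon}{8}$ works. (One could equally well avoid summation by parts and argue as in the proof of Lemma~\ref{gluing:lem:croissance logarithmique}: group the indices $k\in[N,N^{1+\epsilon}]$ into consecutive blocks on which $W_k$ is multiplied by a fixed factor $L$, chosen large in terms of $\delta$, so that on each block the bound $A_{m'-1}-A_{m-1}\ge\delta W_{m'-1}-W_{m-1}$ forces a contribution $\gtrsim1/L$, there being $\asymp\log N$ such blocks.)

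There is no genuine obstacle here: the argument is a routine combination of summation by parts with $\log(1+x)\le x$. The only point that needs care is the bookkeeping of the various ``for $N$ large'' thresholds — the one coming from the $\liminf$, the one making $w_k/W_k\le\tfrac12$, and the ones absorbing the $\petito{1}$'s in the hypotheses on $w_n$ and $W_n$ — which are plainly mutually compatible.
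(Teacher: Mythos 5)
Your proof is correct, but it follows a genuinely different route from the paper's. The paper argues by blocking: it fixes $c$ below the liminf, sets $C=3/c$, and cuts the range $\intervalleentier{N}{N^{1+\epsilon}}$ into consecutive stretches $\intervalleentier{n_i+1}{n_{i+1}}$ on which $W$ passes from about $C^i$ to about $C^{i+1}$; on each such stretch the density hypothesis forces $\sum_{k\in G^\epsilon} w_k\geq C^i(1+\petito{1})$, hence a contribution of order $1/C$ to $\sum_{k\in G^\epsilon} w_k/W_k$, and the growth $W_n=n^{1-\beta+\petito{1}}$ yields $\asymp\log N$ stretches (this is exactly the mechanism you sketch in your parenthetical remark, and it mirrors the proof of Lemma~\ref{gluing:lem:croissance logarithmique}). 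You instead use Abel summation to convert the hypothesis $A_k\geq\delta W_k$ directly into the comparison $\sum_{k\in G^\epsilon,\,N\leq k\leq M}w_k/W_k\geq-1+\delta\sum_{k=N+1}^{M}w_k/W_k$, and then bound the unrestricted sum below by $\tfrac12(\log W_M-\log W_N)\asymp(1-\beta)\epsilon\log N$ via $\log(1+x)\leq x$ and $w_k/W_k\leq k^{-1+\petito{1}}$. Your decomposition cleanly separates the two inputs (weight-density of $G^\epsilon$ versus logarithmic divergence of $\sum w_k/W_k$ over a polynomial window) and makes the constant $C_\epsilon$ explicit in terms of $\delta$, $\beta$, $\epsilon$; the paper's blocking argument keeps everything in one combinatorial step and avoids the boundary term $-1$, but both rest on the same facts ($A_n\geq\delta W_n$ eventually and $w_n/W_n\to0$), and your bookkeeping of the large-$N$ thresholds is sound.
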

	\begin{proof}
		Let $c$ be such that, for $n$ large enough $\frac{1}{W_n}\sum_{k=1}^{n}w_k\ind{k\in G^\epsilon}>c$. Let $C:=\frac{3}{c}$, note that $C>1$ because $c\leq 1$. For all $i\geq1$, we set $n_i=\inf\enstq{n}{W_n\geq C^i}$. For all $i\geq1$, we have
		$W_{n_i-1}\leq C^i\leq W_{n_i}\leq C^i + w_{n_i}$.
		We get, 
		\begin{align*}
		\sum_{k=n_i+1}^{n_{i+1}}w_k\ind{k\in G^\epsilon}\geq c W_{n_{i+1}} - W_{n_i}\geq cC^{i+1}-C^i-w_{n_i} &\geq C^i(cC-1-\frac{w_{n_i}}{C^i})\\
		&\geq C^i(1+\petito{1}).
		\end{align*}
		for $i$ tending to infinity.
		Now for $N$ a large integer, we set 
		\[I_N:= \inf\enstq{i}{n_i\geq N}= \left\lceil\frac{\log W_N}{\log C}\right\rceil \quad \text{and} \quad J_N:= \sup\enstq{i}{n_i\leq N^{1+\epsilon}}= \left\lfloor\frac{\log W_{\lfloor N^{1+\epsilon}\rfloor}}{\log C}\right\rfloor.\]
		Then we compute
		\begin{align*}
		\sum_{k=N}^{N^{1+\epsilon}}\frac{w_k}{W_k}\ind{k\in G^\epsilon}&\geq \sum_{i=I_N}^{J_N}\sum_{k=n_i+1}^{n_{i+1}}\frac{w_k}{W_k}\ind{k\in G^\epsilon}\\
		&\geq \sum_{i=I_N}^{J_N}\frac{1}{W_{n_{i+1}}}\sum_{k=n_i+1}^{n_{i+1}}w_k\ind{k\in G^\epsilon}\\
		&\geq \sum_{i=I_N}^{J_N} \frac{1}{C^{i+1}(1+\petito{1})} C^i(1+\petito{1})\\
		&\geq\frac{J_N-I_N}{C} (1+\petito{1}).
		\end{align*}
		We finish the proof by noting that, thanks to the hypothesis on the growth of $W_n$, the last display grows logarithmically in $N$.
	\end{proof}
	
	\begin{proof}[Proof of Lemma~\ref{gluing:lemme des ai}]
		From our assumptions, it is easy to see that we have $a_k=n_k^{1-\beta+\petito{1}}$. For all $k$, we write
		\[\log a_k = (1-\beta + r_k)\log n_k,\]
		with $r_k\rightarrow0$ as $k\rightarrow\infty$. We write
		\begin{align}\label{gluing:eq:sum log ai}
		\sum_{i=1}^k \log a_i &= \sum_{i=1}^k (1-\beta + r_i)\log n_i=\sum_{i=0}^{k-1} (1-\beta + r_{k-i})\log n_{k-i}.
		\end{align}
		For any $k\geq0$, from the recursive definition of the sequence $(n_k)$, we have $n_{k+1}-1 < n_k^\gamma \leq n_{k+1}$, which entails $\log n_k=\frac{1}{\gamma}\log n_{k+1}+s_{k}$, with $\abs{s_{k}}\leq 1$. Using this recursively yields
		\begin{align*}
		\abs{\log n_{k-i}-\frac{1}{\gamma^i}\log n_k}\leq \frac{\gamma}{1-\gamma}. 
		\end{align*}
		Hence using \eqref{gluing:eq:sum log ai} and the fact that $\log n_k$ grows exponentially in $k$,
		\begin{align*}
		\sum_{i=1}^k \log a_i &= \log n_k \left((1-\beta)\sum_{i=0}^{k-1}\frac{1}{\gamma^i} + \underset{\rightarrow 0}{\underbrace{\sum_{i=0}^{k} \frac{r_{k-i}}{\gamma^i}}} \right)+ \underset{=\grandO{k}}{\underbrace{\sum_{i=0}^{k-1} (1-\beta + r_{k-i}) \left(\log n_{k-i}-\frac{1}{\gamma^i}\log n_k\right)}}\\
		&=\log n_k\left(\frac{(1-\beta)\gamma}{\gamma-1}+\petito{1}\right),
		\end{align*}
		which proves the lemma.
	\end{proof}
\printbibliography
\end{document}